\newcolumntype{C}[1]{>{\centering\arraybackslash}m{#1}}
\newcolumntype{L}[1]{>{\raggedright\arraybackslash}m{#1}}
\definecolor{lime}{HTML}{A6CE39}
\DeclareRobustCommand{\orcidicon}{%
	\begin{tikzpicture}
	\draw[lime, fill=lime] (0,0) 
	circle [radius=0.16] 
	node[white] {{\fontfamily{qag}\selectfont \tiny ID}};
	\draw[white, fill=white] (-0.0625,0.095) 
	circle [radius=0.007];
	\end{tikzpicture}
	\hspace{-2mm}
}
\xdef\csname orcid\x\endcsname{\noexpand\href{https://orcid.org/\csname orcidauthor\x\endcsname}{\noexpand\orcidicon}}
\newtheorem{proposition}{Proposition}[section]
\newtheorem{lemma}[proposition]{Lemma}
\newtheorem{corollaire}[proposition]{Corollary}
\newtheorem{theorem}[proposition]{Theorem}
\newtheorem{theoremv}{Main Theorem}
\theoremstyle{definition}
\newtheorem{definition}[proposition]{Definition}
\newtheorem{example}[proposition]{Example}
\newtheorem{remarque}[proposition]{Remark}
\newcommand{\bl}{\begin{lemma}}
\newcommand{\bp}{\begin{proposition}}
\newcommand{\bt}{\begin{theorem}}
\newcommand{\bc}{\begin{corollaire}}
\newcommand{\be}{\begin{equation}}
\newcommand{\bee}{\begin{equation*}}
\newcommand{\bd}{\begin{definition}}
\newcommand{\bdp}{\begin{definitionproposition}}
\newcommand{\bex}{\begin{example}}
\newcommand{\br}{\begin{remarque}}
\newcommand{\bpr}{\begin{proof}}
\newcommand{\el}{\end{lemma}}
\newcommand{\ep}{\end{proposition}}
\newcommand{\et}{\end{theorem}}
\newcommand{\ec}{\end{corollaire}}
\newcommand{\ee}{\end{equation}}
\newcommand{\eee}{\end{equation*}}
\newcommand{\ed}{\end{definition}}
\newcommand{\edp}{\end{definitionproposition}}
\newcommand{\eex}{\end{example}}
\newcommand{\er}{\end{remarque}}
\newcommand{\epr}{\end{proof}}
\newcommand{\secref}[1]{Section~\ref{#1}}
\newcommand{\subsecref}[1]{Subsection~\ref{#1}}
\newcommand{\thmref}[1]{Theorem~\ref{#1}}
\newcommand{\propref}[1]{Proposition~\ref{#1}}
\newcommand{\lemref}[1]{Lemma~\ref{#1}}
\newcommand{\remref}[1]{Remark~\ref{#1}}
\newcommand{\exemref}[1]{Example~\ref{#1}}
\newcommand{\defref}[1]{Definition~\ref{#1}}
\def\ov{\overline}
\def\menos{\backslash}
\renewcommand{\int}[1]{{\rm int} (#1)}
\def\cT{{\mathcal T}}
\def\cF{{\mathcal F}}
\def\cL{{\mathcal L}}
\def\cS{{\mathcal S}}
\def\cB{{\mathcal B}}
\def\1{{\boldsymbol 1}}
\def\gC{{\mathfrak{C}}}
\def\tc{{\mathtt c}}
\def\tv{{\mathtt v}}
\def\tN{{\widetilde{N}}}
\def\crH{{\mathscr H}}
\def\N {\mathbb N}
\def\R {\mathbb R}
\def\Z {\mathbb Z}
\def\im{{\rm Im\,}}
\def\Hom{{\rm Hom}}
\def\codim{{\rm codim\,}}
\def\rc{{\mathring{\tc}}}
 \def\sd{{\mathrm{sd}}}
\def\sub{{\mathrm{Sub}}}
\def\PL{{\mathrm{PL}}}
\def\comp{{\mathrm{comp} \,}}
\def\singf{\mathrm{Sing}^{\cF}}
\def\simp{{\mathrm{Simp}\,}}
\def\PL{{\mathrm{PL}}}
\def\vert{{\mathrm{Vert}\,}}
\def\id{{\mathrm{id}}}
\newcommand{\tres}[3]{{#1}^{^{ #2 }}_{_{#3}}}
\newcommand{\Hiru}[3]{{#1}^{^{#2}}{( #3 )}}
\newcommand{\hiru}[3]{{#1}_{_{#2}}{( #3 )}}
\newcommand{\lau}[4]{{#1}^{^{#2}}_{_{#3}}{\left( #4 \right)}}
\newcommand{\IH}{\mathscr H}
\newcommand{\TO}{\longrightarrow}
\def\tDelta{{\widetilde{\Delta}}}
\def\ttP{{\mathtt P}}
\title{Simplicial intersection homology revisited.}
\date{\today}
\author{David Chataur\orcidC{\!}}
\address{Lamfa\\
Universit\'e de Picardie Jules Verne\\
33, rue Saint-Leu\\
80039 Amiens Cedex~1\\
         France}
\email{David.Chataur@u-picardie.fr}
\author{Martintxo Saralegi-Aranguren\orcidS{\!}
}
\address{Laboratoire de Math{\'e}matiques de Lens\\  
      EA 2462 \\
      Universit\'e d'Artois\\
          62300 Lens Cedex\\
         France}
\email{martin.saraleguiaranguren@univ-artois.fr}
\author{Daniel Tanr\'e\orcidT{\!}
}
\address{D\'epartement de Math{\'e}matiques\\
         UMR 8524 \\
         Universit\'e de Lille\\
         59655 Villeneuve d'Ascq Cedex\\
         France}
\email{Daniel.Tanre@univ-lille.fr}
\subjclass[2020]{55N33, 55N35}
\keywords{Intersection (co)homology, Simplicial, Singular}
\begin{document}

\begin{abstract} 
Intersection homology is defined  for simplicial, singular and PL chains and
 it is well known that the three versions are isomorphic for a full filtered  simplicial complex. 
 In the literature, the isomorphism, between the singular and the simplicial situations of intersection homology,
 uses the PL case as an intermediate.
 Here we show directly that the canonical map between the simplicial and the singular intersection chains complexes
 is a quasi-isomorphism. This is similar to the classical proof for simplicial complexes, with an
 argument  based on the concept of residual complex and not on skeletons.
 
 This parallel between simplicial and singular approaches is also extended to
 the intersection blown-up cohomology that we   introduced  in a previous work.
 In the case of an orientable pseudomanifold, 
 this cohomology owns a Poincaré isomorphism with the intersection homology, 
 for any coefficient ring, thanks to a cap product with a fundamental class.
 So, the blown-up intersection cohomology of a pseudomanifold can be computed 
from a triangulation.
Finally, we introduce a blown-up intersection cohomology for PL spaces and prove that it is isomorphic to the singular one.
\end{abstract} 
\maketitle

The homology of a simplicial complex, $K$,  can be computed indifferently from  the simplices of the triangulation or 
from the singular chains complex of its realization, $|K|$. 
In other words, the map between chain complexes $\hiru C *  K\to  \hiru C {*}{|K|}$ is a quasi-isomorphism.
Classically, the proof (see \cite{MR1867354}) uses an induction on the skeletons $K^{(\ell)}$
of $K$. The crucial point is the existence of   isomorphisms,
$$ \hiru C *{K^{(\ell)}  ,K^{(\ell-1)}}\cong \bigoplus_{\beta\in K, \dim \beta =\ell} \hiru C *{\beta,\partial \beta}
\text{ and }
 \hiru H *{|K^{(\ell)}|  ,|K^{(\ell-1)}|}\cong \bigoplus_{\beta\in K, \dim \beta =\ell} \hiru H *{|\beta|,|\partial \beta|}.$$

\medskip
Before developing the corresponding situation in  intersection homology, 
let us make a brief historical reminder.
In their first paper (\cite{MR572580}) on intersection homology, 
M.~Goresky and R.~MacPherson introduce it for  a pseudomanifold $X$ together with  a fixed PL structure and a parameter $\ov p$,  called  perversity.
 They
 define the complex of  $\ov{p}$-intersection, $\lau {\gC }  {\ov{p}} * X $, as a subcomplex of the complex of PL chains and
 the $\ov{p}$-intersection homology as the homology of $\lau {\gC }  {\ov{p}} * X $.
 Later, in an appendix to  \cite{MR833195},  they define a complex of
 $\ov{p}$-intersection, $\lau C  {\ov{p}} * K$, as a subcomplex of the simplicial chains of a filtered simplicial complex $K$.
 If $K$ is a full admissible triangulation of a PL space X, they  prove that the inclusion
 $\lau C  {\ov{p}} * K\to \lau {\gC }  {\ov{p}} * X $ induces an isomorphism in homology.
 They do that with a nice construction of a left inverse to this  inclusion.
 Let us specify that, without the ``full'' hypothesis on $K$, the simplicial and PL intersection homologies 
 may not be isomorphic, as an explicit example shows in \cite[Appendix]{MR833195}.
 The barycentric subdivision of a simplicial complex being full, one can always find such triangulation of $X$.

 \medskip
 A third step is to consider the topological realization, $|K|$, of $K$ and define a complex of 
$\ov{p}$-intersection, $\lau C  {\ov{p}} * {|K|}$, as a subcomplex of the singular chains on $|K|$. 
This  has been done by King (\cite{MR800845}) who proves the existence of an isomorphism between the
singular and the PL intersection homologies, in the case of CS sets. 
These relationships between the three points of view are developed in a thorough way 
by G.~Friedman in  \cite[Sections 3.3 and  5.4]{LibroGreg}.
Let us emphasize that, unlike that of the classical case, the proof is not a direct comparison 
between the simplicial and the singular points of view:
the PL case is used as intermediate between them and a structure of CS set is required.

The first objective of this work is to obtain an isomorphism between singular and simplicial
intersection homologies, in a direct way, from a chain map between the corresponding complexes,
and without any restriction to  CS sets.
Let us start with a filtered simplicial complex $K$,
that is, a simplicial complex endowed with a filtration,
$$
K = K_{n} \supset K_{n-1} \supset K_{n-2} \supset \ldots \supset 
K_{0} \supset K_{-1} = \emptyset ,  
 $$
where each $K_i$ is a subcomplex.
Unfortunately we \emph{do not} have the expected formula
\begin{equation}\label{bb}
\lau C {\ov p} *{K ^{(\ell)},K^{(\ell-1)}}\cong
\bigoplus_{ \beta \in K, \  \dim \beta =\ell} \lau C {\ov p}*{\beta,\partial \beta},
\end{equation}
as shows the example in  Subsection \ref{Break}.
To overcome this defect, we introduce for any filtered simplicial complex, $K$, a pair of integers, 
called \emph{complexity}  (see \defref{complexity}) and denoted $\comp K$.
With the lexicographic order, the complexity provides the opportunity for reasoning by induction.
As for an alternative decomposition to \eqref{bb}, we introduce a subcomplex of K, called the
 \emph{residual complex}, denoted  $\cL(K)$ and already present in \cite{MR1346255}.
 The desired decomposition comes from a particular class 
 $\cB(K)$ of simplices of $K$, called \emph{clot}, see \defref{clot}.
 By noting $L_{\beta}$ the link of a clot $\beta$, we get in \propref{SimplRel},  an isomorphism
 \begin{equation}\label{SimplRelformIntro}
\bigoplus_{\beta \in \cB(K)}   \lau C  {\ov p} *{ \beta * L_\beta,  \partial \beta * L_\beta}  
\xrightarrow{\cong}
\lau C {\ov p} *{ K,\cL(K)}.
\end{equation}
 From it, we can adapt the proof of the classical case and prove that the canonical map between
 the simplicial and the singular intersection complexes, is a quasi-isomorphism.

 \medskip
In fact, we apply this program not only to $\ov{p}$-intersection homology but also to a 
\emph{$\ov{p}$-intersection cohomology} obtained from simplicial blow up, denoted $\tres \crH*{\ov{p}}$
and called \emph{blown-up intersection cohomology.}
For stratified spaces in general, this cohomology is  naturally equipped 
 with cup and cap products for any ring coefficients and with cohomology operations
 (\cite{CST2,2020arXiv200504960C}).
  Regarding duality and pseudomanifolds,  
 since the work (\cite{MR699009}) of Goresky and Siegel, it is known that 
there is no Poincar\'e duality on intersection homology with coefficients in any commutative ring. 
However, in \cite{CST7}, we prove that the cap product with the fundamental class of an oriented, compact, 
$n$-dimensional pseudomanifold induces an isomorphism between 
$\lau \crH * {\ov{p}} X$ and the intersection homology $\lau H {\ov{p}}{n-*}X$.
(Similar versions exist for a paracompact not necessarily compact pseudomanifold, with
 compact supports in cohomology or Borel-Moore homology, \cite{CST8} and \cite{ST1}).
This blown-up cohomology coincides with the cohomology obtained from the linear dual of the chain complexes, when 
$\sf R$ is a field, 
or more generally with an hypothesis of $\sf R$-local torsion free in a part of the intersection homology of links, 
already present in \cite{MR699009}.
Let us also mention that its  sheafification is the Deligne sheaf (\cite{CST8}).
The existence of the isomorphism between simplicial and singular blown-up intersection cohomology is a new result.
In short, we prove in Theorems \ref{DKBis} and \ref{DKTris}:

\smallskip

\begin{theoremv}
Let $K$ be a full filtered complex and $\ov{p}$ be any perversity, then we have isomorphisms:
$$
\xymatrix@R=-2mm@C=1mm{
\lau H  {\ov p} * K  &\cong  &\lau H  {\ov p} * {|K|} &\ \ \  \hbox {and } \ \ \ &
\lau \IH * {\ov p} K  &\cong  &\lau \IH * {\ov p} {|K|}.\\
\hbox{\tiny simplicial} && \hbox{\tiny singular}&&\hbox{\tiny simplicial} && \hbox{\tiny singular}
}
$$
\end{theoremv}

We also define a blown-up cohomology for PL spaces and relate it to the simplicial and the singular blown-up
cohomologies in \thmref{TeoremaPL}.
This theorem contains a second part on intersection homology, with a proof of the existence of an isomorphism between
singular and PL intersection homology, without requiring the existence of a CS set structure. 
This  answers  a question asked by G. Friedman in \cite[Page 234]{LibroGreg}.

\medskip
In the sequel, all the complexes are considered 
with coefficients in an abelian group denoted $\sf R$, which is not explicitly mentioned. 
If $X$ is a topological space, we denote  
$\tc X=X\times [0,1]/X\times \{0\}$ the cone on $X$ and 
$\rc X= X\times [0,1[/X\times \{0\}$ the open cone on $X$. 
The apex of a cone is  $\tv$.

\medskip
Our program is carried out in Sections 1-6 below whose headings are self-explanatory.

\tableofcontents

\section{Filtered simplicial setting}\label{SS}

\begin{quote}
We introduce the definitions and properties necessary for the study of the intersection homology
of a filtered simplicial complex, $K$.
In particular, by allowing proofs by induction, the notions of complexity and residual complex
of $K$ will play an important role in the sequel.
\end{quote}

 A  \emph{simplicial complex} $K$ is a set of simplices
in $\R^p$, $p\leq \infty$, such that
\begin{enumerate}[1.] 
\item if $\sigma,\tau \in K$ and $\sigma \cap \tau \ne \emptyset$ then $\sigma \cap \tau$ is a face of both $\sigma$ and $\tau$,

\item if $\sigma \in K$ and $\tau$ is a face of $\sigma$, written 
$\tau \triangleleft \sigma$,
 then $\tau \in K$,

\item (local finiteness) every vertex of a simplex of $K$ belongs to a finite number of simplices of $K$.
\end{enumerate}

We say that $d \in \N$ is the dimension of $K$, denoted $\dim(K) = d$, 
if  every simplex of $K$ has dimension lower than or equal  to $d$ and $K$ has at least one simplex of dimension $d$. 
If this number does not exist, we say that  $\dim (K)=\infty$. (By convention, if $K=\emptyset$, we write
$\dim K=-\infty$.)
A subcollection $L \subset K$ is a \emph{simplicial subcomplex} of $K$ if it verifies properties 1 and 2.
The union of the simplices of $K$ whose  dimension is smaller than  or equal to a given $\ell \in \N$ is called the \emph{$\ell$-skeleton} of $K$ and denoted $K^{(\ell)}$. 
A \emph{simplicial map} from $K$ to $K'$ is a function from the set of vertices of $K$ to the set of vertices of $K'$
such that the images of the vertices of a simplex of $K$ is a simplex of $K'$.

\medskip
Let $|K|$ be the topological subspace of $\R^p$ formed by the union of the simplices of the simplicial complex $K$.
If $\Delta$ is the standard simplex, we identify $\Delta$ with $|\Delta|$.

\medskip
A \emph{filtered simplicial complex} is a   simplicial complex $K$
endowed with a filtration made up of simplicial subcomplexes,
\begin{equation*}
K = K_{n} \supset K_{n-1} \supset K_{n-2} \supset \ldots \supset 
K_{0} \supset K_{-1} = \emptyset .
\end{equation*}
The integer $k$ is the \emph{(virtual) dimension} of the filtered simplicial complex $K_{k}$, denoted $\dim_v K_k=k$.

\medskip

A \emph{stratum} of $K$ is a non-empty connected component, $S$, of a $|K_i|\menos |K_{i-1}|$. Its dimension is  $\dim_v S =i$ and its codimension is $\codim_v S =n-i$.
The family of strata of $K$ is denoted $\cS_K$  or $\cS$ if there is no ambiguity.
The strata included in $|K|\menos |K_{n-1}|$ are called \emph{regular}, the other ones being called \emph{singular.}

\medskip
We introduce a pair of integers mixing geometrical and virtual dimensions.

\begin{definition}\label{complexity}
The \emph{complexity} of a filtered simplicial complex $K\ne \emptyset$ is  the pair $\comp (K) = (a,b)$ where:
\begin{itemize}
\item[-] $a = \max \{k \in \{0,\ldots,n\} \mid K_{n-k}\ne \emptyset\}$, and
\item[-] $b \in \ov \N = \N \cup \{\infty\}$ is the geometrical dimension of the simplicial complex $K_{n-a}$.
\end{itemize}
By convention, we set $\comp \emptyset =(-\infty,-\infty)$.
\end{definition}
The associated \emph{lexicographic order}  will prove to be the key in the forthcoming proofs by induction.

\medskip
Recall that a subcomplex $L$ of a simplicial complex $K$ is \emph{full} if any simplex of $K$ having all its vertices in $L$
is a simplex of $L$. 
As we will see, the following reinforcement of the notion of filtered simplicial complex is a necessary hypothesis of the main theorem.

\begin{definition}\label{def:full}
A filtered simplicial complex $K$ is said \emph{full} 
if any $K_{\ell}$ of its filtration is full.
\end{definition}

The standard simplex $K=\Delta$ can be endowed with different structures of  filtered simplicial complex.
\begin{itemize}
\item The first filtration is given by the  skeleta: we set $K_{\ell}=\Delta^{(\ell)}$. It is not full.
\item A series of full filtrations is  defined by induction, starting with the choice of one vertex, $K_{0}=\{v\}$.
For the simplicial subcomplex $K_{\ell}$ with $\ell\geq 1$, we choose  an $\ell$-dimensional face of $\Delta$ containing $K_{\ell-1}$.
\end{itemize}
 The fullness property can be recovered with a barycentric subdivision. 
 This is a general fact as the following result shows (see \cite[Remark 2]{MR833195}).

\begin{lemma}\label{bary}
The barycentric subdivision of a filtered simplicial complex is full.
\end{lemma}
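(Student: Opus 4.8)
The plan is to show that after one barycentric subdivision, the fullness condition
$$\sigma \cap |K_\ell| = \emptyset \quad \text{or} \quad \sigma \cap |K_\ell| \triangleleft \sigma$$
holds for every simplex $\sigma$ of $\sd K$ and every filtration index $\ell$. First I would recall that the barycentric subdivision $\sd K$ has, for its simplices, the chains of faces of $K$: a $k$-simplex of $\sd K$ is determined by a strictly increasing flag $\tau_0 \triangleleft \tau_1 \triangleleft \cdots \triangleleft \tau_k$ of simplices of $K$, with vertices the barycenters $\wh{\tau_0}, \ldots, \wh{\tau_k}$. The filtration on $\sd K$ is inherited from that of $K$, namely $(\sd K)_\ell = \sd(K_\ell)$, since each $K_\ell$ is a subcomplex and subdivision commutes with the subcomplex structure. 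The key point is that $|(\sd K)_\ell| = |K_\ell|$ as subspaces of $\R^p$, so the fullness condition for $\sd K$ involves the \emph{same} subspaces $|K_\ell|$ as for $K$; what changes is only that the simplices have been refined.

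The main step is the following geometric observation about the barycenters. For a flag $\tau_0 \triangleleft \cdots \triangleleft \tau_k$ defining a simplex $\sigma$ of $\sd K$, I would determine exactly which barycenters $\wh{\tau_i}$ lie in $|K_\ell|$. Because each $\tau_i$ is a simplex of $K$ and $|K_\ell|$ is a union of (closed) simplices, the barycenter $\wh{\tau_i}$ lies in $|K_\ell|$ if and only if $\tau_i \in K_\ell$, i.e. $\tau_i \subset |K_\ell|$ (here I use that the barycenter of a simplex lies in the open simplex, hence in $|K_\ell|$ precisely when the whole closed simplex is, by property 1 of a simplicial complex and the fact that $K_\ell$ is a subcomplex). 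Since the flag is increasing and $K_\ell$ is closed under taking faces, the set of indices $i$ with $\tau_i \in K_\ell$ is an initial segment $\{0, 1, \ldots, j\}$ of $\{0,\ldots,k\}$. Therefore the vertices of $\sigma$ lying in $|K_\ell|$ are exactly $\wh{\tau_0}, \ldots, \wh{\tau_j}$, and they span the face of $\sigma$ corresponding to the sub-flag $\tau_0 \triangleleft \cdots \triangleleft \tau_j$.

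It remains to upgrade "the vertices of $\sigma$ in $|K_\ell|$ span a face $\sigma'$" to "$\sigma \cap |K_\ell| = \sigma'$", which is the fullness condition. The inclusion $\sigma' \subset \sigma \cap |K_\ell|$ is clear. For the reverse inclusion, I would argue that a point $x \in \sigma$ has barycentric coordinates supported on the full vertex set of $\sigma$, and if $x \in |K_\ell|$ then, since $|K_\ell|$ is a subcomplex whose intersection with the carrier of $x$ in $K$ is a face, the coordinates on the vertices $\wh{\tau_{j+1}}, \ldots, \wh{\tau_k}$ (whose simplices $\tau_i \notin K_\ell$) must vanish; this forces $x \in \sigma'$. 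This is the step I expect to be the main obstacle, because it requires care in relating the linear structure of the subdivided simplex $\sigma$ (sitting in $\R^p$ with its own affine span) to the subcomplex $|K_\ell|$, and in verifying that no interior point of a face meeting $|K_\ell|$ only partially can sneak in. Once this is established for all $\sigma$ and all $\ell$, the case distinction $\sigma \cap |K_\ell| = \emptyset$ (when $j$ is undefined, i.e. $\tau_0 \notin K_\ell$) versus $\sigma \cap |K_\ell| = \sigma' \triangleleft \sigma$ gives exactly the definition of a full filtered complex, completing the proof.
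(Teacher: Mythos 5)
Your proposal is correct and takes essentially the same approach as the paper: the paper's intermediate claim --- that $\sigma\cap\alpha$ is empty or an initial face $[\widehat{\tau_0},\ldots,\widehat{\tau_\ell}]$ of the flag, for every simplex $\alpha\in K$ --- is precisely your ``main obstacle'' step, and both proofs settle it by the same support argument on barycentric coordinates. Only one phrase needs repair: the intersection of $|K_\ell|$ with a simplex of $K$ is in general a \emph{union} of faces rather than a single face (this failure is exactly why $K$ itself need not be full); the fix is that a point $x\in\sigma\cap|K_\ell|$ is interior to the face of $\tau_k$ spanned by the barycenters $\widehat{\tau_i}$ with positive coefficient, so that face lies in $K_\ell$, forcing those indices to satisfy $i\le j$.
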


\begin{proof}
Consider a filtered simplicial complex
$$
K =K_n  \supset K_{n-1} \supset K_{n-2} \supset \dots \supset K_0 \supset \emptyset,
$$
inducing the filtered simplicial complex
$
K' =K'_n  \supset K'_{n-1} \supset K'_{n-2} \supset \dots \supset K'_0 \supset \emptyset,
$
where the upperscript $'$ indicates the barycentric subdivision. 
Let $\sigma$ be a simplex of $K'$ with $\sigma \cap |K'_i| \ne \emptyset$. We need  to prove that 
 $ \sigma \cap |K'_i| $ is a face of $\sigma$. Notice that   $ \sigma \cap |K'_i| = \sigma \cap |K_i| $.

\smallskip
Let us denote $\widehat{\tau}$ the barycenter of a face $\tau\in K$.
By definition of the barycentric subdivision, any simplex of $K'$ is obtained from barycenters of successive faces. 
More precisely, 
the barycenters of a family of simplices of $K$,  
$\tau_0 \triangleleft \dots \triangleleft \tau_m$, determine a simplex of $K'$,
 $\sigma = [\widehat{\tau_0}, \ldots, \widehat{\tau_m}]$.  
Let us suppose that, for any $\alpha\in K$, we have
\begin{equation}\label{KK'}
\sigma \cap \alpha = \emptyset \hbox{ or }  \sigma \cap \alpha =  [\widehat{\tau_0}, \ldots, \widehat{\tau_\ell}] \hbox{ for some } \ell \in \{0,\dots,m\}.
\end{equation}
As $K_{i}$ is a simplicial complex, the intersection $\tau_m \cap |K_i|$ is a union of faces of $\tau_m$ and we conclude from $\sigma \subset \tau_m$ and \eqref{KK'}:
$$
\sigma \cap |K_i| 
= \sigma \cap \tau_m \cap |K_i| 
= [\widehat{\tau_0}, \ldots, \widehat{\tau_\ell}]
$$
  for some $\ell \in \{0,\dots,m\}$. 
  This implies  the announced conclusion  $\sigma \cap |K_i| \triangleleft \sigma$ and it  remains to prove \eqref{KK'}.
  
  \smallskip
  So, let $\alpha\in K$ and $\sigma = [\widehat{\tau_0}, \ldots, \widehat{\tau_m}]\in K'$ with $\sigma\cap\alpha\neq \emptyset$.
Any point $P \in \sigma$ has the canonical decomposition
$
P =\sum_{j\in J_P} t_j \widehat{\tau_j}
$
 where $t_j \in ]0,1]$ for each $j\in J_P$, $\sum_{j\in J_P} t_j=1$ and $\emptyset \ne J_P \subset \{0, \ldots,m\}$. 
 We define $\ell = \max \{ j \in J_P \mid \ P \in \sigma \cap \alpha\}$, which  belongs to $\{0, \ldots,m\}$ since $\sigma \cap \alpha  \ne \emptyset$.
By construction we have the inclusions $  \sigma \cap \alpha \subset  [\widehat{\tau_0}, \ldots, \widehat{\tau_\ell}]$ and 
$
 [\widehat{\tau_0}, \ldots, \widehat{\tau_\ell}] \subset \sigma$.
By definition, there exists a vertex $Q$ of  $\sigma \cap \alpha$ with $\ell \in J_Q$. 
The condition $\sum_{j\in J_Q} t_j \widehat{\tau_j} \in \alpha$ implies $  \widehat{\tau_\ell} \in \alpha$. 
This gives the second inclusion $[\widehat{\tau_0}, \ldots, \widehat{\tau_\ell}] \subset \alpha$.
 \end{proof}

\subsection{Canonical decomposition of $\sigma\in K$} \label{CaDe}
Let us suppose that the filtered simplicial complex $K$ is full. This implies that any simplex $\sigma \in K$ has the \emph{canonical decomposition}
$$
\sigma = \sigma_0  * \sigma_1 * \dots * \sigma_n,
$$
where 
$
\sigma \cap |K_\ell |  = \sigma_0 * \dots * \sigma_\ell,
$
for each $\ell\in\{0, \ldots,n\}$. 
(We use the convention $\emptyset * E = E$ for any simplicial complex $E$.)
For each $\ell \in\{0, \ldots,n\}$, we have:
$$
\sigma_\ell \ne \emptyset \Longleftrightarrow \sigma \cap |K_\ell |\menos | K_{\ell-1} | \ne \emptyset 
\Longleftrightarrow  \exists S \in \cS_K   \hbox{ with } \dim_v S = \ell \hbox{ and } \sigma \cap S \ne  \emptyset.
$$
By connectedness, 
the stratum $S$ is unique and we denote it  $S =S_\ell$. 
(Notice that $\ell=n$ is equivalent to the regularity of $S_{\ell}$.)
We set 
$$
I_\sigma^K = \{ \ell \in \{0,\ldots,n\} \mid \sigma_\ell \ne \emptyset\}
\quad \text{and}\quad
\cS_{\sigma}^{K}  = \{S \in \cS_K \mid S \cap \sigma \ne \emptyset\}.
$$
 In other words, we have
$I_\sigma^K  = \{\dim_v  S \mid S \in \cS_\sigma^K\}$ 
and 
$\cS_\sigma^K  = \{S_\ell  \mid\ell \in I_\sigma^K\}$.
We also denote $I_\sigma^K=I_{\sigma}$ and $\cS_\sigma^K =\cS_{\sigma}$ if there is no ambiguity.

\begin{definition}\label{clot}
Let $K$ be a filtered simplicial complex of complexity $(a,b)$.
A simplex $\sigma \in K$ is a \emph{clot} if 
 $\sigma = \sigma_{n-a}$ and
 $\dim \sigma =b$,
where $\comp (K) =(a,b)$.
The family of clots of $K$ is denoted  $\cB(K)$.
\end{definition}
So, a clot is a simplex $\sigma \in K$ living in $K_{n-a}$ with 
maximal geometrical dimension; i.e.,  $\dim \sigma = \dim K_{n-a}=b$.

\subsection{Induced filtered simplicial complexes} \label{IFC}

Let $
K $ be a filtered simplicial complex. Given a  simplicial subcomplex $L \subset K$
 the induced filtration $L_{i}=L\cap K_{i}$ defines
 a filtered simplicial complex structure on $L$.
 Notice that $\comp (L) \leq \comp (K)$.

Since $L_i \subset K_i$ for any $i\in\{0, \ldots,n\}$,  for any stratum $T \in \cS_L$  there exists a unique stratum $S \in \cS_K$ with $T\subset S$. We say that $S$ is the \emph{source} of $T$. Notice that $\dim_v  T = \dim_v S$.

Given $\sigma \in L$, we have a priori two canonical decompositions of $\sigma$: as a simplex of $L$ or
 as a simplex of $K$. Since $\sigma \cap L_k = \sigma \cap L \cap K_k = \sigma \cap K_k$, for each $k \in \{0, \ldots,n\}$, the two decompositions coincide.
In particular $I^K_\sigma= I^L_\sigma$, denoted $I_\sigma$.
Moreover, the family $\cS^K_{\sigma}$ is the family made up of the sources of the strata of $\cS_{\sigma}^L$. 
So the source of $T_\ell$ is $S_\ell$ for each $\ell \in I_\sigma$.

\subsection{Links and joins} \label{links}
 We introduce two  geometrical constructions associated to a  filtered  simplicial complex $K$, of complexity $\comp (K) = (a,b)$.

\medskip
The \emph{link} of a simplex $\beta$ of $ K$  is the  simplicial complex
\[
L_{\beta}= \{\sigma\in K \mid  \beta * \sigma \in K \}.
\]
Recall that $L_\beta$ inherits from $K$ a filtered simplicial complex structure.
 If the simplex $\beta$ is a clot and the filtered simplicial complex  $K$ is full, then
$\left(L_\beta\right)_{n-k}=\emptyset$ for $k\in \{ 0, \ldots, a \}$. In particular, $\comp (L_\beta) < \comp (K)$.
In this case, the \emph{join} $\beta * L_\beta$ of a clot $\beta$ inherits from
$ K$ the following structure of filtered simplicial complex:
\begin{equation}\label{DKBiss}
(\beta *L_\beta)_{i} =
\left\{
\begin{array}{cl}
\beta * (L_\beta)_{i} & \hbox{if }  n-a <i \leq n, \\
 \beta & \hbox{if } i = n-a,\\
\emptyset & \hbox{if } 0\leq i < n-a. \\
  \end{array}
  \right.
  \end{equation}
The family  $\cS_{\beta * L_\beta}$of strata of the join $\beta * L_\beta$  comes from the filtration \eqref{DKBiss}.
\subsection{Residual complex} \label{Res}

\begin{wrapfigure}{r}{0.4\textwidth} 
    \centering
    \includegraphics[width=0.4\textwidth]{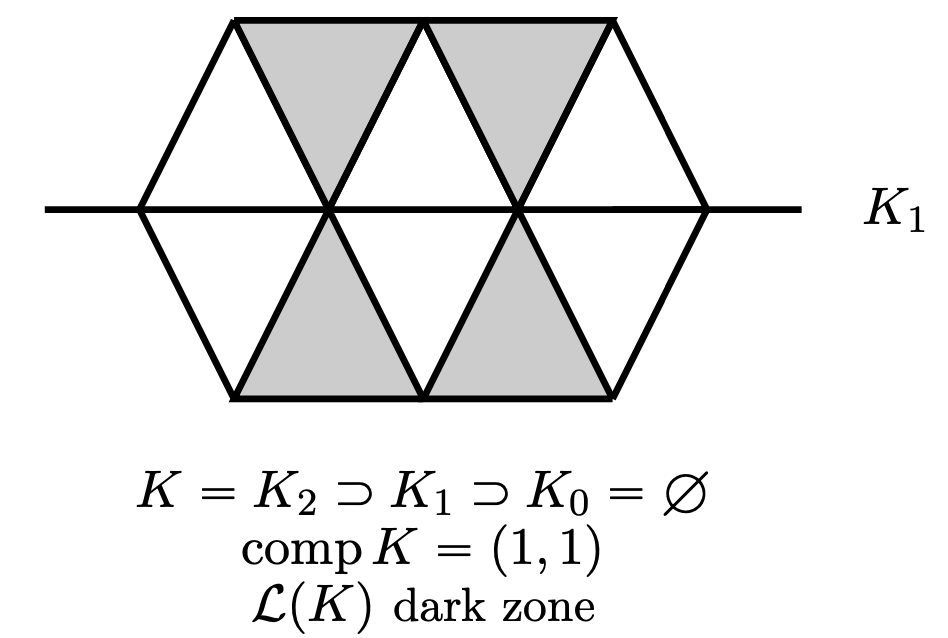}
\end{wrapfigure}

The \emph{residual complex} $\cL(K)$ of a filtered simplicial complex $K\ne \emptyset$ with $\comp K =(a,b)$ is the simplicial subcomplex defined by\\
\phantom . \hfill 
$
\cL(K) = \{ \sigma \in K \mid \dim (\sigma \cap | K_{n-a} | )<b\}.
$
\hfill
\phantom .

Given a simplicial subcomplex, $L\subset K$, we necessarily have
\phantom . \hfill 
$\cL(L)\subset \cL(K).$
\hfill
\phantom .

The equality  $\cL(K) =K$ occures if $b = \infty$. When $b$  is finite, we always have $\comp \cL(K) < \comp K$.
For $a=0$ we get $\cL(K)=K^{(b-1)}$. 

If $K$ is full,
any simplex $\sigma\in K$  has a canonical decomposition,
$\sigma=\sigma_{n-a}*\sigma_{n-a+1}\ast \dots\ast \sigma_{n}$, from which  we deduce,
$$
\sigma\in\cL(K) \Longleftrightarrow \dim \sigma_{n-a}<b\Longleftrightarrow \sigma_{n-a}\text{ is not a clot.}
$$
This gives the following decomposition
\begin{equation}\label{unionK}
K = \cL(K) \cup   \displaystyle \bigcup_{\beta \in \cB(K)} \beta * L_\beta,
\end{equation}
 and
\begin{equation}\label{interLK}
\cL(K) \cap   \displaystyle \bigcup_{\beta \in \cB(K)} \beta * L_\beta =  \bigcup_{\beta \in \cB(K)} \partial \beta * L_\beta
=  \bigcup_{\beta \in \cB(K)} \cL(\beta * L_\beta).
\end{equation}
The induced filtration on the join $\beta\ast L_{\beta}$ is that described in  \eqref{DKBiss}.
\subsection{Perversities}
A \emph {perversity} on a filtered simplicial complex $K$ is a map
$ \ov {p} \colon \cS_K \to \ov \Z =\Z \cup \{\pm\infty\}$ taking the value ~ 0 on the regular strata.
The couple $(K,\ov p)$ is a \emph{perverse filtered simplicial complex}. 
The \emph{dual perversity} $D\ov p$ is defined by $D\ov p(S) = \codim_v S -2 - \ov p(S)$ for any singular stratum $S \in \cS_K$.
For each $k\in \ov \Z$, we denote  $\ov k$ the perversity taking the value $k$ on each singular stratum.

Given a simplicial subcomplex $L \subset K$ and a perversity $\ov p$ on $K$, 
we also denote  $\ov p$ the  perversity defined on the  induced 
filtered simplicial complex $L$  by 
\begin{equation}\label{SE}
\ov p ( T) = \ov p(S),
\end{equation}
where $T\in \cS_L$ and $S\in \cS_K$ is the source of $T$. 
In general, in the rest of the text, the perversities considered on L are perversities induced from a perversity on K.

\section{Simplicial intersection homology}\label{sih}

 \begin{quote}
We present the simplicial version of the intersection homology associated to a filtered simplicial complex, as it appears
in the Appendix of \cite{MR833195}, or  with a more detailed wording in \cite[Section 3.2]{LibroGreg}.
We also develop some generic examples, such as the intersection homology of the pair $(K ,\cL(K))$ (cf. \propref{SimplRel})
or the intersection homology of a join (cf. \propref{CalculoJoin}).
\end{quote}

In this work, the   simplicial complexes  are not supposed to be oriented, but we use oriented  simplices.
An \emph{oriented simplex} of a simplicial complex $K$ is a  simplex of $K$ with an equivalence class of orderings 
of its vertices, where two orderings are equivalent if they differ by an even permutation. 
The \emph{simplicial chain module} $\hiru C*K$ is the module generated by the oriented simplices of $K$.
As a simplex is determined by its vertices we denote it by $[v_{0},\dots,v_{\ell}]$. 
So, two writings  corresponding to an even permutation of the vertices are identified. 
For an odd permutation, $\nu$, we set
 $[v_{0},\dots,v_{\ell}]= - [v_{\nu(0)},\dots,v_{\nu(\ell)}]$, which makes sense at the level of chains.

\medskip
Let $(K,\ov p)$ be a perverse filtered simplicial complex. 
A simplex $\sigma$ of $K$ is $\ov{p}$-\emph{allowable} if, for each  stratum $S\in \cS_K$, we have
\begin{equation}\label{defsimpl}
 \|\sigma\|_{S} = \dim (\sigma \cap S)  \leq \dim \sigma - \codim_v S +\ov p(S). 
\end{equation}
This condition is always satisfied when the stratum $S$ is regular. Thus, the 
\emph{$\ov{p}$-allowability condition} is equivalent to the inequality,
\begin{equation}\label{defsimpl2}
 \|\sigma\|_{S} = \dim (\sigma \cap S)   
 \leq \dim \sigma - 2 - D\ov p(S),
\end{equation}
for each \emph{singular} stratum $S\in \cS_{K}$.
Notice that $\sigma \cap S$ is a union of open faces of $\sigma$, so the dimension of $\sigma \cap S$ makes sense.
A chain $c \in \hiru C * K$ is a $\ov{p}$-\emph{allowable chain} if any simplex with a non-zero coefficient in $c$ is 
$\ov p$-allowable. It is a $\ov{p}$-\emph{intersection chain} if $c$ and $\partial c$ are $\ov{p}$-allowable chains.
The complex of $\ov{p}$-intersection chains is denoted $\lau C {\ov{p}}* K$.
The associated homology is the \emph{simplicial $\ov{p}$-intersection  homology} $\lau H {\ov{p}}* K$, or simply
the  \emph{simplicial intersection  homology} if there is no ambiguity.

\medskip
If $K_{n-1}=\emptyset$,  then $\lau C {\ov{p}}* K$ is the usual simplicial chain complex $\hiru C* K$.

\medskip
If $K$ is full, for each $\ell \in I_\sigma$ the number $\|\sigma\|_{S_\ell}$
is equal to $\|\sigma\|_\ell = \dim (\sigma_0* \dots * \sigma_\ell)$ and the allowability condition \eqref{defsimpl} becomes,
for each $\ell \in I_\sigma$ (i.e., $\sigma_{\ell}\neq \emptyset$),
\begin{equation}\label{defsimplBi}
 \|\sigma\|_{\ell}    \leq \dim \sigma - (n-\ell) +\ov p(S_\ell). 
\end{equation}

There is an important difference between the complex of simplicial chains and  that of $\ov{p}$-intersection chains. 
The first one is a free module over the family of simplices.  
 This is not the case in the second context since there are  $\ov{p}$-allowable simplices which are not 
 $\ov{p}$-intersection chains. 
 
\subsection{Relative intersection homology}\label{RIH} 
Let $(K,\ov p)$ be a perverse filtered simplicial complex
and  $L \subset K$ be a simplicial subcomplex, endowed with the induced filtration and perversity. 
The allowability condition of a simplex $\sigma \in L$ can be understood in $L$ itself or in $K$. 
Both points of view are equivalent. Let us see that.
The simplex $\sigma$ is $\ov{p}$-allowable in  $K$ if, and only if,
$$
\|\sigma\|_\ell \leq \dim\sigma -(n-\ell) + \ov p(S_\ell),
$$
for each $\ell \in I^K_\sigma$.
The simplex $\sigma$ is $\ov{p}$-allowable in  $L$ if, and only if,
$$
\|\sigma\|_\ell \leq \dim\sigma -(n-\ell) + \ov p(T_\ell),
$$
for each $\ell \in I^L_\sigma$.
These conditions are equivalent since the two decompositions of $\sigma$ are the same,  $I^K_\sigma=I^L_\sigma$,
and $\ov p(S_\ell) =  \ov p(T_\ell)$. %
The natural inclusion $L \hookrightarrow K$ gives the following exact sequence
\begin{equation}\label{sucesasta1}
\xymatrix@R=.1cm{ 
 0 \ar[r] 
 & 
\ar@{^{(}->}[r]
 \lau C   {\ov p} * L 
 & 
   \lau C   {\ov p} * K  \ar[r] 
   &  
\displaystyle \frac{  \lau C   {\ov p} * K } {\lau C   {\ov p} * L  }=   \lau C {\ov p}*{K,L}  \ar[r]
 & 0,
 }
 \end{equation}
  defining the \emph{relative simplicial intersection complex} $ \lau C {\ov p}*{K,L}$.
Its homology  is the \emph{relative  simplicial intersection homology} denoted  
 $\lau H{\ov{p}}* {K,L}$.

\subsection{Example }\label{Break}

 \begin{wrapfigure}{r}{0.3\textwidth} 
    \centering
    \includegraphics[width=0.3\textwidth]{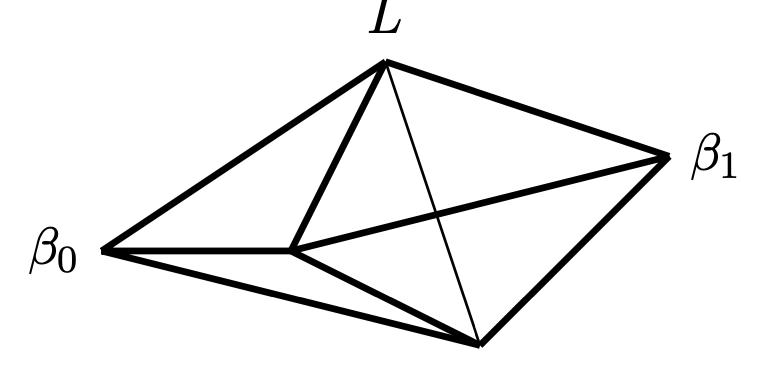}
\end{wrapfigure}
The determination of the relative complex $\lau C {\ov p} *{ K^{(\ell)} , K^{(\ell-1)} }$ does not go through the family of $\ell$-simplices of $K $ as in the classical case. 
Let us see an example.

As simplicial complex $K$, we consider the suspension of a triangle $L$. 
We denote $\beta_{0}$, $\beta_{1}$ the apexes of the suspension.
The 2-skeleton of $K$ is $K$ and its 1-skeleton  is the union of the edges.
This simplicial complex $K$ is endowed with the filtration
$$
\emptyset = K_{-1} \subset \{\beta_0,\beta_1\} = K_0 = K_1 \subset K_2=K.
$$
A straightforward calculation gives, for the $\ov 0$-perversity, $\lau H {\ov 0} j {K^{(2)}, K^{(1)}} =0$, for $j\neq 2$ and
$
 \lau H {\ov 0} 2 {K^{(2)}, K^{(1)}} = {\sf R} \oplus { \sf R }.
$
On the other hand, we have  $\bigoplus_{\sigma\in K, \dim \sigma =2} \lau H {\ov 0} 2 {\sigma ,  \partial \sigma} = 0$.
Therefore the formula \eqref{bb} cannot be true. 

For having a decomposition of the relative intersection homology, we must replace the skeleton by the residual complex.
Let us notice that  $\cL(K) = L$, which is also the link of $\beta_0, \beta_1$. Here, we do have the 
following decomposition, for any $j$,
$$
\lau H {\ov 0} j {K,\cL(K)} = \lau H {\ov 0} j {K,L}   =\bigoplus_{i=0,1} \lau H {\ov 0} j {\beta_i*L,L} 
=\bigoplus_{i=0,1} \lau H {\ov 0} j {\beta_i*L,\cL(\beta_i*L)}.
$$

This decomposition exists in the general case of a full simplicial complex, as we prove in \propref{SimplRel}.
The  proof proceeds by induction on the complexity.
 Firstly, we need the following Lemma.

\begin{lemma}\label{allowsimp}
Let  $(K,\ov p)$ be a perverse  full  filtered simplicial complex. 
Consider a face $\alpha$ of a clot $\beta \in \cB(K)$ and a simplex $\varepsilon$ of the link ${L_\beta}$. 
If the stratum   $Q\in\cS_K$ containing $\beta$ is  singular, we have the following equivalence:
\begin{equation}\label{chainsingularbeta}
\ \alpha\ast \varepsilon  \hbox{ is a $\ov{p}$-allowable simplex } 
\Longleftrightarrow 
\left\{
\begin{array}{l}
\dim \varepsilon \geq D\ov p(Q)  +  1\hbox{ and}\\
 \varepsilon \hbox{ is a $\ov{p}$-allowable simplex.  }
\end{array}
\right.
\end{equation}
\end{lemma}
\begin{proof}
We write $q=\dim_v  Q$.
We have
\begin{eqnarray*}
I_{\alpha\ast\varepsilon} &=&
 \{ \ell \in \{0,\ldots, n\} \mid (\alpha\ast\varepsilon) \cap \left( |(\alpha * L_\beta)_\ell |
\menos 
 |(\alpha * L_\beta)_{\ell-1} |
 \right) \ne \emptyset \}
 \\
 &=&
 \{ q \} \cup  \{ \ell \in \{a+1 ,\ldots, n\} \mid  \varepsilon 
 \cap \left( |( L_\beta)_\ell |
\menos 
 |(L_\beta)_{\ell-1} |
 \right) \ne \emptyset \}
 =\{ q\} \cup I_\varepsilon.
 \end{eqnarray*}
 So,
 \begin{eqnarray*}
 &&
\alpha\ast\varepsilon \hbox{ is a $\ov{p}$-allowable  simplex} 
 \Longleftrightarrow
 \left\{
\begin{array}{l}
\|\alpha\ast\varepsilon\|_q \leq \dim (\alpha\ast\varepsilon) - (n-q)+ \ov p(Q) \hbox{ and}\\
\|\alpha\ast\varepsilon\|_\ell \leq \dim (\alpha\ast\varepsilon) -(n-\ell) + \ov p(S_\ell), \ 
\forall \ell \in I_\varepsilon\menos\{n\},
\end{array}
\right.
\\[,2cm]
&& \Longleftrightarrow
 \left\{
\begin{array}{l}
\dim \alpha \leq  \dim  \alpha  +\dim\varepsilon +1 -(n-q)+ \ov p(Q) \hbox{ and}\\
\dim \alpha + \|\varepsilon\|_\ell  +1 \leq \dim  \alpha + 1 + \dim \varepsilon  - (n-\ell)+ \ov p(S_\ell), \ 
\forall \ell \in I_\varepsilon\menos\{n\},
\end{array}
\right.
\\[,2cm]
 &&
  \Longleftrightarrow
 \left\{
\begin{array}{l}
0\leq \dim \varepsilon +1 -(n-q)+ \ov p(Q) \hbox{ and}\\
\|\varepsilon\|_\ell  \leq  \dim \varepsilon  - (n-\ell)+ \ov p(S_\ell), \ \forall \ell \in I_\varepsilon\menos\{n\},
\end{array}
\right.
\Longleftrightarrow
 \left\{
\begin{array}{l}
\dim \varepsilon  \geq  D\ov p(Q) + 1\hbox{ and}\\
 \varepsilon \hbox{ is a $\ov{p}$-allowable simplex.  }
\end{array}
\right.
 \end{eqnarray*}
\end{proof}

In the case of a face $\alpha$ of the clot $\beta$, we have
\begin{equation}\label{chainnull}
\alpha \hbox{ is a $\ov{p}$-allowable  simplex} 
 \Longleftrightarrow
 \left\{
\begin{array}{l}
Q \text{ is singular and } D\ov{p}(Q)+2\leq 0, \text{ or }\\
Q \text{ is regular.}
\end{array}
\right.
\end{equation}

\begin{lemma}\label{decomposition}
Let $(K,\ov{p})$ be a perverse full filtered simplicial complex. Then, we have
$$\lau C  {\ov p} *{K}=\lau C  {\ov p} *{\cL(K)}+ \bigoplus_{\beta \in \cB(K)}  \lau C  {\ov p} *{\beta\ast L_{\beta}}.
$$
\end{lemma}

\begin{proof}
It suffices to prove the inclusion $\subseteq$. 
From \eqref{unionK}, we know that any chain $c\in \lau C  {\ov p} *{K}$ can be written
$c=f+ \sum_{\beta\in \cB(K)} c_{\beta}$
with $f\in \lau C  {} *{\cL(K)}$ and $c_{\beta}\in \lau C  {} *{\beta*L_{\beta}}$.
As  (see \eqref{interLK}) $\cL(K) \cap    \bigcup_{\beta \in \cB(K)} \beta * L_\beta =  \bigcup_{\beta \in \cB(K)} \partial \beta * L_\beta$,
we obtain a unique writing of $c$ as
$$c=f+\sum_{\beta\in \cB(K)} c_{\beta}= f+\sum_{\beta\in \cB(K)} n_{\beta}\ \beta*e_{\beta} + m_{\beta}\ \beta,$$
with 
$n_{\beta}, m_{\beta}\in \sf R$ and $e_{\beta}\in L_{\beta}$.
Since the elements of this decomposition are independent, the $\ov{p}$-allowability of $c$ gives the $\ov{p}$-allowability
 of the chains $f$, $c_{\beta}$, $e_{\beta}$ (see \lemref{allowsimp}).
 Let $Q_{\beta}$ be the stratum containing $\beta$. From \lemref{allowsimp} and \eqref{chainnull}, we 
 also deduce
 $$
 \left\{
 \begin{array}{ll}
 \dim e_{\beta}\geq D\ov{p}(Q_{\beta})+1
 &
 \text{if } n_{\beta}\neq 0, \text{ and}\\[.2cm]
 D\ov{p}(Q_{\beta})+2\leq 0
 &
 \text{if } m_{\beta}\neq 0 \text{ and } Q_{\beta} \text{ singular}.
 \end{array}\right.
 $$
 The boundary $\partial c$ of $c$ is given by
\begin{equation}\label{diffc}
\partial c =  \sum_{\beta \in \cB(K )} n_{\beta}\ \partial \beta* e_\beta +  
\sum_{\beta \in \cB(K )} 
(-1)^{\dim\beta +1}n_{\beta}\ \beta* \partial e_\beta + \sum_{\beta\in \cB(K)}m_{\beta}\ \partial\beta + \partial f.
\end{equation}
(In the case $\dim \beta=0$, the join $\partial \beta*e_{\beta}$ means $e_{\beta}$, and similarly if $\dim e_{\beta}=0$.)
Following a new time  \lemref{allowsimp} and \eqref{chainnull}, we get that each element of 
the first sum in \eqref{diffc} is a $\ov{p}$-allowable chain. Therefore, the chain
$$\sum_{\beta \in \cB(K )} 
(-1)^{\dim\beta +1}n_{\beta}\ \beta* \partial e_\beta + \sum_{\beta\in \cB(K)}m_{\beta}\ \partial\beta + \partial f$$
is $\ov{p}$-allowable and so is every element of this sum. 
In short,  we get  
$f \in \lau C {\ov p}* {\cL(K)}$
and
$c_\beta \in \lau C {\ov p}* {\beta\ast L_\beta  }$, 
for each $ \beta \in \cB(K )$,  which gives the claim.
\end{proof}

\begin{proposition}\label{SimplRel} 
Let  $(K,\ov p)$ be a perverse  full  filtered simplicial complex. The inclusion maps induce an isomorphism of chain complexes,
\begin{equation}\label{SimplRelform}
\bigoplus_{\beta \in \cB(K)}   \lau C  {\ov p} *{ \beta * L_\beta,  \partial \beta * L_\beta}  
\xrightarrow{\cong}
\lau C {\ov p} *{ K,\cL(K)}.
\end{equation}
\end{proposition}

\begin{proof}
Notice that, for each clot $\beta \in \cB(K)$, we have
$\partial \beta * L_\beta=
\cL( \beta * L_\beta)
\subset \cL(K)$.
(The case $\dim\beta=0$ corresponds to 
$L_{\beta}=\cL(\beta*L_{\beta})\subset \cL(K)$.)
Thus the inclusion maps are well defined. 
The result comes from \lemref{decomposition} and \eqref{interLK}.
\end{proof}

\begin{remarque}\label{clas}
Let us consider a filtered simplicial complex $K$ with $\dim K =b< \infty$ and  the trivial filtration $K=K_n\supset \emptyset$. 
So, we have $\comp K = (0,b)$, $K = K^{(b)}$, 
$\beta(K) = \{\beta \in K \mid \dim \beta =b\}$ and $\cL(K) = K^{(b-1)}$. The previous formula \eqref{SimplRelform} becomes 
$$
\lau C {} *{ K,\cL(K)}
\cong
\mathop{\bigoplus_{\beta \in K }}_{\dim \beta =b}    \lau C  {} *{ \beta, \partial \beta}
\cong
\lau C {} *{ K^{(b)},K^{(b-1)}}.
$$
Thus our approach contains the classical formula  \eqref{bb}. 
\end{remarque}

\subsection{Intersection homology of the join}\label{subsec:joinsimplicial}
To make the writing easier, we employ the dual perversity $D\ov{p}$ of $\ov{p}$.
 Notice that the intersection homology $\lau H  {\ov p} 0 {K}$ of a 
filtered simplicial complex can be 0 when there are no regular strata. 
This is why the following statement contains more cases to consider than that of \cite[Proposition 1.49]{CST1} for instance. 
The same phenomenon appears for the calculation of the intersection homology of a cone between  King'statement  \cite[Proposition 5]{MR800845} and Friedman's
 \cite[Theorem 4.2.1]{LibroGreg}.

\begin{proposition}\label{CalculoJoin}
Consider a perverse full filtered simplicial complex $(K,\ov p)$.
Let  $\beta \in \cB(K)$ be a clot such that the stratum $Q \in \cS_K$ containing $\beta$ is a singular stratum.
We have,
$$
\lau H  {\ov p} i {\beta * L_\beta}
=
\left\{
\begin{array}{cl}
\lau H  {\ov p} {i} {  L_\beta}
&
\hspace{1cm}  \hbox{if } \
  i\leq D\ov p(Q),
\\[.2cm]
0&
\hspace{1cm} \hbox{if } 
i>D\ov p(Q),\,i\neq 0,
\\[.2cm]

{\sf R}
&
\hspace{1cm} \hbox{if } 
i=0 
\hbox{ and } D\ov p(Q)<-1,
\\[.2cm]
{\sf R}
&
\hspace{1cm} \hbox{if } 
i=0, \, 
D\ov p(Q)=-1
 \hbox{ and } 
\lau H {\ov p} 0  {L_\beta} \ne 0,
 \\[.2cm]
0
&
\hspace{1cm}  \hbox{if } i=0,\,
D\ov p(Q)= -1  \hbox{ and } \lau H {\ov p} 0 { L_\beta }=  0.
\end{array}
\right.
$$
 The first isomorphism is given by the inclusion $L_\beta \hookrightarrow \beta * L_\beta$. 
 On the third and fourth lines, a generator of $\sf R$ is  a  point in $\beta$ or any 
 $\ov{p}$-allowable point in $L_{\beta}$, respectively.
 \end{proposition}

\begin{proof}
We write  $\beta=\langle v_0, \ldots,v_\ell \rangle$ and $q=\dim_v  Q$.
(Since $Q$ is singular, we  have  $D\ov p(Q) = n-q-2 - \ov p(Q)$.)
Any chain $c \in \hiru C *{\beta * L_\beta}$ can be written as,
\begin{equation}\label{xii}
c = f +  \sum_{\alpha\triangleleft  \beta} c_{\alpha}=
f+ \sum_{\alpha\triangleleft  \beta}  n_{\alpha}\ \alpha * e_\alpha +m_{\alpha}\alpha,
\end{equation}
where $f \in \hiru C *{ L_\beta}$, $n_{\alpha}, m_{\alpha} \in \sf R$ and  $e_\alpha\in L_\beta$, for any $\alpha \triangleleft \beta$.
With these notations, the characterizations \eqref{chainsingularbeta} and \eqref{chainnull} become
\begin{equation}\label{keyjoin}
c \text{ is } \ov{p}\text{-allowable}
\Longleftrightarrow
\left\{
\begin{array}{l}
\dim e_{\alpha}\geq D\ov{p}(Q)+1
\text{ and } e_{\alpha} \text{ is } \ov{p}\text{-allowable,}\quad
\text{if } n_{\alpha} \neq 0,\\
D\ov{p}(Q)+2\leq 0,\quad
 \text{if } m_{\alpha}\neq 0,\\
\text{and } f \text{ is } \ov{p}\text{-allowable.}
\end{array}
\right.
\end{equation}
We determine  $\lau H  {\ov p}  i {\beta * L_\beta}$ by distinguishing the three cases boxed below.

\medskip
\fbox{$\bullet$ $i=0$.}
A chain $c \in \lau C  {\ov p} 0{\beta * L_\beta}$ is of the form
$c=\sum_{k=0}^b n_{k}\ v_{k}+f$ with $f\in \lau C  {\ov p} 0{ L_\beta}$
or
$c=f$ if $D\ov{p}(Q)+1\geq 0$.
We  distinguish 3 cases. The first one, $D\ov{p}(Q)+1>0$, is postponed in the third item, for any degree $i$. 
So, we are left with:
\begin{enumerate}[\hspace*{2cm}]
\item[$\bullet \bullet$]  $D \ov p(Q) +1 = 0$.  Here, $c=f\in \lau C{\ov p} 0  {L_\beta} $. 
Two  $\ov{p}$-allowable points $p', p''\in L_{\beta}$ define the same class in 
$ \lau H{\ov p} 0  {\beta*L_\beta}$, 
since
$p'-p''=\partial(v_{0}*p'-v_{0}*p'')$,
where $v_{0}*p'$ and $v_{0}*p'$ are  $\ov{p}$-allowable (see \eqref{keyjoin}). 
As a point cannot be the boundary of a 1-chain, the vanishing of 
$ \lau H{\ov p} 0  {\beta*L_\beta}$
is equivalent to the non-existence of $\ov{p}$-allowable points in  $\lau C{\ov p} 0  {L_\beta}$ which gives 
the two last lines of the statement.
\item[$\bullet \bullet$] $D\ov p(Q) + 1 < 0$. 
Let us write $f=\sum_{i\in I}m_{i}\ p_{i}$, with $m_{i}\in \sf R$ and $p_{i}\in L_{\beta}^{(0)}$.
From \eqref{keyjoin}, we deduce that $\sum_{i\in I}m_{i}\ v_{0}\ast p_{i}$ is a $\ov{p}$-intersection chain of boundary
$f -\sum_{i\in I}m_{i}\ v_{0}$.
So the map $\iota_{0}\colon \lau H{\ov p} 0  {\beta} \to \lau H{\ov p} 0  {\beta * L_\beta}$,
induced by the canonical inclusion, is surjective.
Recall that $\lau H{\ov p} 0  {\beta}=\lau H{} 0  {\beta}=\sf R$ is generated by $[v_{0}]$.
If $n\, [v_{0}]$, $n\in\sf R$, is sent to zero by $\iota_{0}$, we have 
$n\, v_{0}=\partial \gamma$, with $\gamma \in \lau C{\ov p} 1  {\beta*L_\beta}$. 
Thus the augmentation map $\epsilon$ gives
$n=\epsilon(\partial \gamma)=0$. We deduce
$\lau H{\ov p} 0  {\beta }\cong \lau H{\ov p} 0  {\beta * L_\beta}\cong \sf R$
and the third line of the statement.
\end{enumerate}

\medskip
\fbox{$\bullet$  $i>0$ and $ D\ov  p(Q)  \leq -1 $.} 
Let $ \eta \triangleleft \beta$ be the opposite face to the vertex $v_0$ or $ \eta =\emptyset$ when $\dim \beta =0$. 
We consider a cycle $\gamma \in \lau C {\ov p}{i}{ \beta*L_\beta}$ and we prove that it is a boundary. 
We decompose $\gamma=\gamma_{0}+\gamma_{1}$ where $\gamma_{0}$ is a chain of simplices having $v_{0}$ as vertex and  $\gamma_{1}=\sum_{i} m_{i}\ \gamma'_{i}$, with $\gamma'_{i}\in \eta*L_{\beta}$.
As $\gamma$ is a cycle, a direct calculation shows that $\gamma=\partial (\sum_{i} m_{i}\ v_{0}\ast \gamma'_{i})$,
since $i>0$.
To prove the second line of the statement, 
 we only need to establish that  $v_{0}\ast \gamma'_{i}$ is a $\ov{p}$-allowable simplex.
For that we use \eqref{keyjoin}. As $\gamma'_{i}$ is $\ov{p}$-allowable by hypothesis, 
we have the three different cases:
\begin{itemize}
\item[$\bullet \bullet$] $\gamma'_{i}=\alpha$ where $\alpha \triangleleft \eta$ and $D\ov{p}(Q)+2\leq 0$,
\item[$\bullet \bullet$] $\gamma'_{i}=\alpha*e_{\alpha}$ where $\alpha \triangleleft \eta$ and $e_{\alpha}\in L_{\beta}$
is a $\ov{p}$-allowable simplex,
\item[$\bullet \bullet$] $\gamma'_{i}= e_{\alpha}$ where  $e_{\alpha}\in L_{\beta}$ is a $\ov{p}$-allowable simplex,
\end{itemize}
since $D\ov{p}(Q)+1\leq 0$.
The same analysis  for the simplex $v_{0}*\gamma'_{i}$, again  with \eqref{keyjoin}
and $D\ov{p}(Q)+1\leq 0$, gives the claim.

\medskip
\fbox{$\bullet$ $D\ov  p(Q) \geq 0 $.} 
Let us introduce the following truncation of the complex $\lau C{\ov p} *{L_\beta}$,
$$
\lau {\tau C}{\ov p} i{L_\beta} =
\left\{
\begin{array}{cl}
\lau C {\ov p} i {L_\beta} & \hbox{if }  i > D \ov  p(Q)  +1,\\
\lau C {\ov p} i {L_\beta} \cap \partial^{-1}(0) & \hbox{if }  i = D\ov  p(Q) +1, \\
0 & \hbox{if } i < D\ov  p(Q) +1.
\end{array}
\right.
$$
Notice that 
$
\hiru H {i}{ \lau   {\tau C}{\ov p}  *
{{L_\beta}} } =
\lau H {\ov p} {i} {L_\beta}
$,
if $i\geq   D\ov  p(Q) +1$,
and
$
\hiru H {i}{ \lau   {\tau C}{\ov p}  *{{L_\beta}} } =0$ otherwise.
 Let us consider the map,
 $$
\Psi \colon \hiru C* \beta \otimes \tau\lau C {\ov p} * {L_\beta} \to  \lau C {} {*+1} {\beta * L_\beta},
$$
defined  at the level of simplices by
$
\Psi(\alpha \otimes \gamma ) = (-1)^{\deg \alpha} \alpha * \gamma
$.
(Notice that $\deg \gamma >0$.) 
By abuse of notation, we will also denote $*$ the extension of $\Psi$ to chains. 
We will use it mainly in the case of the boundary of a simplex.
Ce map $\Psi$ verifies the following properties.
 
 $i)$ If $\deg \alpha>0$ we have
\begin{eqnarray*}
\Psi (\partial(c\otimes \gamma )) & =& \Psi (\partial \alpha \otimes \gamma )  +(-1)^{\deg \alpha} \Psi (\alpha \otimes \partial \gamma )
=
 (-1)^{\deg \alpha-1}   (  \partial \alpha * \gamma )+ ( \alpha *\partial \gamma )
 \\
 & =&
 (-1)^{\deg \alpha-1}  (   \partial \alpha * \gamma    +  (-1)^{\deg \alpha-1}   \alpha *\partial \gamma  )
 =
  (-1)^{\deg \alpha-1}   \partial (\alpha*\gamma )
  \\
&  =&
  - \partial \Psi (\alpha \otimes \gamma ).
\end{eqnarray*}

$ii)$ If $\deg \alpha=0$, we have
$$
\Psi (\partial(\alpha \otimes \gamma )) = \Psi (\alpha \otimes \partial \gamma )
=
   \alpha *\partial \gamma 
 =
    \gamma   -\partial (\alpha * \gamma )
 =
 \gamma  - \partial \Psi (\alpha \otimes \gamma ).
$$

 $iii)$ The simplex $\Psi(\alpha\otimes \gamma)$ is $\ov{p}$-allowable, see \eqref{keyjoin}.
 
 \medskip
 From these points, we deduce that the map
$$
\psi \colon \hiru C* \beta \otimes \tau\lau C {\ov p} * {L_\beta} \to  \frac{\lau C {\ov p} {*+1} {\beta * L_\beta}}{\lau C {\ov p} {*+1} {{L_\beta}}},
$$
defined  by
$
\psi(\alpha \otimes \gamma ) = (-1)^{\deg \alpha} [\![ \alpha * \gamma ]\!]
$
is a well-defined chain map of degree 1.
It is clearly a monomorphism. Let us now prove that $\psi$ is surjective.

\smallskip
Let $c\in \lau C {\ov p} i {\beta * L_\beta}$. From \eqref{keyjoin}, we have
$$c=f+ \sum_{\alpha \triangleleft \beta} n_{\alpha}\ \alpha*e_{\alpha}+ \sum_{\alpha \triangleleft \beta}m_{\alpha}\ \alpha,$$
with $f\in C_{*}(L_{\beta})$, $n_{\alpha},\,m_{\alpha}\in \sf R$
and $e_{\alpha}\in L_{\beta}^{(j)}$ with $j\geq D\ov{p}(Q)+1$.
Notice that $\dim e_{\alpha}>0$ for each $n_{\alpha}\neq 0$.
The boundary of $c$ is also $\ov{p}$-allowable, with
$$
\partial c= \partial f +
\sum_{\underset{\dim \alpha=0}{\alpha \triangleleft \beta}} n_{\alpha} e_{\alpha} +
\sum_{\underset{\dim \alpha>0}{\alpha \triangleleft \beta}} n_{\alpha}\ \partial \alpha*e_{\alpha}+
\sum_{\alpha \triangleleft \beta}(-1)^{\deg \alpha -1} n_{\alpha}\ \alpha* \partial e_{\alpha}+
\sum_{\alpha \triangleleft \beta}m_{\alpha}\ \partial \alpha .
$$
From \eqref{keyjoin}, we deduce $f\in \lau C {\ov p} * { L_\beta}$ and $e_{\alpha}\in \tau \lau C {\ov p} * { L_\beta}$,
which implies 
$\sum_{\alpha \triangleleft \beta}(-1)^{\deg \alpha}\ n_{\alpha}\ \alpha \otimes e_{\alpha}\in \lau C {} * { \beta} \otimes \tau 
\lau C {\ov p} * { L_\beta}$. 
This gives the claim since
$$\psi(\sum_{\alpha \triangleleft \beta}(-1)^{\deg \alpha}n_{\alpha}\, \alpha * e_{\alpha})=
[\![\sum_{\alpha \triangleleft \beta}
n_{\alpha}\, \alpha * e_{\alpha}]\!]=
[\![c]\!].
$$

\medskip
Also, the map $\Phi \colon  \tau\lau C {\ov p} * {L_\beta} \to  \hiru C* \beta \otimes \tau\lau C {\ov p} * {L_\beta}$, 
defined by $\Phi(\gamma ) = -v_0 \otimes \gamma $, is a quasi-isomorphism. 
Let us consider the short exact sequence
\begin{equation}\label{equa:short}
\xymatrix{
0 \ar[r] & 
\lau C {\ov p} * {L_\beta} 
 \ar@{^{(}->}[r] & \lau C {\ov p} * {\beta * L_\beta} \ar[r]& \displaystyle
 \frac{\lau C {\ov p} {*} {\beta * L_\beta}}{\lau C {\ov p} {*} {{L_\beta}}}
\ar[r] &
 0.
}
\end{equation}
By using $\Psi $ and $\Phi$, we may replace the homology of the quotient by the homology of the truncation, with a shift of one degree.
Therefore, the long exact sequence associated to \eqref{equa:short} can be written,
$$
\xymatrix{
\dots \ar[r] & 
\lau H {\ov p}{ i+1} {L_\beta} \ar[r] & 
\lau H {\ov p} {i+1} {\beta * L_\beta} \ar[r]&
\hiru H {i}{ \lau   {\tau C}{\ov p}  *{{L_\beta}} }
\ar@^{(->}[r]^-\delta &
\lau H {\ov p} {i}  {L_\beta}\ar[r] &
\dots,
}
$$
where the connecting  map $\delta$ becomes the inclusion. 
Replacing $\hiru H {i}{ \lau   {\tau C}{\ov p}  *{{L_\beta}} }$ by its value, we get
 $\lau H {\ov p} {i} {\beta * L_\beta}  \cong \lau H {\ov p} {i}  {L_\beta}$, if $i\leq  D \ov  p(Q) $,
and $\lau H {\ov p} {i} {\beta * L_\beta} =0$ otherwise.
Moreover, the isomorphisms are induced by the inclusion $L_\beta \hookrightarrow \beta * L_\beta$.
\end{proof}

When $Q$ is a regular stratum, we have $L_\beta =\emptyset$, $\lau H {\ov p} i {\beta *L_\beta} =\lau H {\ov p} i {\beta}  =  \hiru H i\beta =0$ if $i> 0$
and 
$\lau H {\ov p} 0 {\beta *L_\beta}=\lau H {\ov p} 0 {\beta}  =
 \hiru H 0 \beta =\sf R$.

\section{Singular intersection homology}

\begin{quote} 
 The singular version of the intersection homology goes back to King \cite{MR800845}. 
We focus here on the singular  intersection homology of  the realization of a filtered simplicial complex $K$.  
As in the simplicial case of \secref{sih}, we develop  the relative intersection homology of  the pair $(|K| ,| \cL(K)| )$ 
(cf. \propref{SimplRelBis})
and the intersection homology of the realization of a join (cf. \propref{CalculoJoinBis}).
\end{quote}

\begin{definition}\label{def:filteredspace}
A \emph{filtered space} is a Hausdorff topological space $X$ endowed with a filtration by closed subspaces,
$$X=X_{n}\supset X_{n-1}\supset \dots\supset X_{0}\supset X_{-1}=\emptyset.
$$
The integer $n$ is the dimension of $X$. The $i$-strata of $X$ are the   non-empty   connected components of $X_{i}\menos X_{i-1}$. 
The open strata are called \emph{regular},
the other ones being called \emph{singular.} The set of singular strata of $X$ is denoted $\cS_{X}$,
or $\cS$ if there is no ambiguity.
The \emph{dimension}  of a stratum $S \subset X_i\menos X_{i-1}$ is $\dim_v S=i$. 
Its \emph{codimension} is $\codim_v S = n-i$.
\end{definition}

Given a $n$-dimensional filtered simplicial complex $K$, the associated filtration
 \begin{equation*}
|K| = |K_{n} |\supset |K_{n-1}| \supset |K_{n-2}| \supset \ldots \supset 
|K_{0} | \supset K_{-1} = \emptyset ,  
\end{equation*}
defines a $n$-dimensional filtered space. 
By definition, there is a canonical bijection  $\cS_{|K|} \cong \cS_K$.

\subsection{Induced filtered spaces} 
Let $
X = X_{n} \supset X_{n-1} \supset X_{n-2} \supset \ldots \supset 
X_{0} \supset X_{-1} = \emptyset $ be a filtered space. Given a  subset $Y \subset X$,
the induced filtration $Y_{i}=Y\cap X_{i}$ defines a filtered space structure on $Y$.
Since $Y_i \subset X_i$ for any $i\in\{0, \ldots,n\}$, then for any stratum $T \in \cS_Y$ there exists a unique stratum $S \in \cS_X$ with $T\subset S$. 
We say that $S$ is the \emph{source} of $T$. Notice that $\dim_v  T = \dim_v S$.

\subsection{Perversities}
A \emph {perversity} on a filtered space $X$ is a map
$ \ov {p} \colon \cS_X \to \ov \Z $ taking the value ~ 0 on the regular strata.
The couple $(X,\ov p)$ is a \emph{perverse filtered space}. 
Given a subset  $Y \subset X$ and a perversity $\ov{p}$ on $X$, we also denote  $\ov p$ the  perversity defined on the  induced filtered space $Y$  by 
\begin{equation*}\label{SEBis}
\ov p ( T) = \ov p(S),
\end{equation*}
where $T\in \cS_Y$ and $S\in \cS_X$ is the source of $T$.

\subsection{Perverse degree}
For each  stratum  $S \in \cS_X$, the \emph{perverse degree of  a singular simplex $\sigma \colon \Delta \to X$ along   $S$} is 
 $$
 \|\sigma\|_{S}=\left\{
 \begin{array}{cl}
 -\infty &\text{if } S\cap \sigma(\Delta)=\emptyset,\\
\dim \sigma^{-1}(S) &\text{if not.}
  \end{array}\right.
  $$
By definition, the \emph{dimension} of a non-empty subset $A\subset \Delta$ is the smallest $s\in \N$ 
for which the subset  $A $ is included in the $s$-skeleton of $\Delta$.

\subsection{The definition}
We have all the ingredients needed for  the definition of the intersection homology of a filtered space.
(For further information, the reader  should consult the historical definition in \cite{MR800845}, or \cite[Section 3.4]{LibroGreg}, \cite{CST3}.)

\begin{definition}\label{def:allowableMacPherson}
Let   $(X,\ov p)$ be a perverse filtered space. A singular simplex  $\sigma\colon \Delta\to X$, is $\ov{p}$-\emph{allowable} if,
\begin{equation}\label{equa:allowable} 
\|\sigma\|_S \leq \dim \Delta -  \codim_v S +\ov p(S),
\end{equation}
  for each  stratum $S\in \cS_X$. This condition is always satisfied if the stratum $S$ is regular.
  Thus, the $\ov{p}$-allowability condition is equivalent to the inequality 
\begin{equation}\label{equa:allowable2} 
\|\sigma\|_S \leq \dim \Delta -  D\ov{p}(S) -2,
\end{equation}  
  for each \emph{singular} stratum $S\in \cS_{X}$.
A singular chain $c$ is $\ov{p}$-\emph{allowable} if any simplex with a non-zero coefficient in $c$ is 
$\ov{p}$-allowable. It is a \emph{$\ov{p}$-intersection chain} if $c$ and $\partial c$ are $\ov{p}$-allowable chains.
The associated homology is the \emph{singular intersection homology} denoted $\lau H {\ov{p}}* X$.
\end{definition}

When $X_{n-1}=\emptyset$, the complex $\lau C {\ov{p}}* X$ is the usual singular  chain complex $\hiru C* X$.

\subsection{Relative intersection homology} 

Let $(X,\ov p)$ be a perverse filtered space and   $Y \subset X$ be a subspace endowed with the induced filttration and perversity.
The natural inclusion $Y \hookrightarrow X$ gives the following exact sequence,
\begin{equation}\label{sucesasta2}
\xymatrix@R=.1cm{ 
 0 \ar[r] 
 & 
\ar@{^{(}->}[r]
 \lau C   {\ov p} * Y 
 & 
   \lau C   {\ov p} * X  \ar[r] 
   &  
\displaystyle \frac{  \lau C   {\ov p} * X } {\lau C   {\ov p} * Y  }=   \lau C {\ov p}*{X,Y}  \ar[r]
 & 0,
 }
 \end{equation}
defining the relative singular intersection complex $ \lau C {\ov p}*{X,Y}$. Its homology is 
the \emph{relative  singular intersection homology} denoted  
 $\lau H{\ov{p}}* {X,Y}$.

\begin{proposition}\label{SimplRelBis} 
Let  $(K,\ov p)$ be a perverse  full  filtered simplicial complex. The inclusion map induces the isomorphism,
$$
\bigoplus_{\beta \in \cB(K)}   \lau H  {\ov p} *{| \beta * L_\beta|,  | \cL(\beta * L_\beta)| }  
\xrightarrow{\cong}
\lau H {\ov p} *{ |K|,|\cL(K)|}.
$$
\end{proposition}

\noindent \emph{Proof.}
Recall $\cL(\beta * L_\beta)=\partial \beta * L_\beta$. The proof  is divided into two steps.

\medskip

\emph{ Step 1: Thickening}. 
The barycenter of $\beta \in \cB(K )$ is denoted  $b_\beta$. We have the equality
$
 |\beta * L_\beta|= |b_\beta * \partial \beta * L_\beta|
$,
where $\partial \beta=\emptyset$ if $\dim \beta=0$.

   \begin{center}
    \includegraphics[width=0.3\textwidth]{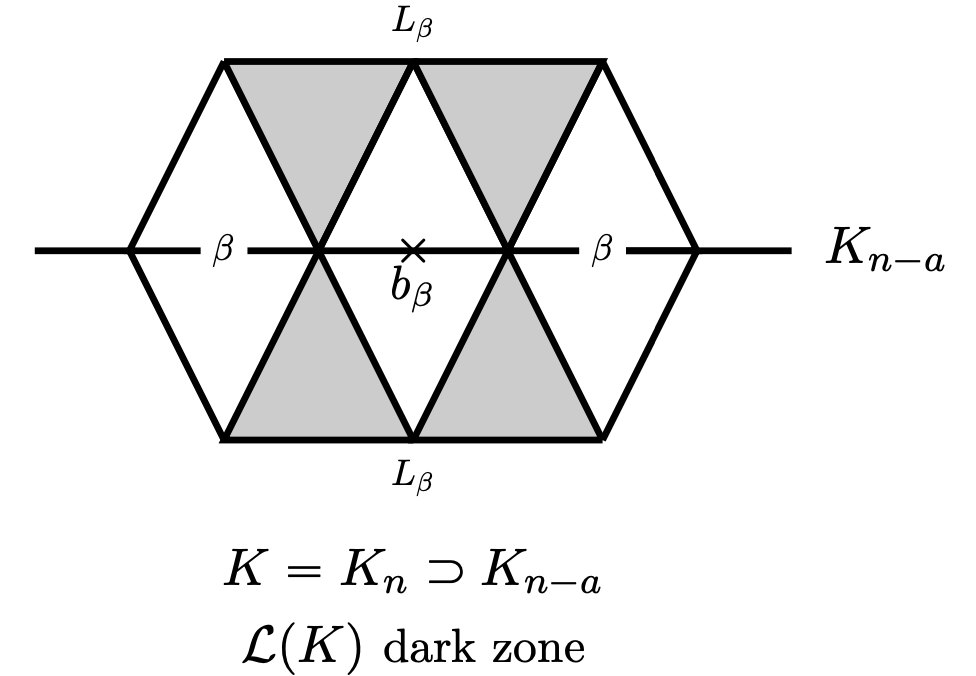}
    \end{center}

\noindent
 So, any point of $x \in |\beta * L_\beta|$ can be written as $x = (1-t)  b_\beta +t a$, where $t\in [0,1]$ and $a \in | \partial \beta * L_\beta|$. 
This writing is unique when $x \ne b_\beta$ (i.e., $t\ne 0$). 
Notice that the assignment $(1-t)  b_\beta +t c \mapsto (c, t)$ 
induces the homeomorphism
\begin{equation}\label{13}
|\beta * L_\beta| \menos \{ b_\beta\} \cong |\partial \beta *L_\beta| \times ]0,1].
\end{equation}

\noindent Under this homeomorphism, the filtration on the subset $|\beta * L_\beta| \menos \{ b_\beta\} $ becomes the product filtration,
with the trivial filtration on $]0,1]$.
Inspired by Subsection~\ref{Res}, we define the open subset
$$
W = |K | \menos \bigcup_{\beta \in \cB(K )} \{b_{\beta}\} = |\cL (K )| \cup  \bigcup_{\beta \in \cB(K )} \left(|\beta * L_\beta |\menos \{b_{\beta}\}\right).
$$
Associated to this open subset, we  define  a map,
$
F \colon W \times [0,1] \to W$
by 
$$
F(x,s) =
\left\{
\begin{array}{ll}
x & \hbox{if } x \in |\cL(K )|,\\
s  x + (1-s)  y &\hbox{if } x= (1-t)  b_\beta +t y \in |\beta * L_\beta | \menos \{b_{\beta}\}.
\end{array}
\right.
$$

Let us specify the restriction of $F$ to $|\beta * L_{\beta}|\menos \{b_{\beta}\}\times [0,1]$, with $\beta \in \cB(K)$.
By definition, we have 
$F((1-t)b_{\beta}+ty,s)=s(1-t)b_{\beta}+(1+st-s)y$.
Composing it with the homeomorphism \eqref{13}, we get a map, still denoted $F$,
from $|\partial \beta *L_{\beta}|\times ]0,1]\times [0,1]$ to $|\partial \beta *L_{\beta}|\times ]0,1]$, defined by
$F(y,t,s)=(y,1+st-s)$. This is the identity on the factor $|\partial \beta *L_{\beta}|$.
Combined with the fact that $F$ is the identity on the factor $\cL(K)$, we conclude that $F$ is a stratified homotopy in the sense
of \cite[Definition 4.1.9]{LibroGreg}.

Let us notice that  $F(-,1)$ is the identity on $W$.
The map $F(-,0)$ is the identity  on $|\cL(K )|$ by construction, and sends $x\notin |\cL(K )|$
on $y\in |\partial \beta\ast L_{\beta}|\subset |\cL(K )|$. Thus, $F(-,0)$ gives a map $\nu\colon W\to |\cL(K )|$.
If we denote  $j\colon |\cL(K ) |\hookrightarrow W$ the inclusion we have  $\nu\circ j = \id _{\cL(K )}$.
On the other hand, $F$ is a stratified homotopy between $F(-,0)=j\circ \nu$ and $F(-,1)=\id_{W}$. Therefore
(\cite[Proposition 4.1.10]{LibroGreg}), the map  $j\circ \nu$ induces the identity map in homology.

In short,  the inclusion $j$ induces an  isomorphism $\lau H  {\ov p} *{ W}\cong \lau H  {\ov p} *{|\cL(K )|}$ 
and therefore an  isomorphism
$$
\lau H  {\ov p} * {|K |, W} 
\cong
\lau H  {\ov p} *{|K |,|\cL (K )|}.
$$

\medskip

\emph{Step 2: Excision.}  By excision (see \cite[Corollary 4.4.18]{LibroGreg}),  we get an isomorphism,
$$
\lau H  {\ov p} *{|K |, W} 
\cong
 \lau H  {\ov p} *{|K | \menos |\cL(K )|, W \menos |\cL(K )|}.
$$
From \subsecref{Res},  we have the disjoint unions
\begin{equation}\label{union}
|K | \menos  |\cL(K )|  = \bigsqcup_{\beta \in \cB(K )} |\beta * L_\beta| \menos |\partial \beta *L_\beta| \ \ \ \hbox{ and } \ \ \ 
W\menos |\cL(K)|  = \bigsqcup_{\beta \in \cB(K )} |\beta * L_\beta| \menos (\{ b_\beta \} \cup |\partial \beta *L_\beta|) .
\end{equation}
This implies
$$
 \lau H  {\ov p} *{|K | \menos |\cL(K )|, W \menos |\cL(K )|} 
\cong
 \bigoplus_{\beta \in \cB(K )} \lau H  {\ov p} *{|\beta * L_\beta| \menos |\partial \beta *L_\beta|, 
 |\beta * L_\beta| \menos (\{ b_\beta \} \cup |\partial \beta *L_\beta| }. 
$$%
An excision relatively to the closed subset $|\partial\beta * L_\beta|$ gives
$$
\lau H  {\ov p}* {|\beta * L_\beta| \menos |\partial \beta *L_\beta|, 
 |\beta * L_\beta| \menos (\{ b_\beta \} \cup |\partial \beta *L_\beta \|)}
  \cong \lau H {\ov p} *{|\beta * L_\beta|,|\beta *L_\beta| \menos  \{ b_\beta \} }. 
$$
Finally,  
from \eqref{13}, we may replace 
$|\beta * L_\beta| \menos \{ b_\beta\}$ by $|\partial \beta *L_\beta| \times ]0,1]$ and obtain
$$
\lau H  {\ov p} * {|\beta * L_\beta|,|\beta *L_\beta| \menos  \{ b_\beta \} } \cong
\lau H  {\ov p}* {|\beta * L_\beta|,|\partial \beta *L_\beta| },
$$
which gives the claim.
\hfill $\clubsuit$

\subsection{Intersection homology of the join} 
As in Subsection~\ref{subsec:joinsimplicial}, we use the dual perversity. The following determination meets the same pattern as in \propref{CalculoJoin}.

\begin{proposition}\label{CalculoJoinBis}
Consider a perverse full filtered simplicial complex $(K,\ov p)$.
Let  $\beta \in \cB(K)$ be a clot such that the stratum $Q \in \cS_K$ containing $\beta$ is a singular stratum. 
We have
$$
\lau H  {\ov p} i {|\beta * L_\beta|}
=
\left\{
\begin{array}{cl}
\lau H  {\ov p} {i} { | L_\beta|}
&
\hspace{1cm}  \hbox{if } \
  i\leq D\ov p(Q),
\\[.2cm]
0&
\hspace{1cm} \hbox{if } 
i >D\ov p(Q),\,i\neq 0,
\\[.2cm]

{\sf R}
&
\hspace{1cm} \hbox{if } 
i=0 
\hbox{ and } D\ov p(Q)<-1,
\\[.2cm]
{\sf R}
&
\hspace{1cm} \hbox{if } 
i=0, \, 
D\ov p(Q)=-1
 \hbox{ and } 
\lau H {\ov p} 0  {|L_\beta|} \ne 0,
 \\[.2cm]
0
&
\hspace{1cm}  \hbox{if } i=0,\,
D\ov p(Q)= -1  \hbox{ and } \lau H {\ov p} 0 { |L_\beta |}=  0.
\end{array}
\right.
$$
The first isomorphism is given by the inclusion $|L_\beta| \hookrightarrow |\beta * L_\beta|$.
On the third  and fourth lines, a generator of $\sf R$ is any point in $\beta$ or any
$\ov{p}$-allowable point in $L_{\beta}$, respectively.
\end{proposition}

\begin{proof} 
We write $q=\dim_v  Q$.
Since $Q$ is singular, we  have  $D\ov p(Q) = n-q-2 - \ov p(Q)$.
The case $\dim \beta  =0$ is given by the classical cone formula, see \cite[Theorem 4.2.1]{LibroGreg}. 
When $\dim \beta  >0$ we consider $ \eta = \langle v_0 \rangle $ the 0-simplex given by the first vertex $v_0$ of $\beta $. It suffices to prove that the inclusion $ \eta \hookrightarrow \beta $ induces the isomorphism  
$\lau H {\ov p} * {| \eta*L_{\beta}|} \cong \lau H {\ov p} *{|\beta * L_\beta|}$. 
Consider a simplicial homotopy  $F \colon \beta  \times [0,1] \to \beta $ between the identity on $\beta $ and the constant map $\beta  \to  \eta$. 
The realization map, still denoted  $F \colon |\beta * L_\beta|\times [0,1] \to |\beta * L_\beta|$,  is a homotopy between the identity on $|\beta * L_\beta|$ 
and the map $\xi \colon |\beta * L_\beta| \to | \eta *L_\beta|$ given by $v*x \mapsto v_0*x$.

From \cite[Proposition 4.1.10]{LibroGreg}, we get that both maps $F(-,0) ,F(-,1)\colon |\beta * L_\beta| \to |\beta * L_\beta|$ induce the same morphism 
in $\tres H{\ov p}* $-homology. Since $F(-,0)$ is the identity, 
then $(F(-,1))_* \colon \lau H  {\ov p} *{|\beta * L_\beta|} \to \lau H  {\ov p}* {| \eta*L_\beta|}$ is the identity.

Finally, since the identity map  is equal to the composition $ | \eta * L_\beta|  \hookrightarrow | \beta * L_\beta| \xrightarrow{\xi} | \eta *L_\beta|$ and the map $F(-,1)$
to the composition $  |\beta * L_\beta|    \xrightarrow{\xi} | \eta *L_\beta|  \hookrightarrow |\beta * L_\beta|$, 
we conclude that the inclusion $ | \eta * L_\beta|  \hookrightarrow |\beta * L_\beta|$ induces an isomorphism $ \lau H {\ov p} * {| \eta*L_\beta|}  \cong \lau H {\ov p} *{|\beta * L_\beta|} $.
\end{proof}

When $Q$ is a regular stratum we have $L_\beta = \emptyset$, $\lau H {\ov p} i{|\beta *L_\beta|} =\lau H {\ov p} i {|\beta|}  =  \hiru H i{| \beta |}= 0$ if $i>0$
and $\lau H {\ov p} 0{|\beta *L_\beta|} =\lau H {\ov p} 0{|\beta|} =\hiru H 0 {| \beta |}=\sf R$.

\section{Simplicial versus Singular}

\begin{quote}
Let $(K,\ov{p})$ be a perverse filtered simplicial complex.
After a reminder of the canonical inclusion map, 
$
\iota \colon 
\lau C {\ov p} * { K} \to \lau C {\ov p} *{|K|}
$,
we prove in \thmref{DKBis} that the map $\iota$ induces an isomorphism in homology when $K$ is full.
The result is no longer true if we remove the hypothesis ``full,''
as it is pointed out in \cite[Remark 2]{MR833195}.
\end{quote}

Consider a perverse full filtered simplicial complex $(K,\ov p)$. 
With the ``well ordering theorem'' applied inductively to $K_{0}$, $K_{1}\menos K_{0}$, and so on, we can assume that the set of vertices of $K$ is provided
with an order $\leq$ that restricts to a total order on each simplex and verifies
\begin{equation}\label{order2}
v \leq w \hbox{ and } w \in K_k \Rightarrow v \in K_k.
\end{equation}

Let $\sigma \in K$ be an oriented simplex whose vertices are $\{v_0, \ldots,v_i\}$ with $v_0 < v_1 < \dots <v_i$. 
If $\sigma = \langle v_0, \ldots, v_i\rangle$, we say that $\sigma$ is an \emph{ordered simplex}. 
Taking in account the identification made in the definition of oriented simplices, 
we notice that $-\sigma$ is an ordered simplex if $\sigma$ is not and vice-versa. 
So, the chain complex of oriented simplices $\hiru C*K$ is generated by the ordered simplices.
Associated to such simplicial simplex, we have the singular simplex $\iota (\sigma) \colon \Delta \to |K|$ defined by:
\begin{equation}\label{iota}
\iota (\sigma) \left( \sum_{j=0}^i t_j  a_j \right) =  \sum_{j=0}^i t_j v_j,
\end{equation}
 where $\Delta=\langle a_{0},\dots,a_i\rangle$ is the standard simplex.
It is well known that 
$\iota (\sigma) \colon \Delta \to |K|$ is a linear map
and that
$\iota  \colon \hiru C*K \to \hiru C* {|K|}$ is a chain map.

\begin{proposition}\label{def:rho}
Let   $(K,\ov p)$ be a perverse  full filtered simplicial complex of virtual dimension~$n$.
The map $\iota  \colon \lau C{\ov p} * K \to \lau C{\ov p} * {|K|}$, associated to an order verifying \eqref{order2}, 
is a chain map.
\end{proposition}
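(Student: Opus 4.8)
The plan is to split the statement into its two constituent requirements: that $\iota$ carries $\lau C{\ov p}* K$ into $\lau C{\ov p}* {|K|}$, i.e. sends $\ov p$-intersection chains to $\ov p$-intersection chains, and that it commutes with the boundary $\partial$. I would treat the boundary compatibility first, since it is purely the classical comparison between ordered simplicial chains and singular chains and uses nothing about the perversity.

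For an ordered simplex $\sigma = \langle v_0, \ldots, v_i\rangle$, the singular simplex $\iota(\sigma)$ defined in \eqref{iota} is the affine map carrying the standard simplex $\Delta$ onto $\sigma$ and sending the vertex $a_j$ to $v_j$. I would check that the $j$-th face operator of the singular simplex agrees with the image of the $j$-th simplicial face: precisely, $\iota(\sigma)\circ\delta_j$ and $\iota(\langle v_0, \ldots, \widehat{v_j}, \ldots, v_i\rangle)$ are both the affine parametrization, in the induced order, of the face of $\sigma$ opposite $v_j$, hence equal. Summing with the signs $(-1)^j$ then yields $\partial \iota(\sigma) = \iota(\partial \sigma)$, and extending linearly (recalling that $\hiru C* K$ is generated by ordered simplices and that $\iota(-\sigma) = -\iota(\sigma)$) shows that $\iota$ is a chain map on ordinary chains. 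This step works for any total order on the vertices; condition \eqref{order2} is not needed here, but it is what will later let $\iota$ respect the canonical decomposition.

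The real content is the first requirement, and the key is to compare the perverse degrees. I would prove that for every simplex $\sigma\in K$ and every singular stratum $S\in\cS_K$,
$$
\|\iota(\sigma)\|_S = \dim \iota(\sigma)^{-1}(S) = \dim(\sigma\cap S) = \|\sigma\|_S.
$$
Indeed $\iota(\sigma)$ is an affine homeomorphism of $\Delta$ onto $\sigma$ that carries the open face of $\Delta$ spanned by $\{a_{j_0}, \ldots, a_{j_k}\}$ onto the open face of $\sigma$ spanned by $\{v_{j_0}, \ldots, v_{j_k}\}$; this sets up a dimension-preserving bijection between the open faces of $\Delta$ and those of $\sigma$. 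Since $\sigma\cap S$ is a union of open faces of $\sigma$ (as already noted after \eqref{defsimpl}), its preimage $\iota(\sigma)^{-1}(S)$ is the corresponding union of open faces of $\Delta$. The point to reconcile is the two notions of \emph{dimension}: the singular degree is the least $s$ with $\iota(\sigma)^{-1}(S)$ contained in the $s$-skeleton of $\Delta$, and I would argue this least $s$ is exactly the top dimension of the open faces occurring, which under the bijection equals $\dim(\sigma\cap S)$. With the displayed equality in hand, the allowability inequalities \eqref{defsimpl} and \eqref{equa:allowable} coincide term by term, so $\sigma$ is $\ov p$-allowable if and only if $\iota(\sigma)$ is.

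Finally I would assemble the two pieces. If $c\in \lau C{\ov p}* K$ is a simplicial intersection chain, then $c$ and $\partial c$ are allowable, hence by the perverse-degree equality $\iota(c)$ and $\iota(\partial c) = \partial\iota(c)$ are allowable singular chains, so $\iota(c)$ is a singular intersection chain. Thus $\iota$ restricts to a map $\lau C{\ov p}* K \to \lau C{\ov p}* {|K|}$ commuting with $\partial$, which is the assertion. I expect the only delicate step to be the identification of the perverse degrees, specifically matching the simplicial $\dim(\sigma\cap S)$ with the singular ``smallest containing skeleton'' dimension of the preimage; once the faces are tracked through the affine homeomorphism this becomes routine.
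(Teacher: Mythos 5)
Your proof is correct, and its skeleton is the same as the paper's: quote/verify the classical fact that $\iota$ commutes with $\partial$ on ordinary chains, then show that perverse degrees are preserved, so that allowable (hence intersection) chains are sent to allowable (intersection) chains. The difference lies in how the central equality $\|\iota(\sigma)\|_S=\|\sigma\|_S$ is obtained. The paper uses fullness and the order \eqref{order2}: it writes the canonical decomposition $\sigma\cap|K_\ell|=\sigma_0*\cdots*\sigma_\ell$, deduces that $\iota(\sigma)^{-1}(|K_\ell|)=\Delta_0*\cdots*\Delta_\ell$ is a join of faces of $\Delta$, and compares the dimensions of these joins. You argue instead that the affine homeomorphism $\iota(\sigma)\colon\Delta\to\sigma$ induces a dimension-preserving bijection between open faces, that $\sigma\cap S$ is a union of open faces, and that the singular convention (smallest $s$ with $\iota(\sigma)^{-1}(S)$ in the $s$-skeleton) returns exactly the maximal dimension of the open faces occurring. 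This is a modest but genuine gain: your argument uses neither fullness nor condition \eqref{order2}, so it proves the chain-map statement, with \eqref{defsimpl} and \eqref{equa:allowable} matched term by term, for an arbitrary filtered complex and an arbitrary vertex order; it also makes explicit the reconciliation of the two dimension conventions, a point the paper passes over silently. What the paper's route buys is what is needed afterwards: the canonical-decomposition computation is precisely what shows that each $\iota(\sigma)$ is a filtered singular simplex compatible with the join decomposition, hence that $\simp K$ is a sub $\Delta$-set of $\singf |K|$, the setting required for the blown-up cochain comparison of Section~4; your argument by itself does not deliver that, as you yourself note.
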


\begin{proof}
Let $\sigma = \langle v_0, \ldots, v_i\rangle\in K$, with $v_0 < v_1 < \dots <v_i$. It suffices to prove
 $\|\iota (\sigma)\|_S = \|\sigma\|_S$ for each singular stratum $S \in \cS_{|K|}$ verifying  
 $S \cap  \iota (\sigma)(\Delta) =S\cap \sigma (\Delta) \ne \emptyset$.
 As we observed in Subsection~\ref{CaDe},
the canonical decomposition of $\sigma$ is given by
\begin{equation}\label{caraS}
\sigma \cap |K_\ell| = \sigma_0* \dots * \sigma_\ell,
\end{equation}
for each $\ell \in \{0,\ldots,n\}$.
 The sets $\Delta_i = \iota (\sigma)^{-1}(\sigma_i)$  are empty-sets or  faces of $\Delta$. We have
$$
\iota (\sigma)^{-1}(|K_\ell|) = \iota (\sigma)^{-1}(\sigma \cap |K_\ell|) = \iota (\sigma)^{-1}( \sigma_0* \dots * \sigma_\ell) = \Delta_0 * \dots * \Delta_\ell.
$$
If $j =\dim_v  S$, the stratum $S$ is the only $j$-dimensional stratum that meets $\sigma$ and we deduce the claim from
\begin{eqnarray*}
\|\sigma\|_S 
&=& \dim (\sigma \cap S) =
  \dim (\sigma \cap |K_j|\menos |K_{j-1}|) = \dim (\sigma \cap |K_j|)\\
&=& \dim (\sigma_0 * \dots * \sigma_j) =\dim (\Delta_0 * \dots * \Delta_j) = \|\iota (\sigma)\|_S.
\end{eqnarray*}
\end{proof}

The next statement is the existence of  an isomorphism  between the singular and the simplicial intersection homologies, for a full complex. 
This result was proven between the PL and the singular intersection homologies by M. Goresky and R. MacPherson in the Appendix of \cite{MR833195}.
The key point of their proof is taken up in the proof of \propref{prop:Ksub}  below.
The isomorphism between the singular and the PL intersection homologies is set up by H. King in \cite{MR800845} for a CS-set.
 Let us also mention that these two isomorphisms are taken over with detail and comments in 
 \cite[Sections 3.3 and 5.4]{LibroGreg}. 
 So, in the literature, the isomorphism between  the singular and the simplicial intersection homologies
 is established  by using the PL intersection homology as an intermediate.
 Here, the proof comes from a direct comparison between these two homologies and does not need a restriction to CS sets.

\begin{theorem}\label{DKBis}
Let $(K,\ov p)$ be a perverse full filtered simplicial complex, with an order on the set of vertices verifying  \eqref{order2}. 
Then, the associated inclusion,  $\iota  \colon \lau C{\ov p} * K \to \lau C{\ov p} * {|K|}$,  induces the isomorphism
$$\lau H {\ov p} * {K} {\cong}  \lau H  {\ov p} *{|K| }.
$$
\end{theorem}
\begin{proof}
We proceed in two steps.\\
\emph{First Step: we suppose $K=K^{(\ell)}$ for some $\ell \in \N$}.  

Given a subcomplex $L \subset K$ we also have $L=L^{(\ell)}$. If $\comp L =(a_L,b_L)$ then $b_L\leq  \dim L< \infty$ and therefore $\comp \cL(L) < \comp L$ (cf. Subsection~\ref{Res}).
We use an induction on the complexity $(a,b)$ of $K$.
If $a=b=0$ then $K=K_{n}$ is a discrete family of 0-dimensional  simplices. So,  $\lau C {\ov p} * K = \lau C {\ov p} * {|K|}$ and we get the claim.
Let us suppose $(a,b)>(0,0)$ and consider the following commutative diagram defining the relative homology,
$$
\xymatrix{
0 \ar[r] & 
\lau C {\ov p} * {\cL(K)} 
\ar[d]  \ar@{^{(}->}[r]&
\lau C {\ov p} * {K} 
  \ar[r] \ar[d]  &\lau C {\ov p} * {K , \cL(K)}  \ar[d] \ar[r]& 0\\
0 \ar[r] & 
 \lau C   {\ov p} *{|\cL(K)|}
 \ar@{^{(}->}[r]&
\lau C  {\ov p} *{|K |}
   \ar[r]
   &\lau C   {\ov p}* {|K|,|\cL(K)|}
    \ar[r]& 0,
}
$$
where the vertical maps are induced by the inclusion map $\iota$ (cf. \eqref{sucesasta1}, \eqref{sucesasta2}).
From the induction hypothesis,
we know that the left vertical arrow is a quasi-isomorphism.
So, it suffices to prove that the  right vertical arrow is a quasi-isomorphism. 
Using Propositions \ref{SimplRel}  and \ref{SimplRelBis},  this assertion
is equivalent to the fact that, for each clot   $\beta  \in \cB(K )$,
 the following map is a quasi-isomorphism
$$
  \lau C  {\ov p} * {\beta  * L_\beta  ,\cL(\beta  * L_\beta }
 \to
  \lau C {\ov p} * {|\beta  * L_\beta | , |\cL(\beta  * L_\beta |}.
$$
Again we combine the short exact sequences \eqref{sucesasta1} and \eqref{sucesasta2}  in a commutative diagram, 
$$
\xymatrix{
0 \ar[r] &
   \lau C {\ov p} * {\cL(\beta  * L_\beta )}   \ar@{^{(}->}[r]  \ar[d]  &
 \lau C {\ov p} * {\beta  * L_\beta }  \ar[d] \ar[r]  &
  \lau C {\ov p} * { \beta  * L_\beta , \cL(\beta  * L_\beta )}  \ar[d] \ar[r]& 0\\
0 \ar[r] &  
 \lau C  {\ov p} * {|\cL(\beta  * L_\beta |} \ar@{^{(}->}[r]&
\lau C {\ov p} * {|\beta  * L_\beta |} \ar[r] &
\lau C {\ov p} *{|\beta  * L_\beta  |, |\cL(\beta  * L_\beta )|} \ar[r]& 0,
}
$$
where the vertical maps are induced by the inclusion map $\iota$.
As noticed at the beginning of this proof, we know that
$
 \comp \cL(\beta * L_\beta) < \comp (\beta * L_\beta) \leq \comp K
$.
With  the induction hypothesis, the 
left vertical arrow is a quasi-isomorphism. 
It remains to prove that the middle arrow is a quasi-isomorphism.
For doing that, we distinguish two cases.

$\bullet$ \emph{The clot $\beta$ is included in a regular stratum}. Here, we have $L_\beta=\emptyset$ and the middle arrow becomes
$\iota_* \colon 
 \lau C {} * {\beta   }  \to
\lau C {} * {|\beta  |}.$
The claim comes from the classical situation. 

$\bullet$ \emph{The clot $\beta$ is included in a singular stratum}. 
  From Subsection~\ref{links}, we  have $\comp L_\beta < \comp K$. 
From Propositions \ref{CalculoJoin}, \ref{CalculoJoinBis} and   the induction hypothesis,  we deduce 
that the middle vertical arrow is a quasi-isomorphism. 

\medskip
\emph{Second Step: the general case.} We consider the induced map 
$\iota_* \colon \lau H {\ov p} * {K} \TO \lau H  {\ov p} *{|K| }$ and decompose the proof in two points.

 $\bullet$\emph{Claim: $\iota_*$ is an epimorphism.} 
Consider a cycle $c\in \lau C  {\ov p} *{|K| }$. 
The chain $c$ being a finite sum,  there exists an integer $\ell \in \N$ with 
$c\in \lau C  {\ov p} *{ |K^{(\ell)}|}$.
 Applying the first step of the proof, there exist $f \in \lau C  {\ov p} *{|K^{ (\ell) }|}$  and $e \in \lau C  {\ov p} *{K^{ (\ell) }}$ with $\partial e =0$ and  $\iota(e) = c + \partial f$. 
 Since  $f \in \lau C  {\ov p} *{|K|}$  and $e \in \lau C  {\ov p} *{K}$, we get  $\iota_*([e]) =[c]$ and  the claim.

$\bullet$ \emph{Claim: $\iota_*$ is a monomorphism.} 
Consider a cycle $e\in \lau C  {\ov p} *{K }$ and a chain $f \in  \lau C  {\ov p} *{|K| }$ with $\iota(e) = \partial f$.
Since $e$  and  $f$ are  finite sums,  there exists an integer $\ell \in \N$ with 
$e\in \lau C  {\ov p} *{ K^{(\ell)}}$ and $f\in \lau C  {\ov p} *{ |K^{(\ell)}|}$.
Applying the first step of the proof, there exists $g \in \lau C  {\ov p} *{K^{ (\ell) }}$ with  $e=  \partial g$. Since  $g\in \lau C  {\ov p} *{K }$, we get $[e]=0$ and the claim.
\end{proof}

The intersection cohomology $\lau H  *{\ov p}  {-}$ of \cite{MR3046315} is defined as the cohomology of  the dual complex $\Hom (\lau C {\ov p}*{-};R)$. 
As the Universal Coefficient Theorem is true in this context (\cite[Theorem 7.14]{LibroGreg}),  the singular and simplicial   intersection cohomologies are isomorphic
for a full filtered simplicial complex.

\section{Blown-up intersection cohomologies}

 \begin{quote} 
 
 Intersection homology and cohomology do not verify the Poincaré duality for any ring of coefficient (see \cite{MR699009,MR3046315}). 
 The blown-up intersection cohomology has been introduced in order to recover this property through a cap product with a fundamental class (see \cite{CST7}).
In this section, we extend the paradigm (simplicial, singular) to  the blown-up intersection cohomology.
After their definitions, we show in Propositions \eqref{prop:restr3}, \eqref{RelativoBiss} and \eqref{JoinMain} that the process  used for intersection homology is still valid here.
Finally, we prove the existence of an isomorphism between the simplicial and singular  blown-up intersection cohomologies in \thmref{DKTris},  for a full filtered simplicial complex.
\end{quote}

\begin{definition}\label{def:filteredsimplex}
Let $X$ be a filtered space of dimension~$n$.
 A \emph{filtered singular simplex} of $X$  is a continuous map, $\sigma\colon\Delta\to X$, 
 such that $\sigma^{-1}(X_{i})  $ is a face of $\Delta$, for $i\in \{0,\dots,n\}$, or the empty set.
  The family of these simplices is denoted  $\singf X$.
A filtered singular simplex $\sigma$ is  \emph{regular} if $\sigma^{-1}(X_{n}\menos X_{n-1})\ne \emptyset$. 
The subfamily of regular simplices is denoted  $\singf_+X$.
\end{definition}

  \begin{definition} \label{def:boumsimplex}
  A \emph{filtered simplex} is a decomposition $\Delta = \Delta_0 * \dots * \Delta_n$, where each $\Delta_i$ is a face of the standard simplex $\Delta$  or the empty set. If  $\Delta_n\ne \emptyset$ we say that the simplex is \emph{regular}.
The \emph{blow up} of a regular simplex is the prism
$
\tDelta = \tc \Delta_0 \times \dots \times \tc \Delta_{n-1} \times \Delta_n.
$
 \end{definition}

The domain of a filtered singular simplex, $\sigma\colon \Delta\to X$,  inherits a decomposition
$$\Delta_\sigma=\Delta_{0}\ast\Delta_{1}\ast\dots\ast\Delta_{n},$$
with $\sigma^{-1}(X_{i})=\Delta_{0}\ast\dots\ast\Delta_{i}$, and $\Delta_{i}$ possibly empty,
for each~$i \in \{0, \dots, n\}$.
We call it the  \emph{$\sigma$-decomposition of $\Delta$.}  
This simplex $\sigma$  is said regular if   $\Delta_n\ne \emptyset$. Notice that each filtered singular simplex $\sigma \colon \Delta \to X$ (resp. regular) induces a filtered simplex (resp. regular) on $\Delta$.
The family $\singf X$ is a $\Delta$-set in the sense of Rourke and Sanderson (\cite{MR300281})
for the usual face maps $\delta_{i}$.
The blown-up construction, we describe above,  is the key point in the construction of our cohomology. Let us begin
with the local situation.

  \begin{definition}\label{43}
A \emph{face} of $\tDelta$ is represented by
$
(F,\varepsilon)=(F_{0},\varepsilon_{0})\times \dots \times  
(F_{n},\varepsilon_{n}),
$
with $\varepsilon_{i}\in \{0,1\}$,  $\varepsilon_n=0$ and  $F_{i}$  a face of $\Delta_{i}$ for $i\in\{0,\dots,n\}$, 
or the empty set with $\varepsilon_{i}=1$. 
\end{definition}
More precisely, we have:
\begin{itemize}
\item $\varepsilon_{i}=0$ and $F_{i} \triangleleft  \Delta_i$, that is, $(F_i,0)= F_i$ is a face of $\Delta_i$, or
\item $\varepsilon_{i}=1$ and  $F_{i} \triangleleft  \Delta_i$, that is, $(F_i,1)= \tc F_i$ is the cone of a face  of $\Delta_i$, or
\item  $\varepsilon_{i}=1$ and $F_{i}=\emptyset$, that is, $(\emptyset,1)$ is the apex of the cone $\tc \Delta_i$, 
called the \emph{virtual apex}. 
\end{itemize}
Recall  $\varepsilon_n=0$. 
We also set
$$
|(F,\varepsilon)|_{>j} = \dim (F_{j+1},\varepsilon_{j+1}) + \dots + \dim (F_{n},\varepsilon_{n}).
$$
Let us denote  $\Hiru N*\Delta$ the complex of simplicial cochains defined on the standard simplex $ \Delta$, with coefficients in  $\sf R$. 

\begin{definition}
The \emph{blown-up complex} of a regular simplex  $\Delta = \Delta_0* \dots * \Delta_n$ is the tensor product
$$
\tN^*(\Delta) = N^*(\tc \Delta_0) \otimes \dots \otimes N^*(\tc \Delta_{n-1}) \otimes N^*(\Delta_n).
$$
\end{definition}

Let $ \1_{(F_{i}, \varepsilon_ {i})} $ be the cochain on
$ \tc \Delta_ {i} $, taking the value $ 1$ on the simplex $(F_ {i} ,
\varepsilon_i)$ and $ 0$ on the other simplices of $ \tc \Delta_i$, for $i\in\{0, \ldots,n-1\}$. 
Similarly for $\1_{(F_n,\varepsilon_n)}$.
A basis of $\tN^*(\Delta)$  is given by the family
$$\1_{(F,\varepsilon)}=\1_{(F_{0},\varepsilon_{0})}\otimes\dots\otimes \1_{(F_{n},\varepsilon_{n})},$$
where $(F,\varepsilon)$ runs over the faces of $\tDelta$. 
Each element of this basis owns an extra degree, coming from the filtration and called perverse degree.

\begin{definition}\label{def:degrepervers}
Let $ \ell \in \{  1, \dots,  n\} $. %
The \emph{$\ell$-perverse degree of the cochain 
$ \1_{(F, \varepsilon)}\in \tN^*(\Delta)$}
is equal to
$$
\|\1_{(F,\varepsilon)}\|_{\ell}=\left\{
\begin{array}{ccl}
-\infty&\text{if}
&
\varepsilon_{n-\ell}=1,\\
|(F,\varepsilon)|_{ > n - \ell}
&\text{if}&
\varepsilon_{n-\ell}=0.
\end{array}\right.$$
The \emph{$\ell$-perverse degree } of
$\omega=\sum_ {\mu} \lambda_{\mu} \, \1_{(F_{\mu} \varepsilon_{\mu})} \in  \Hiru \tN  * {\Delta}$,
with each $ \lambda_ {\mu} \neq 0$, is equal to
 \begin{equation*}
\|\omega\|_{\ell}=\max_{\mu}\|\1_{(F_{\mu},\varepsilon_{\mu})}\|_{\ell}.
\end{equation*}
By convention, we set $ \| 0 \|_{\ell} = - \infty $.
\end{definition}

\begin{remarque}\label{Milagro}
Let us consider a face $(F,\varepsilon)$ of $\tDelta$ with $F_0 =\dots =F_{m-1} =\emptyset$ for some $m \in \{0, \ldots,n-1\}$.  
From the definition, we observe that the  perverse degrees $\|\1_{(F,\varepsilon)}\|_\ell $, for $\ell\in\{1, \ldots,n\}$, do not depend on the face $F_m$.
 \end{remarque}
 
Let $X$ be a filtered space and $\ttP$ be a subset of $\singf X$ 
stable by the face operators. 
The subfamily of its regular elements is denoted $\ttP_{+}$.
In \exemref{exam:simplicialyPL}, we detail two examples of subsets $\ttP$ of interest for this work.

\medskip
Let us define the blown-up cochain complex associated to $\ttP\subseteq \singf X$.
First, to any regular simplex, $\sigma\colon \Delta=\Delta_0\ast\dots\ast \Delta_n\to X$ in $\ttP$, 
we associate the cochain complex defined by 
$$\tres \tN*\sigma=\tN^*(\Delta)=
\Hiru N *{\tc\Delta_0}\otimes\dots\otimes
\Hiru  N *{\tc\Delta_{n-1}} \otimes \Hiru N *{\Delta_n}.$$
A face operator $\delta_{\ell}\colon  \nabla= \nabla_{0}\ast\dots\ast \nabla_{n} \to
 \Delta=\Delta_{0}\ast\dots\ast\Delta_{n}$
is  \emph{regular} if $ \nabla_n\ne \emptyset$. By restriction, 
such $\delta_{\ell}$ induces
 $$
 \delta_\ell^* \colon \Hiru N *{\tc\Delta_0}\otimes\dots\otimes
\Hiru  N *{\tc\Delta_{n-1}} \otimes \Hiru N *{\Delta_n} \to \Hiru N *{\tc \nabla_0}\otimes\dots\otimes
\Hiru  N *{\tc \nabla_{n-1}} \otimes \Hiru N *{ \nabla_n}.
$$

 \begin{definition}\label{def:thomwhitney}
 Let $X$ be a filtered space and $\ttP$ be a sub $\Delta$-set of $\singf X$.
 The \emph{blown-up complex,} $\Hiru \tN {*,\ttP} X$,
is the cochain complex 
 formed by the elements $ \omega $, associating to any $\sigma \in \ttP$ 
an element
 $\omega_{\sigma}\in\tres  \tN*\sigma$,  
so that 
 $
\delta_{\ell}^*(\omega_{\sigma})=\omega_{\sigma\circ\delta_{\ell}},
 $
for any  regular face operator,
 $\delta_{\ell}$, of $\ttP$.
 The differential of $\Hiru \tN {*,\ttP} X$ is defined by
 $(\delta\omega)_{\sigma}=\delta(\omega_{\sigma})$.
  The \emph{perverse degree of $\omega\in\Hiru \tN {*,\ttP} X$   along a singular stratum,} $S \in \cS_X$, is equal to
\begin{equation*}\label{equa:perversstrate}
\|\omega\|_{S}=\sup\left\{ \|\omega_{\sigma}\|_{\codim_v S}\mid \sigma\in \ttP_{+} \text{ with } \im \sigma \cap S\neq \emptyset\right\}.
\end{equation*}
We denote  $\|\omega\|\colon \cS_{X}\to \ov{\Z}$  the map associating to any singular stratum $ S $ of $ K  $ the element $\|\omega\|_{S}$ and 0 to any regular stratum.
 \end{definition}

\begin{definition}\label{def:admissible} 
 Let $\ov{p}$ be a perversity on a filtered space  $X$ and $\ttP$ be a sub $\Delta$-set of $\singf X$.
  A cochain $\omega\in  \Hiru \tN {*,\ttP} X$ is \emph{$\ov{p}$-allowable} if
 $
\|\omega\|\leq \ov p.
 $
A cochain $\omega$ is of \emph{$\ov p$-intersection} if $\omega$ and its coboundary, $\delta \omega$,
are $\ov p$-allowable. 

We denote  $\tN^{*,\ttP}_{\ov{p}}(X)$ 
the complex of  $\ov p$-intersection cochains and
 $ \IH_{\ov p}^{*,\ttP}(X)$  its homology, called the
 \emph{blown-up  intersection cohomology} of $X$ 
 with coefficients in~$\sf R$,
for the perversity $\ov p$. %
\end{definition}

\begin{example}\label{exam:simplicialyPL}
Let $X$ be a filtered space and $\ov{p}$ be a perversity on $X$.
In the sequel, we consider the two following cases of subsets $\ttP\subset \singf X$.
\begin{itemize}
\item[a)] If $\ttP=\singf X$, we recover the complex of blown-up $\ov p$-intersection cochains of $X$ 
and the blown-up  intersection cohomology 
defined in \cite{CST5}. We denote them $\lau \tN * {\ov{p}}X$ and $\lau \IH*{\ov{p}}X$. If there is an ambiguity
with the simplicial situation introduced in the next item, we will call them
\emph{the singular blown-up  intersection complex
and the singular blown-up  intersection cohomology.} 

\item[b)] Let $X=|K|$ be the geometric realization of a   full  filtered simplicial complex $K$.
The family of singular simplices induced by the simplices of $K$ is denoted  $\simp K$. 
 Since $K$ is full, this is a sub $\Delta$-set
of $\singf |K|$ and we can apply the previous process with $\ttP=\simp K$.
We denote $\lau \tN * {\ov p} K$ and $\lau \IH* {\ov{p}} K$
the associated complex and cohomology and call them \emph{the simplicial blown-up  intersection complex
and the simplicial blown-up  intersection cohomology.} 
\end{itemize}

In  \thmref{DKTris}, we prove the existence of an isomorphism between the singular and the simplicial blown-up intersection cohomologies,
$\lau \IH* {\ov{p}} K\cong \lau \IH* {\ov{p}} {|K|}$.
\end{example}

The next result allows the existence of relative blown-up intersection cohomologies.

\begin{proposition}\label{prop:restr3}
Let  $(K,\ov p)$ be a perverse full  filtered simplicial complex.
Then, the  two following restrictions,
\begin{itemize}
\item[a)] $\gamma \colon \lau \tN  *  {\ov p} {|K|}  \to \lau \tN  *  {\ov p} {|\cL(K)|}$, and
\item[b)] $\gamma \colon \lau \tN  *  {\ov p} {K}  \to \lau \tN  *  {\ov p} {\cL(K)}$, 
\end{itemize}
are onto maps.
\end{proposition}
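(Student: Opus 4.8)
The plan is to exhibit, in both the singular case (a) and the simplicial case (b), an explicit set–theoretic section of $\gamma$ given by \emph{extension by zero} at the level of blow-up cells, and then to check that this section preserves the $\ov p$-intersection condition. The two cases are formally identical: they differ only in whether $\ttP=\singf {|K|}$ or $\ttP=\simp K$, the combinatorics of the face operators and of the prisms $\tDelta$ being the same. Recall that a blown-up cochain $\omega$ is the datum, for each regular simplex $\sigma$, of a cochain $\omega_{\sigma}$ on the prism $\tDelta_{\sigma}$, i.e.\ of a coefficient $\omega_{\sigma}[(F,\varepsilon)]$ for each face $(F,\varepsilon)$, subject to the compatibilities $\delta_{\ell}^{*}\omega_{\sigma}=\omega_{\sigma\circ\delta_{\ell}}$.

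Given $\eta\in \lau \tN * {\ov p} {\cL(K)}$ (resp.\ $\lau \tN * {\ov p} {|\cL(K)|}$), I would define $\omega=E(\eta)$ cell by cell: to a face $(F,\varepsilon)$ of $\tDelta_{\sigma}$ associate the face $D=F_{0}*\cdots*F_{n}$ of $\Delta$ it is supported on; if the corresponding face $\sigma|_{D}$ lies in $\cL(K)$ (resp.\ has image in $|\cL(K)|$) set $\omega_{\sigma}[(F,\varepsilon)]=\eta_{\sigma|_{D}}[(F,\varepsilon)]$, and otherwise set it to $0$. The first step is to check that $E(\eta)$ is a genuine blown-up cochain and that $\gamma\circ E=\id$. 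Well-definedness rests on two points: a face of a simplex lying in $\cL(K)$ still lies in $\cL(K)$, so the two prescriptions never clash on cells shared by several simplices; and the assigned value depends only on the cell and on $\eta$, which is where fullness of $K$, hence of $\cL(K)$, is used, so that $\simp{\cL(K)}$ is a sub-$\Delta$-set of $\simp K$. That $\gamma E=\id$ is then immediate, and surjectivity of $\gamma$ reduces to proving that $E(\eta)$ lies in $\lau \tN * {\ov p} {-}$, i.e.\ that both $E(\eta)$ and $\delta E(\eta)$ are $\ov p$-allowable.

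The allowability of $\omega=E(\eta)$ itself is the easy half. It hinges on the fact that the perverse degree $\|\1_{(F,\varepsilon)}\|_{\ell}=|(F,\varepsilon)|_{>n-\ell}$ of a cell is \emph{intrinsic}, depending only on the cell and not on the ambient simplex. Hence for every $\sigma$ the nonzero cells of $\omega_{\sigma}$ are exactly those carried by faces $D$ with $\sigma|_{D}$ in $\cL(K)$, with the same perverse degrees they have inside $\tDelta_{\sigma|_{D}}$, so that $\|\omega_{\sigma}\|_{\ell}=\max_{D}\|\eta_{\sigma|_{D}}\|_{\ell}$. Moreover a cell contributes a finite $\ell$-perverse degree only when $\varepsilon_{n-\ell}=0$, which forces the face $D$ to meet the stratum $S_{n-\ell}$; since each stratum of $\cL(K)$ has a source in $K$ of equal codimension and perversity (cf.\ \secref{IFC} and \eqref{SE}), the bound $\|\eta\|\le\ov p$ transfers to $\|\omega\|_{S}\le\ov p(S)$ for every singular stratum $S$ of $K$.

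The main obstacle is the allowability of $\delta E(\eta)$, because the coboundary is not computed cell by cell and $E$ is \emph{not} a chain map: a coface of an $\cL(K)$-cell need no longer be supported on $\cL(K)$. Writing $\delta E(\eta)=E(\delta\eta)+D$, the term $E(\delta\eta)$ is controlled exactly as above, now using $\|\delta\eta\|\le\ov p$, and everything reduces to bounding $\|D\|_{S}$, where $D$ is supported on cofaces $(G,\delta')$ whose support face $\rho'$ lies outside $\cL(K)$ while some codimension-one face lies inside. Here I would invoke \propref{SimplRel}: such a $\rho'$ is forced to have the form $\beta*\alpha$ with $\beta\in\cB(K)$ a clot and $\alpha\in L_{\beta}$, so that $(G,\delta')$ lives in a join $\beta*L_{\beta}$ and the modified vertex is a vertex of $\beta$, sitting in a factor of index $\le n-a$ with $(a,b)=\comp K$. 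The perverse-degree formula then splits into three regimes according to $\ell=\codim_{v}S$: for $\ell>a$ the join filtration \eqref{DKBiss} carries no stratum of dimension $<n-a$, so $\rho'_{n-\ell}=\emptyset$, $\delta'_{n-\ell}=1$ and $\|\1_{(G,\delta')}\|_{\ell}=-\infty$; for $\ell<a$ the modified factor has index $\le n-a<n-\ell$, hence does not affect $|(G,\delta')|_{>n-\ell}$, which equals the degree of an $\cL(K)$-face already bounded by $\|\omega\|_{S}$. The delicate case is $\ell=a$, the clot stratum $Q$ itself, where opening a virtual apex into a genuine cone can raise the degree from $-\infty$ to a nonnegative value; the bound $\|D\|_{Q}\le\ov p(Q)$ must then be extracted from the allowability of $\delta\eta$ along $Q$ together with the clot inequality $\dim\alpha\ge\codim_{v}Q-\ov p(Q)-1$ of \lemref{allowsimp}. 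I expect this last case to be where the real work lies, and I would treat it by reinterpreting the offending coefficient of $D$ on $(G,\delta')$ as a value of $\delta\eta$ on a cell of the link $L_{\beta}$, reading off the estimate from $\|\delta\eta\|_{Q}\le\ov p(Q)$.
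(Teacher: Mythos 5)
Your construction of the cell-by-cell extension by zero, the verification that it is a section of $\gamma$, the bound on $\|E(\eta)\|$, and the reduction of everything to the perverse degree of the difference term $D=\delta E(\eta)-E(\delta\eta)$ are exactly the paper's argument, and your treatment of the regimes $\ell<a$ and $\ell>a$ is correct. The gap is the case $\ell=a$, which you explicitly leave open (``where the real work lies''): what you offer there is a plan, not a proof, and it rests on a false premise. A support-breaking coface can never ``open a virtual apex into a genuine cone''. Indeed, in the coboundary of $\1_{(F,\varepsilon)}$ each coface $(H,\tau)$ differs from $(F,\varepsilon)$ in a single factor, and in one of two ways: either one face is enlarged, $H_i\supsetneq F_i$ with $\tau_i=\varepsilon_i$ (this includes $(\emptyset,1)\mapsto(v,1)$, which keeps $\tau_i=1$), or one index is coned, $H_i=F_i$ with $(\varepsilon_i,\tau_i)=(0,1)$; the flip $\varepsilon_i=1\mapsto\tau_i=0$ never occurs. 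Coning does not change the support face $F_0\ast\dots\ast F_n$, so it cannot take the support out of $\cL(K)$; and since $K_{n-a-1}=\emptyset$ forces $F_0=\dots=F_{n-a-1}=\emptyset$, while enlargements at positions $j>n-a$ do not alter the intersection of the support with $|K_{n-a}|$, the only cells on which $D$ can be nonzero satisfy $\tau=\varepsilon$ and $H_i=F_i$ for all $i\neq n-a$, with $H_{n-a}\supsetneq F_{n-a}$.

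Once this is observed, your ``delicate case'' evaporates: the $a$-perverse degree of a cell is $-\infty$ when $\varepsilon_{n-a}=1$ and equals $|(F,\varepsilon)|_{>n-a}$ when $\varepsilon_{n-a}=0$, and neither datum involves the face $F_{n-a}$ itself, the sum running over indices strictly greater than $n-a$. Hence $\|\1_{(H,\tau)}\|_{\ell}=\|\1_{(F,\varepsilon)}\|_{\ell}$ for every $\ell$, including $\ell=a$ --- this is precisely \remref{Milagro}, which is the key point of the paper's proof. Consequently $\|D\|_{Q}\leq\|E(\eta)\|_{Q}\leq\ov p(Q)$: the estimate comes from the allowability of the extension itself (i.e.\ of $\eta$), not from that of $\delta\eta$, and no appeal to \lemref{allowsimp} is needed --- nor could that lemma be used as you suggest, since it concerns allowable simplicial \emph{chains}, whereas what must be bounded here is a cochain-level perverse degree, a statement about which cells carry nonzero coefficients. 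With this one observation inserted, your argument closes and coincides with the paper's proof; without it, the proof is incomplete at exactly the step you flagged.
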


\begin{proof}
Let $\comp K=(a,b)$. When $b=\infty$, we have $\cL(K)=K$ and the result is clear. Suppose now  $b<\infty$, which implies
(see Subsection~\ref{Res})
$\comp \cL(K)<\comp K$.

a) It suffices to prove  that the extension, $\eta \in\lau \tN * {} {|K|}$, by 0 of a cochain 
 $\omega \in \lau \tN*{\ov p} {|\cL(K)|}$  belongs to $\lau \tN * {\ov{p}} {|K|}$.  We clearly have $\|\eta\| \leq \|\omega\|$.
 Now, we want to bound $\|\delta_{|K|}\eta\|$. For that,  we write
 $$\delta_{|K|}\eta=\delta_{|\cL(K)|}\eta+(\delta_{|K|}-\delta_{|\cL(K)|})(\eta)$$
 and notice first that 
 $\|\delta_{|\cL(K)|}\eta\|\leq \|\delta_{|\cL(K)|}\omega\|$.
 As the perverse degree of a linear combination is the maximum of the perverse degrees of its terms  (see \defref{def:thomwhitney}), we are reduced to study 
 $\|(\delta_{|K|}-\delta_{|\cL(K)|})(\eta)\|$. 
 We claim that
 \begin{equation}\label{equa:todosing}
\|(\delta_{|K|}-\delta_{|\cL(K)|})(\eta)\| \leq   \|\omega\|,
 \end{equation}
 which gives the result. 
 Without loss of the generality, it suffices 
 to prove that
$$
\|\delta_{|\cL(K)|} \1_{(F,\varepsilon)}-\delta_{|K|} \1_{(F,\varepsilon)}\| \leq \|\1_{(F,\varepsilon)}\|,
$$
where  $\sigma \colon  \Delta \to |K|$ is a regular singular simplex   and $(F,\varepsilon)$ is a face of  the blown-up $\tDelta$
such that $\sigma(F)\subset |\cL(K)|$.

Since $K_{n-a-1}=\emptyset$ and $\comp\cL(K)<\comp K$, we have $F_0=\dots=F_{n-a-1}=\emptyset$  
and $\sigma(F_{n-a}) \subset |K^{(b-1)}_{n-a}| $.
An element of $\delta_{|\cL(K)|} \1_{(F,\varepsilon)}-\delta_{|K|} \1_{(F,\varepsilon)}$ is of the form
$
\pm\1_{(H,\tau)}
$
and we can suppose, without loss of generality, that $(H,\tau)$ is a face of $\tDelta$. This face verifies
$(F,\varepsilon) \triangleleft  (H,\tau)$,
$\dim (H,\tau) = \dim (F,\varepsilon)+1$, and
$\sigma (H) \not\subset |\cL(K)|$ (i.e., $\sigma( H_{n -a })\not\subset  K^{(b-1)}_{n-a}$).
 This implies  $H_{n-a}\ne F_{n-a}$,  $H_i=F_i$, for $i\ne n-a$, and $\tau =\varepsilon$. Since
$
\|\1_{(H,\tau)}\| = \|1_{(F,\varepsilon)}\|
$
(cf. \remref{Milagro}), we get the claim.

\smallskip

b) The proof of the singular situation is similar. 
\end{proof}

From \propref{prop:restr3}, we deduce the two  following exact sequences,
\begin{equation}\label{sucesasta}
\xymatrix@R=.2cm{
 0 \ar[r] & 
  \lau \tN {*} {\ov p}{|K|,|\cL(K)|} 
 \ar@{^{(}->}[r] & 
   \lau \tN  *  {\ov p}{|K|}  \ar[r]^-\gamma  
 \ar[r] &\lau \tN  *  {\ov p}{|\cL(K)|}  \ar[r] &  0, \\
 0 \ar[r] & 
  \lau \tN {*} {\ov p}{K,\cL(K)} 
 \ar@{^{(}->}[r] & 
   \lau \tN  *  {\ov p}{K}  \ar[r]^-\nu  
 \ar[r] &\lau \tN  *  {\ov p}{\cL(K)}  \ar[r] &  0. \\
 }
 \end{equation}
The homology of $\lau \tN {*} {\ov p}{|K|,|\cL(K)|}$ is isomorphic to the relative blown-up $\ov p$-intersection cohomology
  introduced in \cite{CST5}. We denote it $\crH^*_{\ov{p}}(|K|,\cL(|K|)$. The homology of the complex 
  $  \lau \tN {*} {\ov p}{K,\cL(K)}$ is denoted $  \lau \IH {*} {\ov p}{K,\cL(K)} $.
 In the next statement, we show that the relative intersection cohomology can be decomposed, 
 as does the relative homology of $(X^{(\ell)},X^{(\ell-1)})$ in the case of a CW-complex $X$.

\begin{proposition}\label{RelativoBiss} 
Let  $(K,\ov p)$ be a  perverse full filtered simplicial complex.
The restriction map induces the isomorphisms
\begin{itemize}
\item[i)] $\lau \IH{*}{\ov p}{|K|,|\cL(K)|} 
\xrightarrow{\cong}\displaystyle 
\prod_{\beta \in \cB(K )}   \lau \IH * {\ov p} {|\beta * L_\beta| ,|\cL(\beta * L_\beta)|}$, and
\item[]
\item[ii)] $\lau \IH{*}{\ov p}{K,\cL(K)} 
\xrightarrow{\cong}\displaystyle 
\prod_{\beta \in \cB(K )}   \lau \IH * {\ov p} {\beta * L_\beta ,\cL(\beta * L_\beta)}$.
\end{itemize}
\end{proposition}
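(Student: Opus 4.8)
The plan is to treat the two items by different means, reflecting the fact that a singular simplex of $|K|$ need not be confined to a single join whereas a simplicial simplex carries a unique clot. Both arguments start from the short exact sequences \eqref{sucesasta}, which, via \propref{prop:restr3}, identify $\lau \tN {*} {\ov p}{K,\cL(K)}$ and $\lau \tN {*} {\ov p}{|K|,|\cL(K)|}$ with the kernels of the restrictions $\nu$ and $\gamma$; a cochain in either kernel is precisely one that vanishes on $\cL(K)$, resp.\ $|\cL(K)|$.

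For item (ii) I would argue directly at the cochain level. Recall from \secref{Res} that $K = \cL(K) \cup \bigcup_{\beta \in \cB(K)} \beta * L_\beta$, with $\cL(K) \cap \bigcup_\beta \beta*L_\beta = \bigcup_\beta \partial\beta * L_\beta$ by \eqref{interLK}. Since $K_{n-a-1}=\emptyset$, the canonical decomposition of any simplex $\sigma\notin\cL(K)$ forces $\sigma\cap|K_{n-a}|=\sigma_{n-a}$ to be a $b$-dimensional simplex, i.e.\ a \emph{unique} clot $\beta$; hence the sets $(\beta*L_\beta)\menos\cL(\beta*L_\beta)$ partition the simplices of $K$ lying outside $\cL(K)$. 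I would then take $\Phi$ to be the restriction of a relative cochain $\omega$ to the simplices of each $\beta*L_\beta$. It is injective because these regions together with $\cL(K)$ exhaust $K$ and $\omega$ vanishes on $\cL(K)$; it is surjective because a family $(\omega^{(\beta)})_\beta$ of relative cochains glues to $\omega$ by setting $\omega_\sigma=\omega^{(\beta)}_\sigma$ when $\sigma\cap|K_{n-a}|=\beta$ and $\omega_\sigma=0$ on $\cL(K)$. Face-compatibility $\delta_\ell^*(\omega_\sigma)=\omega_{\sigma\circ\delta_\ell}$ survives this gluing: a regular face of a simplex carrying $\beta$ either keeps $\beta$, where the identity is that of $\omega^{(\beta)}$, or drops a vertex of $\beta$ and lands in $\partial\beta*L_\beta=\cL(\beta*L_\beta)\subset\cL(K)$, where both sides vanish. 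As the blown-up coboundary is computed simplex by simplex, $\delta$ decomposes as well, so $\Phi$ is an isomorphism of cochain complexes.

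The delicate point in (ii), and the step I expect to be the main obstacle, is the matching of the perversity conditions. The perverse degree $\|\omega\|_S$ of \defref{def:thomwhitney} is a supremum over all regular simplices meeting $S$; since $\omega$ vanishes on $\cL(K)$, only simplices carried by some clot contribute, and these are distributed among the joins. Using the source correspondence of strata (\secref{IFC}) together with $\ov p(S)=\ov p(S_\beta)$ on sources \eqref{SE}, one obtains $\|\omega\|_S=\sup_{\beta}\|\omega^{(\beta)}\|_{S_\beta}$, so $\|\omega\|\leq\ov p$ holds iff every factor is $\ov p$-allowable. This is exactly why the decomposition is a \emph{product} rather than a sum: a supremum is bounded by $\ov p$ precisely when each term is, and there is no finiteness constraint on cochains. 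The same reasoning applied to $\delta\omega$ shows $\Phi$ preserves $\ov p$-intersection cochains, and passing to homology gives the asserted isomorphism.

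For item (i) the direct gluing is unavailable, so I would mimic the topological proof of \propref{SimplRelBis}, replacing intersection homology by the blown-up intersection cohomology and invoking its homotopy invariance and excision properties from \cite{CST5,CST8}. Concretely, the thickening map $F$ and the open set $W$ constructed there give a filtration-preserving deformation of $W$ onto $|\cL(K)|$, whence $\crH^*_{\ov p}(W)\cong\crH^*_{\ov p}(|\cL(K)|)$ and therefore $\lau \IH{*}{\ov p}{|K|,W}\cong\lau \IH{*}{\ov p}{|K|,|\cL(K)|}$. Excision identifies this with the relative cohomology of $(|K|\menos|\cL(K)|,\,W\menos|\cL(K)|)$, which by \eqref{union} is a disjoint union over $\beta\in\cB(K)$; as the cohomology of a disjoint union is a product, a further excision and the homeomorphism \eqref{13} produce $\prod_\beta\lau\IH{*}{\ov p}{|\beta*L_\beta|,|\partial\beta*L_\beta|}$. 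The only genuinely new verifications are that excision and homotopy invariance hold at the level of the blown-up complex $\lau\tN *{\ov p}{-}$ and that an infinite disjoint union again contributes a product, not a sum, to the relative cohomology, which once more follows from the absence of a support condition on cochains.
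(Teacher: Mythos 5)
Your proposal is correct and takes essentially the same route as the paper's own proof in both parts. For (ii) the paper likewise argues directly at the cochain level, identifying $\lau \tN {*}{\ov p}{K,\cL(K)}$ with the cochains vanishing on $\cL(K)$ and splitting them as a product over clots via \eqref{interLK} (your gluing and perversity checks are exactly the details it leaves implicit), and for (i) the paper also transports the thickening--excision--disjoint-union scheme of \propref{SimplRelBis} to blown-up cohomology, invoking homotopy invariance, excision and the product formula from \cite{CST5}.
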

\begin{proof}
i) We proceed as in the proof of \propref{SimplRelBis}  keeping the same notations.
 Applying \cite[Proposition 11.3]{CST5}, we get that the inclusion induces the  isomorphism $\lau \IH * {\ov p} { W}\cong \lau \IH * {\ov p} {|\cL(K)|}$ 
which gives an isomorphism
$$
\lau \IH * {\ov p} {|K|, W} 
\cong
\lau \IH * {\ov p} {|K|,|\cL (K)|}.
$$
 By excision (cf. \cite[Proposition 12.9]{CST5}) we have
$$
\lau \IH * {\ov p} {|K|, W} 
\cong
 \lau \IH * {\ov p} {|K| \menos |\cL(K)|, W \menos |\cL(K)|}.
$$
From the decompositions made in \eqref{union}, we get
$$
 \lau \IH * {\ov p} {|K| \menos |\cL(K)|, W \menos |\cL(K)|} 
\cong
 \prod_{\beta \in \cB(K )} \lau \IH * {\ov p} {| \beta* L_\beta| \menos |\partial \beta *L_\beta|, 
 | \beta* L_\beta| \menos (\{ b_\beta \} \cup |\partial  \beta*L_\beta| }). 
$$
Using excision relatively to the closed subset $|\partial\beta  * L_\beta |$, we obtain 
$$
\lau \IH * {\ov p} {|\beta  * L_\beta ) \menos |\partial \beta  *L_\beta |, 
 |\beta  * L_\beta | \menos (\{ b_\beta  \} \cup |\partial \beta  *L_\beta  |)} \cong \lau \IH * {\ov p} {|\beta  * L_\beta |,|\beta  *L_\beta | \menos  \{ b_\beta  \} }. 
$$
Finally, applying \cite[Theorem D]{CST5} to the homeomorphism\eqref{13}, we deduce
$$
\lau \IH * {\ov p} {|\beta  * L_\beta |,|\beta  *L_\beta | \menos  \{ b_\beta  \} } \cong
\lau \IH * {\ov p} {|\beta  * L_\beta |,|\partial \beta  *L_\beta | }
$$
and therefore  the isomorphism i).

\medskip

ii) 
The complex $\lau \tN {*} {\ov p} {K,\cL(K)}$ is made up of the cochains of $K$ vanishing on $\cL (K)$. From \eqref{interLK} we get
  $
  \lau \tN {*} {\ov p} {K,\cL(K)} \cong \prod_{\beta  \in \cB(K )} \lau \tN {*} {\ov p} {\beta *L_\beta ,\cL(\beta  * L_\beta )}
  $
and therefore,
$$
 \lau \IH  *  {\ov p}{K,\cL(K)}  \cong \prod_{\beta \in \cB(K )} \lau \IH  *  {\ov p} {\beta *L_\beta ,\cL(\beta  * L_\beta )},
 $$
by restriction.  
\end{proof}

The next result specifies the  blown-up  intersection cohomology of some pieces of the decomposition brought by \propref{RelativoBiss}.

\begin{proposition}\label{JoinMain} 
Consider a perverse full filtered simplicial complex $(K,\ov p)$.
Let  $\beta \in \cB(K)$ be a clot such that the stratum $Q \in \cS_K$ containing $\beta$ is singular. 
We have
\begin{itemize}
\item [i)]$
\lau \IH *{\ov p} {|\beta * L_\beta|} \cong
\left\{
\begin{array}{ll}
\lau \IH *{\ov p} {|L_\beta|} & \hbox{if } *\leq \ov  p(Q),\\
0 & \hbox{if not,}
\end{array}
\right.
$
\item[]
\item[ii)] 
$
\lau \IH *{\ov p} {\beta * L_\beta} \cong
\left\{
\begin{array}{ll}
\lau \IH *{\ov p} {L_\beta} & \hbox{if } *\leq \ov  p(Q),\\
0 & \hbox{if not,}
\end{array}
\right.
$
\end{itemize}
where the isomorphisms are  induced by the natural inclusion $L_\beta \hookrightarrow \beta * L_\beta$.
\end{proposition}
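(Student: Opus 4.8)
The plan is to read both isomorphisms off a single cone computation, dualizing the homological statements of \propref{CalculoJoin} and \propref{CalculoJoinBis}. Since $\beta$ is a clot it sits, like the apex of a cone, inside the singular stratum $Q$ at virtual degree $q=\dim_v Q$, and the degree bound $*\leq\ov p(Q)$ is precisely the \emph{cohomological} cone cut-off, dual to the homological cut-off $D\ov p(Q)$ of \propref{CalculoJoin}. For each part I would first reduce to the cone case $\dim\beta=0$ and then invoke a cone formula, keeping track of the fact that the surviving classes are exactly those pulled back from $L_\beta$ along the inclusion.

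For part i) I mirror the argument of \propref{CalculoJoinBis}. If $\dim\beta>0$, set $\eta=\langle A_0\rangle$ for the first vertex $A_0$ of $\beta$; the simplicial contraction $F\colon\beta\times[0,1]\to\beta$ from $\id_\beta$ to the constant map onto $A_0$ induces a filtration-preserving homotopy on $|\beta * L_\beta|$ between the identity and the collapse onto $|\eta * L_\beta|$. Homotopy invariance of the singular blown-up cohomology (\cite[Proposition 11.3]{CST5}) then shows that the inclusion $|\eta * L_\beta|\hookrightarrow|\beta * L_\beta|$ induces an isomorphism on $\lau\IH *{\ov p}{-}$, reducing the statement to the cone $\eta * L_\beta=\tc L_\beta$ with apex $A_0\in Q$. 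For this cone the blown-up cone computation of \cite{CST5} gives $\lau\IH *{\ov p}{\tc L_\beta}\cong\lau\IH *{\ov p}{|L_\beta|}$ for $*\leq\ov p(Q)$ and $0$ otherwise, the isomorphism being induced by $L_\beta\hookrightarrow\tc L_\beta$; composing with naturality yields i).

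For part ii) the topological homotopy is not available, since the simplicial blown-up complex is built on $\simp K$ rather than on $\singf|K|$, so I would compute directly on the cochain complex. For a simplex $\gamma * \tau$ of $\beta * L_\beta$ with $\gamma\triangleleft\beta$ and $\tau\in L_\beta$, the $\sigma$-decomposition is empty below level $q$, so the blow-up reduces to $\tN^*(\gamma * \tau)\cong N^*(\tc\gamma)\otimes\tN^*(\tau)$, isolating a cone factor $N^*(\tc\gamma)$ in the direction of $\beta$. The $\ell$-perverse degree along $Q$, with $\ell=\codim_v Q$, is controlled by the cone coordinate $\varepsilon_q$ of this factor through \defref{def:degrepervers}, and \remref{Milagro} (applicable because $F_0=\dots=F_{q-1}=\emptyset$) shows the perverse degrees are insensitive to the face $F_q$. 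Using the standard contraction of $N^*(\tc\gamma)$ onto its apex, I would build an explicit cochain homotopy on $\lau\tN *{\ov p}{\beta * L_\beta}$ annihilating the classes of degree $>\ov p(Q)$ and identifying the rest with $\lau\IH *{\ov p}{L_\beta}$ by restriction, which gives ii).

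The main obstacle is this simplicial cone formula: the perverse-degree bookkeeping through the blow-up tensor product must be carried out by hand, and one must verify that the cone contraction preserves $\ov p$-allowability, so that it restricts to the $\ov p$-intersection subcomplex rather than merely to the ambient blown-up complex, and that the cut-off falls exactly at $\ov p(Q)$. A secondary point is to check, in part i), that the homotopy $F$ is genuinely stratum-preserving on the join, so that \cite[Proposition 11.3]{CST5} applies, exactly as in \propref{CalculoJoinBis}.
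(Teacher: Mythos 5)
Your part i) is the paper's own argument, step for step: reduction to the vertex $A_0$ via the simplicial contraction and \cite[Proposition 11.3]{CST5}, then the cone formula \cite[Theorem E]{CST5}; the paper only adds two normalizations (it disposes of $L_\beta=\emptyset$ first, and shifts the filtration so that $\codim_v Q=n$). So i) stands.

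The gap is in part ii), and it sits exactly at the step you flagged as ``to be verified'': the standard contraction of $\Hiru N *{\tc\beta}$ onto its apex does \emph{not} preserve $\ov p$-allowability, so it does not restrict to $\lau \tN * {\ov p}{\beta * L_\beta}$. Indeed, that contraction is dual to coning at the apex, hence sends $\1_{(F,1)}\mapsto \pm\1_{(F,0)}$; tensored with the identity of $\Hiru \tN * {L_\beta}$ it sends $\1_{(F,1)}\otimes\tau$ to $\pm\1_{(F,0)}\otimes\tau$, whose perverse degree along $Q$ jumps from $-\infty$ to $\deg\tau$. Now $\1_{(F,1)}\otimes\tau$ is a $\ov p$-intersection cochain of $\beta*L_\beta$ for \emph{every} intersection cochain $\tau$ of $L_\beta$ (its coboundary only has components of type $(F',1)$ and $(\emptyset,1)$), so as soon as $L_\beta$ carries intersection cochains of degree $>\ov p(Q)$ the proposed homotopy leaves the intersection subcomplex; the classes it is supposed to ``annihilate'' are precisely the ones on which it is not defined. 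Building a homotopy that is truncation-aware amounts to redoing the analysis differently, and this is what the paper does: allowability of $\omega$ \emph{and} of $\delta\omega$ along $Q$ forces the $(F,0)$-components $\omega_F$ to lie in the truncated complex $\tau_{\ov p(Q)}\lau \tN * {\ov p}{L_\beta}=\lau \tN {<\ov p(Q)}{\ov p}{L_\beta}\oplus\bigl(\lau \tN {\ov p(Q)}{\ov p}{L_\beta}\cap\delta^{-1}(0)\bigr)$, which yields the short exact sequence
$$
0\to \Hiru N * {\tc\beta,\beta}\otimes\lau \tN * {\ov p}{L_\beta}
\hookrightarrow \lau \tN * {\ov p}{\beta * L_\beta}
\xrightarrow{\ \upsilon\ } \Hiru N * {\beta}\otimes\tau_{\ov p(Q)}\lau \tN * {\ov p}{L_\beta}\to 0 .
$$
Its kernel, the cochains vanishing on $\beta$ (spanned by the $(F,1)$- and $(\emptyset,1)$-components), has all perverse degrees along $Q$ equal to $-\infty$, so contracting the acyclic complex $\Hiru N * {\tc\beta,\beta}$ there creates no allowability problem; the quotient has cohomology $\Hiru H * {\beta}\otimes H^*\bigl(\tau_{\ov p(Q)}\lau \tN {\cdot}{\ov p}{L_\beta}\bigr)$ with $\Hiru H * {\beta}={\sf R}$ concentrated in degree $0$, i.e.\ exactly the truncated cohomology of $L_\beta$. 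In short: the truncation at $\ov p(Q)$ must enter as the quotient of a short exact sequence (with the contraction applied only to the relative part), not as the outcome of a cone contraction of the whole intersection complex; with that substitution your part ii) becomes the paper's proof.
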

\begin{proof} If $L_\beta = \emptyset$ then $\lau N *{\ov p} {\beta * L_\beta}  = \lau N *{\ov p} { \beta}  =0$ since there are no regular simplices ($Q$ is singular). So, we can suppose $L_\beta \ne \emptyset$. Let $\comp K = (a,b)$.

i) We proceed as in the proof of \propref{CalculoJoinBis} keeping the same notations.
The case $\dim \beta =0$ is given by \cite[Theorem E]{CST5}, since $L_\beta \ne \emptyset$. 
When $\dim \beta >0$ by using  \cite[Proposition 11.3]{CST5},
 we get that $(F(-,1))^* \colon \lau \IH * {\ov p} {|\beta * L_\beta|} \to \lau \IH * {\ov p} {|\beta * L_\beta|}$ is the identity.
Finally, since $\id   \colon | \eta * L_\beta|  \hookrightarrow | \beta * L_\beta| \xrightarrow{\xi} | \eta *L_\beta|$ and 
$F(-,1) \colon |\beta * L_\beta|    \xrightarrow{\xi} | \eta *L_\beta|  \hookrightarrow |\beta * L_\beta|$, 
we conclude that the inclusion $ | \eta * L_\beta|  \hookrightarrow |\beta * L_\beta|$ 
induces the isomorphism $ \lau\IH *{\ov p} {|\beta * L_\beta|}  \cong \lau \IH *{\ov p} {| \eta*L_\beta|} $.

ii) 
By definition, we have
$$
\Hiru \tN  *  {\beta * L_\beta} =  \Hiru N * {\tc \beta } \otimes \Hiru \tN  *  {L_\beta},
$$
and any cochain $\omega \in \lau \tN  * {} {\beta * L_\beta} $
can be written as a sum 
$$ 
\omega = \sum_{F \triangleleft  \beta}  \1_{(F,0)} \otimes \omega_F + \sum_{F\triangleleft  \beta} 
\1_{(F,1)}  \otimes \tau_F +    \1_{(\emptyset ,1)}  \otimes \tau_\emptyset,
$$
with $\omega_{F},\,\tau_{F},\,\tau_{\emptyset}$ in $\Hiru \tN *  {L_\beta}$
if $F \triangleleft  \beta$.
 The perverse degrees are computed as follows,
\begin{itemize}
\item[-] $\|\1_{(F,0)} \otimes \tau\|_\ell
 = 
\left\{
\begin{array}{ll}
\deg \tau & \hbox{if } \ell= a, \\
\|\tau\|_\ell &\hbox{if } \ell<a,
\end{array}
\right.
$

\item[]

\item[-] $\| \1_{(F,1)} \otimes \tau\|_\ell = 
\left\{
\begin{array}{ll}
-\infty & \hbox{if } \ell = a,\\
\|\tau\|_\ell &\hbox{if } \ell <a,
\end{array}
\right.$
\end{itemize}
for each $\ell \in \{1, \ldots,n\}$.  The condition $\|\omega\|\leq \ov p$ is equivalent to
$\max(\|\omega_F\|, \|\tau_F\|,\|\tau_\emptyset\|) \leq \ov p$ and $\deg \omega_F  \leq \ov  p(Q)$, for each $F \triangleleft  \beta$.
 For $\|\delta \omega\| \leq \ov p$, this becomes
 $\max(\|\delta\omega_F\|, \|\delta\tau_F\|,\|\delta\tau_\emptyset\|) \leq \ov p$, and $\deg (\delta\omega_F )\leq \ov  p(Q)$.
In particular, we have, for each $F\triangleleft \beta$,
$$\omega_F \in 
\lau \tN{<\ov p(Q)} {\ov p }  {L_\beta}  \oplus \left( \lau \tN{\ov  p(Q)}{\ov p }  {L_\beta} \cap \delta^{-1}(0)\right)
\text{ and } \tau_F , \tau_\emptyset \in \lau \tN*{\ov p} {L_\beta}.$$ 
The complex   $\Hiru N * {\tc\beta, \beta} $
 is made up of the cochains on $\tc \beta$ vanishing on $\beta$.
 It is generated by the family $\{ \1_{(F,1)} \mid F =\emptyset \hbox{ or } F \triangleleft \beta\}$.
Let us consider the two 
short
 exact sequences,
$$
0\to \Hiru N * {\tc\beta, \beta} \hookrightarrow \Hiru N *  {\tc \beta} \xrightarrow{\nu} \Hiru N * {\beta} \to 0,
$$
where 
$
\nu \left( 
 \sum_{F \triangleleft\beta} n_F \,  \1_{(F,0)}
 + \sum_{F\triangleleft \beta} m_F \,  \1_{(F,1)}
+ n_\emptyset \, \1_{(\emptyset,1)} 
\right) 
= \sum_{F \triangleleft \beta}  u_F \, \1_{F}, 
$
with $n_{F},\,m_{F},\,n_{\emptyset}\in \sf R$,
and
$$
0\to  \Hiru N * {\tc\beta, \beta}  \otimes \lau \tN  * {\ov p} {L_\beta} 
\hookrightarrow 
 \lau \tN*{\ov p} {\beta *L_\beta}
\xrightarrow{\upsilon} 
 \Hiru N * {\beta}  \otimes \tau_{\ov  p(Q)}\lau \tN * {\ov p} {L_\beta} \to 0,
$$
where 
$
\upsilon\left( 
 \sum_{F \triangleleft  \beta} \1_{(F,0)} \otimes \omega_F 
 + \sum_{F\triangleleft  \beta}  \1_{(F,1)}  \otimes \tau_F 
+   \1_{(\emptyset,1)} \otimes \tau_{\emptyset}
\right)
 =    \sum_{F \triangleleft  \beta} \1_{(F,0)} \otimes \omega_F. 
$

Since the map $\nu$ is a quasi-isomorphism,  the complex $ \Hiru N * {\tc\beta, \beta} $ is acyclic. 
So, the map $\upsilon$ is a quasi-isomorphism. 
The result comes from  $\Hiru H j {\beta} = 0$ if $i>0$ and $\Hiru H 0 {\beta} =\sf R$.
\end{proof}

When $Q$ is a regular stratum, we have $L_\beta = \emptyset$, 
$\lau \IH i {\ov p}  {|\beta *L_\beta|} =\lau \IH i {\ov p} {|\beta|}  =  \Hiru H i{| \beta |}= 0$ if $i>0$ and $\Hiru H 0 {| \beta |}=\sf R$. 
Similarly, the simplicial blown-up intersection cohomology verifies
$\lau \IH i {\ov p}  {\beta *L_\beta} =\lau \IH  i {\ov p}  {\beta}  =  \Hiru H i{\beta }=0$ if $i>0$ and $\lau \IH  0 {\ov p}  {\beta}=\Hiru H 0 { \beta }=\sf R$.

\medskip
The second main result of this work establishes an isomorphism between the singular and the simplicial blown-up intersection cohomologies.
These cohomologies are related by the cochain map
$\rho \colon
\lau \tN * {\ov p} {|K|} \to \lau \tN * {\ov p} K
$
induced by  the natural inclusion $\simp K \subset \singf |K|$ (cf. \eqref{iota}).

\begin{theorem}\label{DKTris}
Let $(K,\ov p)$ be a perverse full  filtered simplicial complex. Then, the map $\rho$ induces  an isomorphism
\begin{equation}\label{B}
\lau \IH {*} {\ov p} {|K|} {\cong}   \lau \IH  *  {\ov p} {K}.
\end{equation}
\end{theorem}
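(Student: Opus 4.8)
The plan is to follow the two-step pattern of the proof of \thmref{DKBis}, replacing the intersection-chain propositions by their blown-up counterparts \propref{prop:restr3}, \propref{RelativoBiss} and \propref{JoinMain}, and keeping in mind that $\rho$ now runs from the singular to the simplicial side (it is a restriction, not an inclusion). I would first treat finite-dimensional complexes $K=K^{(\ell)}$ by induction on the complexity $\comp K=(a,b)$, and then pass to an arbitrary $K$ by an inverse-limit argument over the skeleta.

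\emph{First step: $K=K^{(\ell)}$.} When $b=0$ the complex is discrete, $\lau \tN * {\ov p} K = \lau \tN * {\ov p}{|K|}$, and $\rho$ is an isomorphism. For $b\geq 1$ I would feed the two short exact sequences \eqref{sucesasta} into a commutative ladder with the singular row on top, the simplicial row below, and $\rho$ as the vertical maps. Since $b<\infty$ we have $\comp \cL(K)<\comp K$, so the induction hypothesis makes the residual column a quasi-isomorphism; by the five lemma it suffices to treat the relative columns. By \propref{RelativoBiss} both relative complexes split, compatibly with $\rho$, as products indexed by the clots $\beta\in\cB(K)$, reducing the problem to a single $\beta$. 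Applying \eqref{sucesasta} once more to the pair $(\beta*L_\beta,\cL(\beta*L_\beta))$ and using $\comp\cL(\beta*L_\beta)<\comp(\beta*L_\beta)\leq\comp K$, the induction again disposes of the residual column, so it remains to see that $\rho$ is a quasi-isomorphism on $\beta*L_\beta$. If $\beta$ lies in a regular stratum then $L_\beta=\emptyset$ and this is the classical comparison of simplicial and singular cochains of a simplex; if $\beta$ lies in a singular stratum $Q$, the two computations \propref{JoinMain}(i) and (ii) express $\lau \IH * {\ov p}{|\beta*L_\beta|}$ and $\lau \IH * {\ov p}{\beta*L_\beta}$ through the inclusion-induced restriction to $|L_\beta|$, respectively $L_\beta$, in the range $*\leq\ov p(Q)$, and make them vanish otherwise. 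These isomorphisms fit with $\rho$ into a commutative square whose horizontal arrows are induced by $L_\beta\hookrightarrow\beta*L_\beta$; since $\comp L_\beta<\comp K$, the induction gives that $\rho$ is an isomorphism on $L_\beta$, hence on $\beta*L_\beta$, closing the induction.

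\emph{Second step: general $K$.} Here I cannot invoke finiteness of cochains as in the homological case, so I would write each blown-up complex as an inverse limit over the skeletal exhaustion $K=\bigcup_\ell K^{(\ell)}$. As every singular simplex has compact image, a cochain on $|K|$ (resp.\ on $K$) is exactly a compatible family of cochains on the $|K^{(\ell)}|$ (resp.\ on the $K^{(\ell)}$), whence
$$
\lau \tN * {\ov p}{|K|}=\varprojlim_\ell \lau \tN * {\ov p}{|K^{(\ell)}|},\qquad
\lau \tN * {\ov p}{K}=\varprojlim_\ell \lau \tN * {\ov p}{K^{(\ell)}},
$$
and $\rho$ is the inverse limit of its restrictions $\rho_\ell$. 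The structure maps of these towers are degreewise onto: a cochain on the $\ell$-skeleton extends over each new $(\ell+1)$-simplex because the blown-up cochains of a simplex surject onto those of its boundary, and the perverse-degree bounds on the extension and its coboundary are controlled exactly as in \propref{prop:restr3}, via \remref{Milagro}. Hence both towers are Mittag-Leffler, $\varprojlim^1$ of the cochain groups vanishes, and each admits a Milnor short exact sequence relating $H^*$ of the limit to $\varprojlim$ and $\varprojlim^1$ of the levelwise cohomologies. By the first step each $\rho_\ell$ is a quasi-isomorphism, compatibly with the restrictions, so $\rho$ induces isomorphisms on the $\varprojlim$ and $\varprojlim^1$ of the cohomology towers; the five lemma applied to the two Milnor sequences then yields the isomorphism $\lau \IH * {\ov p}{|K|}\cong \lau \IH * {\ov p}{K}$ of \eqref{B}.

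I expect the genuine difficulty to lie entirely in this second step, precisely in the degreewise surjectivity of the skeletal restriction maps \emph{while preserving $\ov p$-allowability of both the extended cochain and its coboundary}; once this is secured, the vanishing of $\varprojlim^1$ and the Milnor/five-lemma mechanism are routine. The first step, by contrast, is a formal transcription of the proof of \thmref{DKBis} as soon as \propref{RelativoBiss} and \propref{JoinMain} are available.
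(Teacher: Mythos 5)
Your First Step is sound and is essentially the paper's own argument (the paper runs the complexity induction under the hypothesis $b<\infty$ rather than for finite-dimensional $K$, a harmless difference): same ladder of short exact sequences, same use of \propref{RelativoBiss} and \propref{JoinMain}, same two cases according to the stratum of $\beta$. The genuine gap is exactly at the point you yourself flagged in the Second Step: the claim that the skeletal restrictions $\lau \tN * {\ov p} {K^{(\ell+1)}}\to \lau \tN * {\ov p} {K^{(\ell)}}$ are degreewise onto, ``controlled exactly as in \propref{prop:restr3}, via \remref{Milagro}''. That control is special to the pair $(K,\cL(K))$ and does not transfer to skeleta. In \propref{prop:restr3}, since $K_{n-a-1}=\emptyset$, every simplex has $\sigma_{0}=\dots=\sigma_{n-a-1}=\emptyset$ and membership in $\cL(K)$ is decided by the dimension of the \emph{single} slot $\sigma_{n-a}$; consequently a face $(H,\tau)$ lying outside $\cL(K)$ whose facet $(F,\varepsilon)$ lies inside differs from it only in the coordinate $n-a$, and \remref{Milagro} says precisely that this coordinate is invisible to every perverse degree. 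Membership in a skeleton is decided by \emph{total} dimension, so the faces crossed when extending a cochain from $K^{(\ell)}$ to $K^{(\ell+1)}$ may differ in any coordinate, in particular in the regular slot $\Delta_{n}$, which every perverse degree sees; the coboundary of the extension then jumps in perverse degree, and no choice of extension need repair this.

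Concretely, let $K=\tw\ast\partial\Delta^{3}$ (cone on the $2$-sphere, vertices $v_{0},\dots,v_{3}$), filtered by $K_{0}=K_{1}=K_{2}=\{\tw\}$, $n=3$, with $\ov p(\{\tw\})=1$. Define $\omega$ on $K^{(2)}$ by $\omega_{\tw\ast[v_{0},v_{1}]}=\1_{([\tw],0)}\otimes\1_{(\emptyset,1)}\otimes\1_{(\emptyset,1)}\otimes\1_{[v_{0},v_{1}]}$ and $\omega_{\sigma}=0$ on every other regular simplex; this is a compatible family, $\|\omega\|_{\{\tw\}}=1$ and $\|\delta\omega\|_{\{\tw\}}=-\infty$ (inside $K^{(2)}$ every face with $\varepsilon_{0}=0$ has last slot of dimension $\leq 1$, and $\delta\1_{[v_{0},v_{1}]}=0$ in $N^{*}([v_{0},v_{1}])$), so $\omega\in\lau \tN 1 {\ov p}{K^{(2)}}$. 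But on the $3$-simplex $\tw\ast[v_{0},v_{1},v_{2}]$ compatibility with the faces $\tw\ast e$ and $[v_{0},v_{1},v_{2}]$ forces any extension to be $\eta_{\tw\ast[v_{0}v_{1}v_{2}]}=\1_{([\tw],0)}\otimes\1\otimes\1\otimes\1_{[v_{0},v_{1}]}$, whose coboundary contains $\pm\,\1_{([\tw],0)}\otimes\1\otimes\1\otimes\1_{[v_{0},v_{1},v_{2}]}$, of perverse degree $2>\ov p(\{\tw\})$ (the obstruction is $\delta_{L}$ of the link component of $\omega$, which is not a cocycle). So restriction from $K^{(3)}=K$ to $K^{(2)}$ is not onto, your towers need not be Mittag-Leffler, and the Milnor/$\varprojlim^{1}$ argument collapses. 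This is exactly why the paper's Second Step for cohomology is \emph{not} the skeletal argument of \thmref{DKBis}: it filters instead by the bottom slot, $K^{k}=\{\sigma\in K\mid \dim\sigma_{n-a}\leq k\}$, for which $\cL(K^{k})=K^{k-1}$, so that \propref{prop:restr3} applies verbatim to give surjectivity of the structure maps; each level has $\comp (K^{k})=(a,k)$ and is handled by the First Step, an outer induction on $a$ takes care of $K^{-1}=\{\sigma\mid \sigma_{n-a}=\emptyset\}$, and then the Milnor sequence and the five lemma finish as you intend.
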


\begin{proof}
We proceed in two steps.

\medskip
\emph{First Step :} Suppose  $\comp K=(a,b)$ with $b < \infty$.  
Given a subcomplex $L \subset K$, we also have   $b_L\leq  \dim L< \infty$, with $\comp L = (a_L,b_L)$, and therefore $\comp \cL(L) < \comp L$ (cf. Subsection~\ref{Res}).
We use an induction on the complexity $(a,b)$ of $K$.
When $a=b=0$, the complex $K$ is a discrete family of 0-dimensional  simplices, so we have 
$\lau N* {\ov p} K = \lau N *{\ov p}  {|K|}$ and  the claim.
For the  inductive step, we consider the following commutative diagram deduced from \eqref{sucesasta},
$$
\xymatrix{
0 \ar[r] & 
\lau N  * {\ov p} {|K|,|\cL(K)|}
\ar[d]  
\ar@{^{(}->}[r]
&
\lau N *{\ov p}  {|K|} 
  \ar[r]^-\gamma \ar[d]  &\lau N* {\ov p}  { |\cL(K)|}  \ar[d] \ar[r]& 0\\
0 \ar[r] & 
 \lau N  * {\ov p} {K,\cL(K)}
 \ar@{^{(}->}[r]&
\lau N * {\ov p}  K
   \ar[r]^-{\gamma} 
   &\lau N  * {\ov p} {\cL(K)}
    \ar[r]& 0
}
$$
where the vertical maps are induced by the map $\rho$.
From the induction hypothesis, 
we know that the right arrow is a quasi-isomorphism.
So, it suffices to prove that the 
left  arrow is a quasi-isomorphism. Using \propref{RelativoBiss},  this assertion
is equivalent to the fact that
the  map $\rho$ induces a quasi-isomorphism, for each  $\beta \in \cB(K )$,
$$
  \lau N *{\ov p}  {|\beta * L_\beta| , |\cL(\beta * L_\beta|)}
  \to
  \lau N  *{\ov p} {\beta * L_\beta ,\cL(\beta * L_\beta}).
$$
From \eqref{sucesasta},  we deduce
the commutative diagram 
$$
\xymatrix{
0 \ar[r] & 
\lau N  * {\ov p} {|\beta * L_\beta|,|\cL(\beta * L_\beta)|}
\ar[d]  
\ar@{^{(}->}[r]
&
\lau N * {\ov p}  {|\beta * L_\beta|} 
  \ar[r]^-\gamma \ar[d]  &\lau N {\ov p} * { |\cL(\beta * L_\beta)|}  \ar[d] \ar[r]& 0\\
0 \ar[r] & 
 \lau N *  {\ov p} {\beta * L_\beta,\cL(\beta * L_\beta)}
 \ar@{^{(}->}[r]&
\lau N * {\ov p}  {\beta * L_\beta}
   \ar[r]^-{\gamma} 
   &\lau N *  {\ov p} {\cL(\beta * L_\beta)}
    \ar[r]& 0
}
$$
where the vertical maps are induced by $\rho$.
We have  
$
 \comp \cL(\beta * L_\beta) < \comp (\beta * L_\beta) \leq \comp K
$ (cf. Subsection~\ref{IFC})
and the induction hypothesis implies  
that the  right arrow is a quasi-isomorphism. It remains to prove that the middle arrow is a quasi-isomorphism. 
We distinguish two cases.

\medskip
$\bullet$ \emph{$\beta$ is included in a regular stratum}. We have $L_\beta=\emptyset$ and
this case is resolved in the paragraph following the proof of \propref{JoinMain}.

\medskip
$\bullet$ \emph{$\beta$ is included in a singular stratum}. 
  We have $\comp L_\beta < \comp K$ (cf. Subsection~\ref{links}).
From \propref{JoinMain} and   the induction hypothesis, the 
middle arrow is a quasi-isomorphism.

\medskip
\emph{Second Step : Suppose  $\comp K=(a,\infty)$}. 
We proceed by induction on $a \in \{0, \ldots,n\}$. When $a=0$,  $K$  has no singular part, that is, $K_{n-1}=\emptyset$. 
So, from the classical situation, the  map $\rho$ induces the isomorphism $\lau H {*} {} {|K|} {\cong}   \lau H  *  {} K$. 
Let us  consider the inductive step with $a>0$.

Given $k\in \N$, we define 
$
K^{k} = \{ \sigma \in K \mid \dim \sigma_{n-a} \leq k\}
$
and $K^{-1} = \{\sigma \in K \mid \sigma_{n-a}=\emptyset\}$.
They are simplicial subcomplexes of $K$ with $K = \bigcup_{k \geq -1} K^k$ and 
there exists an  infinite sequence
$$
K^{-1} \subset K^0 \subset \dots\subset  K^k \subset K^{k+1} \subset \dots 
$$
Each of theses complexes is endowed with the induced structure defined in Subsection~\ref{IFC}.
We prove that the filtered simplicial complex $K^k$, with $k \in \N \cup \{-1\}$, verifies \eqref{B}.

Let us begin with the case  $k=-1$. By construction, we have $\comp (K^{-1}) = (a_-,b_-) < (a,0)$.  
If $b_-< \infty$, the First Step gives the claim. If $b_-  = \infty$,  the inductive step assures us that $K^{-1}$ verifies \eqref{B}. 
Let $k\in \N$ with $K^k \ne K^{k-1}$, we have $\comp K^{k} = (a,k)$. 
Following the First Step, we conclude that $K^k$ verifies \eqref{B}. 
We also have $\cL(K^k) = K^{k-1}$ by definition. By using \propref{prop:restr3}, we know  that the morphisms,
\begin{equation}\label{rhoand}
\rho^* \colon \lau N * {\ov p} { K^{k}} \to \lau N * {\ov p}  {K^{k-1}} \hspace{1cm} \hbox{ and }  \hspace{1cm}   \rho^* \colon
\lau N* {\ov p} { |K^{k} |} \to \lau N * {\ov p}  { |K^{k-1}|},
\end{equation}
 induced by the inclusion $K^{k-1} \hookrightarrow K^{k}$, are onto maps.

Associated to the  directed set $K = \bigcup_{k\geq -1} K^{k} = \varinjlim K^{k}$,
 we have the towers $(\lau N * {\ov p} {K^{k} } )_{k\geq -1}$ and $(\lau N* {\ov p} { |K^{k} | })_{k\geq -1}$. 
 As they are Mittag-Leffler  (see \eqref{rhoand}), we have the commutative diagram,
$$
\xymatrix{
0\ar[r]
&{\varprojlim}^1\lau \IH {*-1}{\ov p}{ | K^{k} |}\ar[r] \ar[d]
&\lau \IH * {\ov p} {|K|} \ar[r]\ar[d]
&\varprojlim \lau \IH {*}{\ov p}{ | K^{k}| }\ar[r]\ar[d]
&0 \\
0\ar[r]
&{\varprojlim}^1\lau \IH {*-1}{\ov p}{K^{k} }\ar[r]
&\lau \IH * {\ov p} K \ar[r]
&\varprojlim \lau \IH {*}{\ov p}{K^{k} }\ar[r]
&0,
}
$$
where the vertical maps are induced by $\rho$. From the first step,  the left and right maps are isomorphisms.
So the Five's Lemma ends the proof. \end{proof}

\section{Simplicial and singular versus PL}

\begin{quote}
Let $X$ be a PL space relatively to a family  $\cT$ of triangulations, endowed with a PL filtration.
In this section, we first define the PL blown-up intersection cohomology of $X$. 
In \thmref{TeoremaPL}, we prove that this cohomology is isomorphic 
to the singular blown-up intersection cohomology of $X$
and to the simplicial blown-up cohomology of any full triangulation belonging to $\cT$.
\end{quote}

Let us begin with basic recalls on  PL spaces.

\begin{definition}\label{def:triangulation}
A \emph{triangulation} of $X$ is a pair $(K,h)$ of a   simplicial complex, $K$, and a homeomorphism,
$h\colon |K|\to X$. A \emph{subdivision} of $(K,h)$ is a pair $(L,h)$ where $L$ is a subdivision of $K$,  denoted $L\vartriangleleft K$.
Two triangulations, $(K,h)$ and $(L,f)$, of $X$ are \emph{equivalent} if $f^{-1} h\colon |K|\to X\to |L|$
is induced by a simplicial isomorphism $K\to L$.
\end{definition}

\begin{definition}\label{df:plspace}
A \emph{PL space} is a second countable, Hausdorff topological space, $X$, endowed with a family, $\cT$, of triangulations of $X$, 
called admissible and such that
the following properties are satisfied.
\begin{enumerate}[(a)]
\item If $(K,h)$ in $\cT$ and $L\vartriangleleft K$, then $(L,h)\in\cT$.
\item If $(K,h)\in\cT$ and $(L,f)\in \cT$, they have  a common subdivision in $\cT$.
\end{enumerate}
\end{definition}

\begin{definition}\label{def:plmap}
Let $(X,\cT)$ and $(Y,\cS)$ be two PL spaces.
A \emph{PL map}, $\psi\colon (X,\cT)\to (Y,\cS)$, is a continuous map,
$\psi\colon X\to Y$, such that for any $(K,h)\in\cT$
and any $(L,f)\in\cS$, there is a subdivision $K'$ of $K$ for which $j^{-1}\psi h$ takes each simplex of $K'$ linearly \emph{into} a simplex of $L$.
A \emph{PL subspace} of $(X,\cT)$ is a PL space $(X',\cT')$, such that $X'$ is a subspace of $X$ and the inclusion map,
$X'\hookrightarrow X$, is a PL map.
\end{definition}

\begin{definition}\label{de:PLfilter}
A \emph{filtered PL space} is a PL space $(X,\cT)$, filtered by a sequence of closed PL subspaces,
$$
X=X_{n}\supsetneq X_{n-1}\supset \dots\supset X_{0}\supset X_{-1}=\emptyset.
$$
\end{definition}
From \cite[Subsection 2.5.2]{LibroGreg}, we may suppose that 
there is a triangulation $(K,h)$ of $X$ with respect to which each of the $X_i$ 
is the image under $h$ of a subcomplex of $K$.

\medskip
If $L$ is a full subcomplex of a simplicial complex $K$ and $K'$  is a subdivision of $K$, we denote $L'$ the subdivision of $L$
induced by $K'$. If $L$ is full in $K$, then (\cite[Lemma 3.3]{MR665919}) $L'$ is full in $K'$.
Therefore the fullness property  adapts to PL spaces: any filtered PL space admits an admissible full triangulation,
see \cite[Lemma 3.3.19]{LibroGreg}.

 \subsection{Blown-up and subdivision}
 Let us connect the blown-up cochain complex of a   simplicial complex and of  one of its subdivisions.
 
\begin{proposition}\label{prop:Ksub}
Let $K'$ be a subdivision of a full filtered   simplicial complex $K$ and $\ov{p}$ be a perversity.  
There exist  cochain maps
 $$j_{K'K}\colon \lau{\tN} *{\ov p}{K'}\to \lau{\tN} *{\ov p}K
\quad  \text{and} \quad
 \varphi_{KK'}\colon  \lau{\tN} *{\ov p}K\to  \lau{\tN} *{\ov p}{K'}$$
 such that $j_{K'K}\circ \varphi_{KK'}=\id$.
 \end{proposition}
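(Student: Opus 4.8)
\section*{Proof plan}

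The plan is to obtain $\varphi_{KK'}$ as the pullback along a filtered simplicial approximation to the identity and $j_{K'K}$ as the blown-up transpose of the subdivision operator, so that the desired splitting becomes the cochain dual of the classical chain identity $\varphi_*\circ\sd=\id$. Throughout I use that both $K$ and $K'$ are full, so that the simplicial blown-up complexes of \exemref{exam:simplicialyPL}(b) are defined. Following \remref{rema}, I would fix a well order on $\vert(K)$ and build $\varphi\colon K'\to K$ by sending each vertex $a$ of $K'$ to the maximal vertex $\tv_{\sigma_a}$ of its carrier simplex $\sigma_a\in K$ that lies in the stratum $S_{\sigma_a}$ containing the interior of $\sigma_a$. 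This $\varphi$ is simplicial and, by the stratum-respecting choice, filtration-preserving: it carries the canonical decomposition of a simplex of $K'$ into that of its image, factor by factor, and matches source strata so that $\ov p$ is preserved (cf. \secref{IFC} and \eqref{SE}).

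Because $\varphi$ is simplicial and respects the canonical decompositions factor by factor, it induces, via the cone functor $\tc(-)$ on each factor, a pullback from the blown-up complex of $\varphi(\tau)$ to that of $\tau$; letting $\tau$ range over $\simp{K'}$ and precomposing gives the cochain map $\varphi_{KK'}\colon\lau{\tN}*{\ov p}K\to\lau{\tN}*{\ov p}{K'}$. The point to verify is that $\varphi_{KK'}$ preserves $\ov p$-allowability. Since a filtration-preserving simplicial map can only collapse faces while keeping the $\varepsilon$-pattern, each pulled-back basis cochain satisfies $\|\varphi^*\1_{(F,\varepsilon)}\|_\ell\leq\|\1_{(F,\varepsilon)}\|_\ell$ along the matching strata, whence $\|\varphi_{KK'}\omega\|\leq\|\omega\|\leq\ov p$ and likewise for $\delta(\varphi_{KK'}\omega)=\varphi_{KK'}(\delta\omega)$.

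Next I would construct $j_{K'K}$ from the subdivision. Refining $K$ into $K'$ subdivides each simplex $\sigma$, and hence the prism $\tDelta=\tc\Delta_0\times\dots\times\tc\Delta_{n-1}\times\Delta_n$, compatibly with the product and cone structure, since each cone factor subdivides as the cone on its subdivided base. A cochain $\omega\in\lau{\tN}*{\ov p}{K'}$ then restricts to the pieces of this subdivided prism, and assembling these restrictions defines $(j_{K'K}\omega)_\sigma\in\tN^*(\Delta_\sigma)$; this is the blown-up transpose $\sd^*$ of the subdivision chain map. One checks that $j_{K'K}$ commutes with $\delta$ and does not increase perverse degree, so that it preserves $\ov p$-intersection cochains. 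I expect this compatibility---of the subdivision with the prism decomposition and with the perverse filtration---to be the main obstacle, since it is the one step where the blow-up and the subdivision genuinely interact rather than being transported along a simplicial map.

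Finally, the relation $j_{K'K}\circ\varphi_{KK'}=\id$ reduces to a local computation on a single $\sigma\in K$. Evaluating $(j_{K'K}\varphi_{KK'}\omega)_\sigma$ restricts $\varphi^*\omega$ to the subdivided prism of $\sigma$ and reassembles, which amounts to pairing $\omega$ with $\varphi_*\sd(\sigma)$. By the carrier condition every small simplex of $\sd(\sigma)$ is sent by $\varphi$ into $\sigma$; those mapped onto a proper face are degenerate and vanish, while the remaining dimension-preserving images reassemble to $\sigma$ with multiplicity one, so $\varphi_*\sd(\sigma)=\sigma$ in $\hiru C*K$. Transporting this through the blow-up gives $\sd^*\circ\varphi^*=\id$, that is $j_{K'K}\circ\varphi_{KK'}=\id$, as required.
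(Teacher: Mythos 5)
Your plan is, in all essentials, the paper's own proof: $\varphi_{KK'}$ is built from the Goresky--MacPherson map $\nu\colon K'\to K$ (greatest vertex of the carrier lying in its stratum, for a well order on $\vert(K)$), $j_{K'K}$ is the transpose of the subdivision operator, and the splitting $j_{K'K}\circ\varphi_{KK'}=\id$ holds because the simplices of $\sd\,\sigma$ collapsed by $\nu$ contribute nothing while the non-collapsed ones reproduce $\sigma$ with signed multiplicity one; for the latter the paper quotes the sharper fact (MacPherson--Vilonen, detailed by Friedman) that \emph{exactly one} simplex $\sigma\{m\}$ of the subdivision of $\sigma$ has non-degenerate image, which you should cite rather than assert. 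Your formulation of $\varphi_{KK'}$ as a factor-wise blown-up pullback is correct, and is in effect the right reading of the paper's formula $\1_{(\Delta_{\sigma},\varepsilon)}\mapsto\1_{(\Delta_{\sigma\{m\}},\varepsilon)}$: since $\nu$ preserves strata and canonical decompositions, the pullback of $\1_{(F,\varepsilon)}$ is a sum of generators with the same $\varepsilon$-pattern and the same factor dimensions, so perverse degrees are preserved, and the simplices of $K'$ collapsed by $\nu$ (which your write-up otherwise leaves implicit) are handled automatically.

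The genuine gap is in your construction of $j_{K'K}$. It is false that refining $K$ to $K'$ subdivides the prism $\tDelta_{\sigma}$ into the prisms $\tDelta_{\sigma\{\ell\}}$, compatibly with the product and cone structure: the canonical decomposition of a simplex $\sigma\{\ell\}$ of $K'$ does not respect the join factors of $\sigma$, so the pieces of the product-of-cones decomposition you invoke are not indexed by simplices of $K'$ at all. Concretely, take $\sigma=\langle v\rangle\ast\langle a,b\rangle$ with $K_{0}=\{v\}$ and $K'=\sd\,K$: the small triangle $\langle a,\widehat{va},\widehat{vab}\rangle$ misses $v$, hence is regular and its blow-up is the triangle itself, with no cone coordinate, and it is not a sub-prism of the square $\tc\langle v\rangle\times\langle a,b\rangle$ in any linear way; meanwhile the preimages under the blow-down $\pi\colon\tDelta_{\sigma}\to\Delta_{\sigma}$ of the two small triangles containing $v$ both contain the whole exceptional face $\{0\}\times\langle a,b\rangle$. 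Hence ``restricting $\omega\in\lau{\tN}*{\ov p}{K'}$ to the pieces of the subdivided prism and assembling'' is not a definition. The map you are after does exist, but it must be defined combinatorially, as the paper does: $j\bigl(\1_{(\Delta_{\sigma\{\ell\}},\varepsilon)}\bigr)=\1_{(\Delta_{\sigma},\varepsilon)}$ on generators attached to the dimension-preserving simplices of the subdivision (orientations chosen compatibly), and zero on generators attached to simplices of $K'$ whose carrier in $K$ has strictly larger dimension; allowability then follows from $\dim(\Delta_{\sigma\{\ell\}}\cap S)\leq\dim(\Delta_{\sigma}\cap S)$, which gives $\|\1_{(\Delta_{\sigma},\varepsilon)}\|\leq\|\1_{(\Delta_{\sigma\{\ell\}},\varepsilon)}\|$, and commutation with $\delta$ is a direct check on these generators, not a consequence of any geometric restriction. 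So the step you yourself flag as ``the main obstacle'' is real, and the prismatic-subdivision route you propose would not close it.
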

 
 \begin{proof}
Let $\sigma\colon \Delta_{\sigma}\to K$ be an oriented, regular, filtered simplex of $K$. The
filtration on $K$ induces a decomposition in join product,
$\Delta_{\sigma}=\Delta_{\sigma,0}\ast\dots\ast\Delta_{\sigma,n}$,
where each $\Delta_{\sigma,i}\menos \Delta_{\sigma,i-1}$ is included in a stratum $S_{i}$. 
The simplex $\sigma$ is subdivided in some oriented simplices 
 $(\sigma\{\ell\})_{1\leq \ell \leq k}$ of $K'$, of equal dimension. 
 Let $\ell\in\{1,\dots,k\}$.
 We can suppose that the orientations of $\sigma$ and $\sigma\{\ell\}$ are compatible.
 Each simplex of the subdivision is obtained by adding new vertices to a subset of $\vert(K)$.
 These new vertices belongs to a stratum and we can write  $\Delta_{\sigma\{\ell\}}$
 as a join product
 $\Delta_{\sigma\{\ell\}}=\Delta_{\sigma\{\ell\},0}\ast\dots\ast \Delta_{\sigma\{\ell\},n}$.

$\bullet$  \emph{Construction of $j_{K'K}$.}
For any $\varepsilon$ as in  \defref{43}
and any $\ell\in\{1,\dots,k\}$, we set
$$j\left( \1_{(\Delta_{\sigma\{\ell\}},\varepsilon)}\right)=
\1_{(\Delta_{\sigma},\varepsilon)}.$$
We get a cochain map $j\colon \Hiru {\tN} *{K'}\to \Hiru{ \tN} *K$. Concerning the perverse degrees, let us  notice that
$\dim( \Delta_{\sigma\{\ell\}} \cap S)\leq \dim (\Delta_{\sigma}\cap S)$.
Thus, we have $\|\1_{(\Delta_{\sigma},\varepsilon)}\|\leq \|\1_{(\Delta_{\sigma\{\ell\}},\varepsilon)}\|$
for any $\ell\in\{1,\dots,k\}$.
Defined on a basis, the association $j$ extends linearly in a  map
 $$j_{K'K}\colon \lau{\tN} *{\ov p}{K'}\to \lau{\tN} *{\ov p}K.$$
On each factor of the join product, this map is the transposed map of the subdivision map. 
From \cite[Lemmas 3.3.1, 3.3.15]{LibroGreg}, it follows that $j_{K'K}$ is a chain map.

 \medskip
$\bullet$  \emph{Construction of $\varphi_{KK'}$.}
 We use a simplicial map, 
 $\nu\colon K'\to K$,
built by Goresky and MacPherson in \cite[Appendix]{MR833195}.
Let us recall their construction, using  \cite[Subsection  3.3.4]{LibroGreg}, assuming that $K$ is full
and that the set  $\vert(K)$ is well ordered.
The map $\nu$ is defined from a map $\ov{\nu}\colon \vert (K') \to \vert (K)$ between the set of vertices of $K$ and $K'$,
that is  extended linearly on simplices.

If the vertex $v'\in K'$ is already in $K$, we set $\ov{\nu}(v')=v'$.
Otherwise, $v'$ is in the interior of a simplex $\sigma$ of $K$. 
Denote  $S$ the stratum of $|K|$ containing the interior of $\sigma$.
One knows (see \cite{MR833195} or \cite[Lemma 3.3.25]{LibroGreg}) that the interior of a simplex
is contained in the stratum $S$ if, and only if, all the vertices of $\sigma$ are contained in the closure $\ov{S}$
and at least one vertex of $\sigma$ is in $S$.
We define $\ov{\nu}(v')$ as the vertex of $\sigma$ in $S$ that is greatest in the selected order.
As the vertices $v'$ and $\ov{\nu}(v')$ are in the same stratum (see \cite[Proof of Lemma 3.3.21]{LibroGreg}),  the map $\ov{\nu}$ is compatible with the strata decomposition.
Also, from the same proof, it is explicit that \emph{only one} 
of the simplices $\Delta_{\sigma\{\ell\}}$ of the subdivision has an image by $\nu$ which is
of the same dimension than $\Delta_{\sigma}$. Let us denote it $\Delta_{\sigma\{\nu\}}$ and set
$$\varphi_{KK'}\left(\1_{(\Delta_{\sigma},\varepsilon)}\right)
=
 \1_{(\Delta_{\sigma\{\nu\}},\varepsilon)}.
$$
As $\nu\colon K'\to K$ is a simplicial map, compatible with the strata, we have
 $\|\1_{(\Delta_{\sigma},\varepsilon)}\|= \|\1_{(\Delta_{\sigma\{\nu\}},\varepsilon)}\|$ and a  map
 $$
  \varphi_{KK'}\colon  \lau{\tN} *{\ov p}K\to  \lau{\tN} *{\ov p}{K'}.
 $$
 As the association $\ov{\nu}$ gives a chain map (see \cite[Appendix]{MR833195} or \cite[Lemma 3.3.21]{LibroGreg}), 
 by duality, $\varphi_{KK'}$ is a chain map.
The equality $j_{K'K}\circ \varphi_{KK'}=\id$ follows directly from the definitions of the two maps.
 \end{proof}

\subsection{PL blown-up cohomology and simplicial cohomology}

 Let $(X,\cT)$ be a PL filtered space and $K$ be any triangulation of $\cT$. We denote  $\sd\; K$
 the barycentric subdivision of $K$ and  $(\sd^{i}\;K)_{i\in \N}$ the family of iterated barycentric subdivisions,
 with the convention $\sd^0=\id$.
 The subdivision maps give a direct system
 $\sub_{i}\colon \sd^{i+1}\; K\to \sd^i\; K$, for $i\in\N$.
Let $\ov{p}$ be a perversity on $X$.
In \cite[Lemma 5.4.1]{LibroGreg}, Friedman proves that the PL homology is obtained as the homology of the inductive limit
of this direct system, 
$\lau H {\ov{p}} {*,\PL}X \cong\hiru H {*}{\varinjlim_{i} \lau C {\ov{p}} * {\sd^i\;K}}$.

\medskip
Denote 
$j_{i}\colon \lau{\tN} *{\ov p}{\sd^{i}\; K}\to \lau{\tN} *{\ov p}{\sd^{i-1}\;K}$
and
$\varphi_{i}\colon \lau{\tN} *{\ov p}K\to \lau{\tN} *{\ov p}{\sd^i\;K}$
from the maps of \propref{prop:Ksub}, for $i\in\N$.
More specifically, we set $j_{i}=j_{\sd^iK\ \sd^{i-1}K}$ and, by induction,
$\varphi_{i}=\varphi_{\sd^{i-1}K\ \sd^iK}\circ \varphi_{i-1}$ and $\varphi_{0}=\id$. 
By construction and \propref{prop:Ksub}, it follows $j_{i}\circ \varphi_{i}=\varphi_{i-1}$. 
The maps $j_{i}$ define a projective system which allows the following definition.

 \begin{definition}\label{def:plcochain}
 Let $(X,\cT)$ be a PL filtered space and $\ov{p}$ be a perversity on $X$. The complex
 of  $\ov{p}$-intersection PL cochains is the  inverse (projective) limit,
 $$
 \lau \tN {*,\PL} {\ov{p}}X=\varprojlim_{i} \lau {\tN} *{\ov p}{\sd^i\;K}.
 $$
 We denote $\lau \IH {*,\PL} {\ov{p}} X $ the corresponding cohomology, and call it
 the \emph{$\PL$ blown-up $\ov{p}$-intersection  cohomology.}
 \end{definition}

 From \lemref{bary}, we can suppose that $K$ is  full.
 Let's now prove that the PL blown-up $\ov{p}$-intersection cohomology is isomorphic 
 to the blown-up  singular and the blown-up  simplicial ones, and thus is independent of the choice of $K$.
 
 \begin{theorem}\label{TeoremaPL}
 Let $(X,\cT)$ be a PL filtered space and $\ov{p}$ be a perversity on $X$. Then for a pure $K\in\cT$,
 there are isomorphisms,
 $$\lau \IH {*,\PL} {\ov{p}}X
 \cong 
 \lau \IH {*} {\ov{p}}K
 \cong 
\lau \IH {*} {\ov{p}}X
\text{ and }
\lau H {\ov{p}} {*,\PL}X
\cong
\lau H {\ov{p}} * K
\cong
\lau H {\ov{p}} * X.
 $$
 \end{theorem}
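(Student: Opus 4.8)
The plan is to assemble the three--way isomorphism out of the two comparison theorems already proved for a fixed triangulation, \thmref{DKBis} and \thmref{DKTris}, together with the subdivision machinery of \propref{prop:Ksub}. By \lemref{bary} I may assume the triangulation $K$ is full. The homeomorphism $h\colon |K|\to X$ is compatible with the filtrations, so it identifies the singular intersection (co)homology of $X$ with that of $|K|$, while the PL and simplicial invariants are computed directly from $K$ and its barycentric subdivisions through \defref{def:plcochain}. Under this identification the middle isomorphisms $\lau \IH * {\ov p} K \cong \lau \IH * {\ov p}{|K|}$ and $\lau H {\ov p} * K \cong \lau H {\ov p} * {|K|}$ are exactly \thmref{DKTris} and \thmref{DKBis}, so only the comparison with the PL theory remains.

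For the cohomological PL statement I would analyse the tower $(\lau \tN * {\ov p}{\sd^i K}, j_i)$ whose inverse limit computes $\lau \tN {*,\PL}{\ov p}X$. By \propref{prop:Ksub} each $j_i$ admits a section, hence is onto, so the tower is Mittag--Leffler and its $\varprojlim^1$ vanishes; the Milnor sequence then reduces the claim to showing that every $(j_i)_*$ is an isomorphism in cohomology. Since $|\sd^iK|=|\sd^{i-1}K|=|K|$, \thmref{DKTris} identifies both $\lau \IH * {\ov p}{\sd^i K}$ and $\lau \IH * {\ov p}{\sd^{i-1} K}$ with the single group $\lau \IH * {\ov p}{|K|}$ via the restriction maps $\rho$. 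The point to check is the chain--level compatibility $j_{K'K}\circ\rho_{K'}=\rho_K$; granting it, each $(j_i)_*$ fits into a commuting triangle between two copies of $\lau \IH * {\ov p}{|K|}$ and is therefore an isomorphism, whence $\lau \IH {*,\PL}{\ov p}X\cong\varprojlim_i\lau \IH * {\ov p}{\sd^i K}\cong \lau \IH * {\ov p}K$.

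For the homological PL statement I would start from Friedman's identification $\lau H {\ov p}{*,\PL}X\cong \Hiru H *{\varinjlim_i \lau C {\ov p} * {\sd^i K}}$ of \cite[Lemma 5.4.1]{LibroGreg}. As homology commutes with filtered colimits, the right-hand side is $\varinjlim_i \lau H {\ov p} * {\sd^i K}$. Each transition map is the simplicial subdivision operator, and it is an isomorphism: it is intertwined by the inclusions $\iota$ into the singular chains of $|K|$ with the singular subdivision operator, which is chain homotopic to the identity, while \thmref{DKBis} makes each $\iota_*$ an isomorphism onto the fixed group $\lau H {\ov p} * {|K|}$. Hence the colimit collapses to $\lau H {\ov p} * K$, completing the chain of isomorphisms.

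The step I expect to be the main obstacle is the compatibility of subdivision with the comparison maps. On the homology side this is comparatively soft, resting only on the naturality of $\iota$ and the classical chain homotopy for singular subdivision. On the cohomology side it is the genuine technical heart: one must unwind the blow--up construction to verify that $j_{K'K}$ intertwines $\rho_{K'}$ and $\rho_K$ (equivalently, that $\varphi_{KK'}$ is a quasi--isomorphism), using the explicit values of $j_{K'K}$ on the basis cochains $\1_{(\Delta_\sigma,\varepsilon)}$ and the estimate on perverse degrees under subdivision established in \propref{prop:Ksub}.
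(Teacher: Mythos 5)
Your proposal is correct and follows essentially the same route as the paper: the middle isomorphisms come from \thmref{DKBis} and \thmref{DKTris}, the tower $(\lau{\tN}*{\ov p}{\sd^i\,K},j_i)$ is handled by using the sections of \propref{prop:Ksub} to get surjectivity of each $j_i$ and each $\hiru H{*}{j_i}$, a $\varprojlim$--$\varprojlim^1$ argument (the paper cites Grothendieck's EGA III, Proposition 13.2.3, which is exactly your Milnor sequence) identifies $\lau \IH {*,\PL}{\ov p}X$ with $\varprojlim_i \lau \IH {*}{\ov p}{\sd^i\,K}$, this limit is then identified with $\lau \IH {*}{\ov p}K \cong \lau \IH {*}{\ov p}X$ via \thmref{DKTris}, and the homological half is dispatched by \thmref{DKBis} together with the commutation of homology with direct limits. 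The compatibility $j_{K'K}\circ\rho_{K'}=\rho_K$ that you single out as the main outstanding verification is likewise not carried out in the paper --- its substitute is the relation $j_i\circ\varphi_i=\varphi_{i-1}$ and the resulting map $\Psi$, after which the identification $\varprojlim_i\lau \IH {*}{\ov p}{\sd^i\,K}\cong\lau \IH {*}{\ov p}K$ is asserted at the same level of implicitness --- so your proposal matches the published argument both in structure and in degree of completeness.
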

 
 The last part recovers \cite[Theorem 5.4.2]{LibroGreg} without the hypothesis of CS set structure.

 \begin{proof}
 Let $j_{\leq i}$ be the map defined by $j_{\leq 1}=j_{1}$
 and
 $j_{\leq i}=j_{\leq i-1}\circ j_{i}$.
 We also denote $p_{i}\colon  \lau \tN {*,\PL}{\ov{p}}X\to   \lau {\tN}* {\ov p} { \sd^{i}(K)}$
 the projection given by the projective limit.
  \begin{equation}\label{equ:inversePLtodo}
 \xymatrix{
 &&
 \lau{\tN} *{\ov p}K\\
 \lau \tN{*,\PL} {\ov{p}}X
 \ar[rru]^-{p_{0}}
 \ar[rr]_-{p_{i-1}}
 \ar[rrd]_-{p_{i}}
 &&\lau {\tN} *{\ov p}{sd^{i-1}\;K} \ar[u]^-{j_{\leq i-1}}
 &&
 \lau{\tN} *{\ov p}K
 \ar@{=}[llu]_-{\id}
 \ar[ll]^-{\varphi_{i-1}}
 \ar[lld]^-{\varphi_{i}}
 \ar@/^6pc/@{-->}[llll]_-{\Psi}
 \\
 &&
\lau{\tN} *{\ov p}{\sd^i\;K}
\ar[u]^-{j_{i}}
 }
 \end{equation}
 From the equalities $j_{i}\circ \varphi_{i}=\varphi_{i-1}$ and
 the universal property of inverse limits, we get a cochain map $
 \Psi\colon \lau{\tN} *{\ov p}K\to  \lau \tN {*,\PL} {\ov{p}}X$
 such that 
 $p_{0}\circ \Psi=\id$
 and
 $p_{i}\circ\Psi= \varphi_{i}$
 for any $i\in\N$.
 From \propref{prop:Ksub} applied to $\sd^i \;K$ and $\sd^{i-1}\;K$, we deduce that each $j_{i}$ and each $\hiru H{*}{j_{i}}$
 is surjective.
 Thus \cite[Proposition 13.2.3]{MR217085} and \thmref{DKTris} give the existence of isomorphisms,
$$
\lau \IH {*,\PL} {\ov{p}}X= \hiru H{*}{\varprojlim_{i} \lau{\tN} *{\ov p}{\sd^i\;K}}
\cong \varprojlim_{i}  \lau \IH *{\ov{p}}{\sd^i\;K}\cong  \lau \IH * {\ov{p}}K
\cong  \lau \IH * {\ov{p}}X.
$$
The existence of the isomorphisms in $\ov{p}$-intersection homology follows directly from \thmref{DKBis} and the
commutativity of homology with inductive limits.
 \end{proof}



\begin{thebibliography}{10}

\bibitem{MR1346255}
B.~Cenkl, G.~Hector, and M.~Saralegi.
\newblock Cohomologie d'intersection mod\'{e}r\'{e}e. {U}n th\'{e}or\`eme de de
  {R}ham.
\newblock {\em Pacific J. Math.}, 169(2):235--289, 1995.

\bibitem{CST2}
D.~Chataur, M.~Saralegi-Aranguren, and D.~Tanr{\'e}.
\newblock Steenrod squares on intersection cohomology and a conjecture of {M}
  {G}oresky and {W} {P}ardon.
\newblock {\em Algebr. Geom. Topol.}, 16(4):1851--1904, 2016.

\bibitem{CST5}
D.~Chataur, M.~Saralegi-Aranguren, and D.~Tanr{\'e}.
\newblock Blown-up intersection cohomology.
\newblock In {\em An {A}lpine {B}ouquet of {A}lgebraic {T}opology}, volume 708
  of {\em Contemp. Math.}, pages 45--102. Amer. Math. Soc., Providence, RI,
  2018.

\bibitem{CST1}
D.~Chataur, M.~Saralegi-Aranguren, and D.~Tanr{\'e}.
\newblock Intersection cohomology, simplicial blow-up and rational homotopy.
\newblock {\em Mem. Amer. Math. Soc.}, 254(1214):viii+108, 2018.

\bibitem{CST7}
D.~Chataur, M.~Saralegi-Aranguren, and D.~Tanr{\'e}.
\newblock Poincar\'e duality with cap products in intersection homology.
\newblock {\em Adv. Math.}, 326:314--351, 2018.

\bibitem{CST3}
D.~Chataur, M.~Saralegi-Aranguren, and D.~Tanr{\'e}.
\newblock {Intersection homology. General perversities and topological
  invariance}.
\newblock {\em Illinois Journal of Mathematics}, 63(1):127--163, 2019.

\bibitem{CST8}
D.~Chataur, M.~Saralegi-Aranguren, and D.~Tanr{\'e}.
\newblock Blown-up intersection cochains and {D}eligne's sheaves.
\newblock {\em Geom. Dedicata}, 204:315--337, 2020.

\bibitem{2020arXiv200504960C}
D.~{Chataur} and D.~{Tanr{\'e}}.
\newblock {Natural operations in Intersection Cohomology}.
\newblock {\em arXiv e-prints}, page arXiv:2005.04960, May 2020.

\bibitem{LibroGreg}
G.~Friedman.
\newblock {\em Singular Intersection Homology}.
\newblock New Mathematical Monographs. Cambridge University Press, 2020.

\bibitem{MR3046315}
G.~Friedman and J.~E. McClure.
\newblock Cup and cap products in intersection (co)homology.
\newblock {\em Adv. Math.}, 240:383--426, 2013.

\bibitem{MR572580}
M.~Goresky and R.~MacPherson.
\newblock Intersection homology theory.
\newblock {\em Topology}, 19(2):135--162, 1980.

\bibitem{MR699009}
M.~Goresky and P.~Siegel.
\newblock Linking pairings on singular spaces.
\newblock {\em Comment. Math. Helv.}, 58(1):96--110, 1983.

\bibitem{MR217085}
A.~Grothendieck.
\newblock \'{E}l\'{e}ments de g\'{e}om\'{e}trie alg\'{e}brique. {III}.
  \'{E}tude cohomologique des faisceaux coh\'{e}rents. {I}.
\newblock {\em Inst. Hautes \'{E}tudes Sci. Publ. Math.}, (11):167, 1961.

\bibitem{MR1867354}
A.~Hatcher.
\newblock {\em Algebraic topology}.
\newblock Cambridge University Press, Cambridge, 2002.

\bibitem{MR800845}
H.~C. King.
\newblock Topological invariance of intersection homology without sheaves.
\newblock {\em Topology Appl.}, 20(2):149--160, 1985.

\bibitem{MR833195}
R.~MacPherson and K.~Vilonen.
\newblock Elementary construction of perverse sheaves.
\newblock {\em Invent. Math.}, 84(2):403--435, 1986.

\bibitem{MR665919}
C.~Rourke and B.~Sanderson.
\newblock {\em Introduction to piecewise-linear topology}.
\newblock Springer Study Edition. Springer-Verlag, Berlin-New York, 1982.
\newblock Reprint.

\bibitem{MR300281}
C.~P. Rourke and B.~J. Sanderson.
\newblock {$\Delta$}-sets. {I}. {H}omotopy theory.
\newblock {\em Quart. J. Math. Oxford Ser. (2)}, 22:321--338, 1971.

\bibitem{ST1}
M.~Saralegi-Aranguren and D.~Tanr\'{e}.
\newblock Poincar\'{e} duality, cap product and {B}orel-{M}oore intersection
  homology.
\newblock {\em Q. J. Math.}, 71(3):943--958, 2020.

\end{thebibliography}
\end{document}